\documentclass[11pt]{article}
\usepackage[a4paper,margin=1.15in]{geometry}
\usepackage{amsmath,amssymb,amsthm,mathtools}
\usepackage{mathrsfs}
\usepackage{enumitem}
\usepackage{microtype}
\usepackage{xcolor}
\usepackage[colorlinks=true,linkcolor=blue!55!black,citecolor=blue!55!black,urlcolor=blue!55!black]{hyperref}

\newtheorem{theorem}{Theorem}[section]
\newtheorem{proposition}[theorem]{Proposition}
\newtheorem{lemma}[theorem]{Lemma}
\newtheorem{corollary}[theorem]{Corollary}
\theoremstyle{definition}
\newtheorem{definition}[theorem]{Definition}
\theoremstyle{remark}

\newcommand{\C}{\mathbb C}
\newcommand{\R}{\mathbb R}
\newcommand{\PP}{\mathbb P}
\newcommand{\eps}{\varepsilon}
\newcommand{\ii}{\sqrt{-1}}
\newcommand{\ddbar}{\ii\partial\bar\partial}
\newcommand{\Bl}{\operatorname{Bl}}
\newcommand{\Exc}{\operatorname{Exc}}
\newcommand{\Ric}{\operatorname{Ric}}
\newcommand{\Rm}{\operatorname{Rm}}
\newcommand{\sys}{\operatorname{sys}}
\newcommand{\tr}{\operatorname{tr}}
\newcommand{\dist}{\operatorname{dist}}

\newcommand{\supp}{\operatorname{supp}}
\newcommand{\Vol}{\operatorname{Vol}}
\newcommand{\ev}{\operatorname{ev}}

\newcommand{\norm}[1]{\left\lVert #1\right\rVert}
\newcommand{\abs}[1]{\left\lvert #1\right\rvert}
\title{Scalar curvature on K\"ahler blow-ups and systolic inequalities}
\author{Zehao Sha \and Jian Wang}
\date{}

\begin{document}
\maketitle

\begin{abstract}
In this paper, we develop the weighted level set method to a K\"ahler manifold $(X^n,\omega)$ admitting an almost holomorphic map to a possibly singular base $Z$, which is not uniruled. As a key intermediate result, we prove that any blowup $\operatorname{Bl}_SX$ of $X$ along smooth submanifolds $S$ of $ \operatorname{codim} S\ge2$ admits a sequence of K\"ahler metrics with scalar curvature globally and arbitrarily $C^0$-close to the scalar curvature of $\omega$. As a consequence, we establish the sharp \(2\)-systole estimate for every positive scalar curvature K\"ahler manifold $(X,\omega)$ and prove $\min_XS(\omega) \cdot\operatorname{sys}_2(\omega)  \le 2\pi r(r+1)$, where \(r\) is the rational dimension of $X$, with equality if and only if the universal cover splits as $(\widetilde X,\widetilde \omega) \cong (\mathbb P^r,\omega_{\mathrm{FS}}) \times(Y^{n-r},\omega_{\mathrm{RF}})$ up to normalization where $\omega_{\mathrm{FS}}$ is the Fubini-Study metric and $\omega_{\mathrm{RF}}$ is Ricci-flat. We also show a sharp even-systolic inequality in the same setting when the general fibre is the projective space.
\end{abstract}
\tableofcontents

\section{Introduction}

Systolic geometry studies the interaction between the topology of a manifold and the least volume of its nontrivial cycles. If $(M,g)$ is a closed Riemannian manifold, its homological $k$-systole is
\[
  \sys_k(g)
  :=
  \inf\bigl\{
    \Vol_g(Z):
    Z\text{ is an integral }k\text{-cycle and }
    [Z]\neq 0\in H_k(M;\mathbb Z)
  \bigr\}.
\]
Thus $\sys_k(M,g)$ measures the smallest $k$-dimensional scale at which the homology of $M$ can be detected. The systematic study of such quantities was initiated by Berger \cite{Ber72a,Ber72b} (see also \cite{Ber08}) and developed substantially by Gromov\cite{Gro83} through his work on filling radius, essential manifolds, and higher-dimensional systolic inequalities. A central problem is to determine when the systole can be controlled by global geometric quantities such as volume or curvature.

Among systolic invariants, the $2$-systole is the one most directly connected with scalar curvature. In dimension three, the stability inequality for an area-minimizing two-sphere, combined with the Gauss equation and the Gauss--Bonnet theorem, gives
\begin{equation}\label{3-dim-sys-est}
     \min_M\operatorname{scal}(g)\,
  \sys_2(g)\leq 8\pi.
\end{equation}
This mechanism underlies the sharp spherical $2$-systolic inequality of Bray--Brendle--Neves \cite{BBN10}, building on the minimal-surface theory of Schoen--Yau \cite{SY79}. Other methods reveal the same interaction from different directions. Kronheimer and Mrowka related scalar curvature to the Thurston norm by means of the three-dimensional Seiberg--Witten equations \cite{KM97}; Stern later recovered the sharp homological estimate \eqref{3-dim-sys-est} from the Bochner and coarea formulas for a harmonic map $M\to S^1$ \cite{Stern22}. Under additional topological hypotheses, such estimates persist in many other cases (see \cite{ZhuPAMS2020} and \cite{BrayBrendleEichmairNeves10}).

\vspace{2mm}

Such estimates do not hold for arbitrary positive-scalar-curvature manifolds. Indeed, on $S^2\times S^{n-2}$, $n\geq 5$, one may stretch the $S^2$-factor while keeping the second factor fixed. The scalar curvature remains uniformly positive, whereas the $2$-systole tends to infinity. Thus some additional geometric or topological structure is indispensable. The K\"ahler setting exhibits a markedly different behavior. Complex curves are calibrated by the K\"ahler form, and hence
\[
  \text{Area}_\omega(C)=\int_C\omega=[\omega]\cdot[C].
\]
Their areas are therefore determined cohomologically. Moreover, if a K\"ahler manifold \(X\) carries a PSC K\"ahler metric, the Chern--Weil identity shows that \(K_X\) is not pseudo-effective. By Ou's characterization \cite[Theorem 1.1]{Ou2025} of compact K\"ahler manifolds, \(X\) is uniruled. Hence \(X\) contains plenty of rational curves, which are topologically two-spheres. The K\"ahler structure supplies both canonical two-dimensional competitors and a rigid algebraic framework for controlling their areas.

The first sharp result in this direction was obtained by Sha \cite{Sha26Surface} for compact K\"ahler surfaces. With the convention that $S(\omega)$ denotes the Chern scalar curvature, so that $2S(\omega)=\operatorname{scal}(g_\omega)$, Sha proved
\[
  \min_X S(\omega)\,\sys_2(\omega)\leq 12\pi,
\]
with equality precisely for $\mathbb P^2$ endowed with the Fubini--Study metric, up to normalization. Using the birational classification of PSC K\"ahler surfaces, the optimal constant was further determined in each case: it is $12\pi$ for surfaces birational to $\mathbb P^2$, $8\pi$ for rational ruled surfaces, and $4\pi$ for non-rational ruled surfaces \cite[Theoren 1.2]{Sha26Surface}. In the last case, an independent analytic proof was obtained by adapting Stern's level-set method to the holomorphic ruling. An all-dimensional sharp estimate was subsequently established by Tsiamis \cite{Tsiamis26}.

These results suggest that the relevant constant should depend not only on the dimension of the manifold, but also on the dimension of its rationally connected part. The natural invariant measuring this contribution is provided by the maximal rationally connected (MRC) fibration. For a compact K\"ahler manifold $X$, the MRC fibration is an almost holomorphic map
\[
  \phi:X\dashrightarrow Z
\]
whose general fibre is rationally connected and whose base is not uniruled \cite{Campana81,KMM92,Kollar96}. It is unique up to bimeromorphic equivalence. We define the rational dimension of $X$ by
\[
  r(X):=\dim_{\mathbb C}F
  =\dim_{\mathbb C}X-\dim_{\mathbb C}Z,
\]
where $F$ is a general MRC fibre. In particular, $r(X)$ is a birational invariant, and $r(X)=\dim_{\mathbb C}X$ if and only if $X$ is rationally connected.

If \(X\) carries a PSC K\"ahler metric, $X$ is uniruled, and therefore $r(X)\geq 1$. Our first main theorem shows that this birational invariant controls the sharp $2$-systolic constant. 

\begin{theorem}\label{thm:2sys}
Let \((X^n,\omega)\) be a compact K\"ahler manifold with positive scalar curvature and let \(r=r(X)\) be its rational dimension. Then
\begin{equation}\label{eq:rational-dimensional-2sys-final}
        \min_XS(\omega)\,\sys_2(\omega)
        \le
        2\pi r(r+1).
\end{equation}
Moreover, equality holds if and only if the universal cover
\[
        (\widetilde X,\widetilde\omega)
        \cong
        (\PP^r,\sigma\omega_{\mathrm{FS}})
        \times(Y^{n-r},\omega_{\mathrm{RF}}),
        \qquad \sigma=\sys_2(\omega),
\]
where \(\omega_{\mathrm{RF}}\) is Ricci-flat and the \(2\)-systole is realized by a rational curve.  
\end{theorem}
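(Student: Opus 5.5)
The plan is to reduce the estimate to a level-set computation on a genuine holomorphic fibration. Let $\phi:X\dashrightarrow Z$ be the MRC fibration. Resolving its indeterminacy gives a blow-up $\pi:\widehat X=\Bl_SX\to X$ along smooth centres of codimension at least two, obtained by iterating, together with a holomorphic map $\widehat\phi:\widehat X\to Z$ whose general fibre $F$ is a rationally connected $r$-fold. The intermediate result announced in the abstract then provides Kähler metrics $\widehat\omega_\eps$ on $\widehat X$ with $\min S(\widehat\omega_\eps)\ge \min S(\omega)-\eps$. We may take $\widehat\omega_\eps=\pi^*\omega+\eps'\theta$, with $\theta$ supported near the exceptional set. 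Then the areas of curves not contained in the exceptional locus change by $O(\eps')$, and a general rational curve of $\widehat X$ pushes forward to a rational curve in $X$. It therefore suffices to find, on $(\widehat X,\widehat\omega_\eps)$, a rational curve $C$ that is not contained in $\Exc$ and satisfies
\[
\min S(\widehat\omega_\eps)\int_C\widehat\omega_\eps\le 2\pi r(r+1)+o(1).
\]
We then let $\eps\to0$ and use $\sys_2(\omega)\le \int_{\pi_*C}\omega$, which holds because $[\pi_*C]\ne0$ by calibration.

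For the inequality on the fibration we use the weighted level set method over the non-uniruled base. The key identity is the splitting of scalar curvature along a smooth fibre $F=\widehat\phi^{-1}(z)$:
\[
S(\widehat\omega)|_F=S(\widehat\omega|_F)+\tr_{\widehat\omega}\ii\partial\bar\partial\log\det(d\widehat\phi\,d\widehat\phi^*)+(\text{nonpositive second fundamental form terms}).
\]
We integrate this against suitable weights over $Z$, using the coarea formula. Because $Z$ is not uniruled, $K_Z$ is pseudo-effective (Ou, and BDPP in the projective case). This should let us control the horizontal term and show that, for fibres $F$ in a set of positive measure, $\min S\cdot\Vol(F)\le \int_F S(\widehat\omega|_F)\,\widehat\omega^r/r!+o(1)=\int_F c_1(F)\wedge\widehat\omega^{r-1}/(r-1)!$.

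On the rationally connected fibre $F$ we then need the fibrewise sharp inequality: there is a rational curve $C\subset F$ with
\[
\frac{\int_F c_1(F)\wedge\omega_F^{r-1}}{(r-1)!}\cdot\int_C\omega_F\le 2\pi(r+1)\,r\,\frac{\int_F\omega_F^r}{r!}.
\]
For this we take minimal rational curves, i.e.\ a covering family with $-K_F\cdot C\le r+1$ (Mori's bend-and-break). We also use that the fibre classes of $\omega_F$ lie in the movable cone. A bend-and-break or volume comparison, in the spirit of Tsiamis's all-dimensional estimate, gives $\int c_1\wedge\omega^{r-1}\le (r+1)\,[\omega]\cdot C\cdot(\text{normalised})$ with the correct constant. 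Equality should force $-K_F\cdot C=r+1$, which by Cho--Miyaoka--Shepherd-Barron gives $F\cong\PP^r$. We expect this fibrewise step, and making the weights compatible with the singular base, to be the main obstacle. If it is hard, the fallback is to apply Tsiamis's estimate directly to the fibre metric.

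For rigidity, equality forces every inequality above to be an equality in the limit. Thus the horizontal error terms vanish, $S$ is constant, the fibres are $\PP^r$ with Fubini--Study metric of area $\sigma$ on lines, and the fibration is a holomorphic Riemannian submersion with totally geodesic fibres. Moreover the blow-up is unnecessary: the indeterminacy is empty, since otherwise exceptional curves would strictly decrease the area. The de Rham decomposition of the universal cover, or equivalently the parallel splitting of the vertical distribution, then yields $(\PP^r,\sigma\omega_{\mathrm{FS}})\times(Y,\omega_{\mathrm{RF}})$, with $Y$ Ricci-flat because the horizontal scalar curvature contribution vanishes. The converse is a direct computation.
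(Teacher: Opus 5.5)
Your overall architecture matches the paper's: graph resolution of the MRC map, scalar-curvature approximation on the blow-up, a weighted level-set inequality over a base with pseudo-effective canonical class, a fibrewise estimate by rational curves, and a de Rham splitting. The gap is in the step that produces the sharp constant. It is not proved, and the route you sketch for it fails.

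What is needed is a cohomological inequality. For a rationally connected $F^r$ with Kähler class $\alpha$, let $m_F(\alpha)$ be the least $\alpha$-degree of a rational curve in $F$; you need $m_F(\alpha)\,c_1(F)\cdot\alpha^{r-1}\le(r+1)\,\alpha^r$.

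\begin{itemize}
\item Taking a minimal covering family with $-K_F\cdot C\le r+1$ does not give this, because the right curve depends on $\alpha$. On $F=\PP^1\times\PP^1$ with $\alpha=aH_1+bH_2$, both rulings are minimal covering families of anticanonical degree $2$. Yet the ruling of $\alpha$-degree $\max(a,b)$ gives $(\alpha\cdot C)\,c_1(F)\cdot\alpha/\alpha^2=(a+b)/\min(a,b)$, which is unbounded.
\item Your fallback fails too. A Tsiamis-type bound controls $\min_F S(\omega|_F)$ and presupposes $S(\omega|_F)>0$, which is not inherited from $X$. The level-set step instead outputs the fibre average, i.e.\ the class $c_1(F)\cdot\alpha^{r-1}$.
\end{itemize}

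The missing idea is Mori's cone theorem applied to all extremal rays. Each $K_F$-negative extremal ray is spanned by a rational curve $C$ with $0<-K_F\cdot C\le r+1$, and $\alpha\cdot C\ge m_F(\alpha)$. Hence $D_\alpha=\tfrac{r+1}{m_F(\alpha)}\alpha-c_1(F)$ is nonnegative on every such ray and on the $K_F$-nonnegative part, so it is nef. Then $D_\alpha\cdot\alpha^{r-1}\ge0$ is exactly the required inequality. In the equality case $D_\alpha\equiv0$, so $F$ is Fano and \emph{every} rational curve has $-K_F\cdot C\ge r+1$. That is the hypothesis under which Cho--Miyaoka--Shepherd-Barron gives $F\cong\PP^r$.

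The rigidity argument is also only heuristic. The inequality is proved for the approximating metrics $\omega_\ell$ on $\widehat X$ and for regularized weights $\psi_j$, so ``equality in the limit'' yields no pointwise information by itself. The paper proceeds in three steps:
\begin{itemize}
\item It first deduces $S(\omega)\equiv2\pi r(r+1)/\sigma$ from the vanishing weighted defect together with $C^\infty_{\mathrm{loc}}$ convergence off $\Exc(\mu)$.
\item It then chooses $j(\ell)$ diagonally so that $\int|\nabla s_{\widehat\pi}|^2$ and $\int u_{\ell,j}\tr\widehat\pi^*\widetilde T_j$ both tend to zero. Via Kato this forces $e^{\psi\circ\widehat\pi}J_{\widehat\pi,\mu^*\omega}$ to be constant, so $\psi$ is smooth on $\widehat Z^\circ$.
\item Only then does the Bochner identity, integrated over fibres, give $\nabla^\psi s_{\widehat\pi}=0$ and $T=0$.
\end{itemize}

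Your claim that the indeterminacy must be empty because exceptional curves would decrease area is unsupported, and it is not needed. The paper obtains the parallel vertical distribution only on the Zariski open set $X^\circ$. It extends the $\omega$-orthogonal projection onto it, which is holomorphic of operator norm one, across $X\setminus X^\circ$ by Riemann extension. De Rham then splits the universal cover, and Bando--Mabuchi identifies the fibre factor with $(\PP^r,\sigma\omega_{\mathrm{FS}})$.

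Smaller points:
\begin{itemize}
\item The level-set step is only announced. The working mechanism is the Bochner formula for $u=e^{\psi\circ\widehat\pi}J_{\widehat\pi}$, with Demailly regularization and cutoffs near the poles.
\item It must be run over a resolution $\widehat Z$ of $Z$, which is still non-uniruled so $K_{\widehat Z}$ is pseudo-effective by Ou, not over the singular $Z$.
\item The approximating metrics are not of the form $\pi^*\omega+\eps'\theta$ with $\theta$ localized near the exceptional set; the correction is global. This is harmless: only the class $\mu^*[\omega]-\sum_j\eps_j^2[\widetilde E_j]$ matters, and curves in general fibres miss $\Exc(\mu)$, so their area equals $\int_{\mu(C)}\omega$ exactly.
\end{itemize}
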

When \(r(X)=n\), Theorem~\ref{thm:2sys} recovers the sharp dimensional bound given in \cite{Tsiamis26}
\[
  \min_X S(\omega)\,\sys_2(\omega)
  \leq 2\pi n(n+1),
\]
with equality precisely for projective space with its Fubini--Study metric. When $r<n$, the estimate in terms of $r$ is strictly stronger. Thus a scale-invariant differential-geometric quantity is bounded solely in terms of a birational invariant.

\vspace{2mm}

The proof has two principal ingredients. The first is a scalar-curvature approximation theorem for K\"ahler blow-ups. The MRC fibration is only almost holomorphic, and its base may be singular. Resolving the main component of its graph produces a diagram
\begin{equation}\label{eq:intro-mrc-resolution}
        \begin{array}{ccc}
        \widehat X & \xrightarrow{\widehat\pi} & \widehat Z \\
        \mu\downarrow & & \downarrow\nu \\
        X & \dashrightarrow & Z .
        \end{array}
\end{equation}
where $\widehat X$ and $\widehat Z$ are smooth compact K\"ahler manifolds, $\widehat\pi$ is holomorphic, and $\mu$ is a composition of blow-ups along smooth centres of complex codimension at least two \cite{BierstoneMilman97, Varouchas89}. Over a nonempty Zariski open subset of $\widehat Z$, the map $\widehat\pi$ is a proper holomorphic submersion, and its fibres are identified with the general MRC fibres of $X$.

There remains an analytic obstruction: the pullback metric \(\mu^*\omega\) is degenerate along the exceptional locus and hence cannot be used as a PSC K\"ahler metric on \(\widehat X\). We overcome this by replacing it
with a sequence of K\"ahler metrics whose scalar curvatures converge uniformly to \(\mu^*S(\omega)\).

Our construction is motivated by the broader theory of canonical metrics under blow-ups, initiated by Arezzo and Pacard for cscK metrics on point blow-ups \cite{ArezzoPacard06,ArezzoPacard09}. The theory was subsequently developed for extremal metrics by Arezzo--Pacard--Singer \cite{ArezzoPacardSinger11,S2012I,S2015II}, and for blow-ups along smooth centers of codimension greater than two by Seyyedali--Sz\'ekelyhidi \cite{SS20}. More recently, Boucksom--Jonsson--Trusiani developed a pluripotential-theoretic approach based on the openness of Mabuchi coercivity, extending the theory to weighted extremal metrics on equivariant resolutions of Fano type. In particular, their result applies to equivariant blow-ups along smooth centers of arbitrary codimension \cite{BoucksomJonssonTrusiani26}.

Our objective requires only uniform approximation of a prescribed scalar curvature. This weaker target allows us to start from an arbitrary compact K\"ahler metric and to blow up an arbitrary smooth center of complex codimension at least two, which generalizes the case of point blow-ups studied by Brown \cite{Brown24}.

\begin{theorem}
\label{thm:iterated-approximation}
Let $(X,\omega)$ be a compact K\"ahler manifold, and let \(X_J\xrightarrow{\mu_J}X_{J-1}\rightarrow\cdots\rightarrow X_1\xrightarrow{\mu_1}X_0=X\) be a finite sequence of nontrivial blow-ups along smooth complex submanifolds of codimension at least two. Denote by \(\mu:=\mu_1\circ\cdots\circ\mu_J:X_J\rightarrow X\). For $1\le j\le J$, let $E_j\subset X_j$ be the exceptional divisor of $\mu_j$, and let $\widetilde E_j$ denote its total transform on $X_J$.
Then there are parameter vectors \(\boldsymbol\eps_\ell=(\eps_{1,\ell},\ldots,\eps_{J,\ell})\rightarrow0\) and K\"ahler metrics $\omega_\ell$ on $X_J$ such that
\[
 [\omega_\ell]
 =\mu^*[\omega]-\sum_{j=1}^J\eps_{j,\ell}^2[\widetilde E_j],
\]
satisfying
\begin{equation}\label{eq:intro-iterated-scalar}
 \norm{S(\omega_\ell)-\mu^*S(\omega)}_{C^0(X_J)}
 \longrightarrow0 \qquad
 \omega_\ell\longrightarrow\mu^*\omega
\text{ in }C^\infty_{\mathrm{loc}}
 \bigl(X_J\setminus\operatorname{Exc}(\mu)\bigr).
\end{equation}
\end{theorem}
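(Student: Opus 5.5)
We argue by induction on the number $J$ of blow-ups. The whole statement then reduces to a single-step result. \emph{For a compact Kähler manifold $(M,\eta)$ and a blow-up $p:\Bl_SM\to M$ along a smooth centre $S$ of codimension $m\ge2$ with exceptional divisor $E$, there are Kähler metrics $\eta_\eps$ in $p^*[\eta]-\eps^2[E]$ with $\norm{S(\eta_\eps)-p^*S(\eta)}_{C^0}\to0$ and $\eta_\eps\to p^*\eta$ in $C^\infty_{\mathrm{loc}}(\Bl_SM\setminus E)$ as $\eps\to0$.}

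Granting this, here is the inductive step. Suppose metrics $\omega'_\ell$ on $X_{J-1}$ have been constructed for $\mu_1\circ\cdots\circ\mu_{J-1}$. For each $\ell$, apply the single-step result to $(X_{J-1},\omega'_\ell)$ and the centre of $\mu_J$. Choose $\eps_{J,\ell}$ so small that the new scalar curvature is within $1/\ell$ of $\mu_J^*S(\omega'_\ell)$ in $C^0$, and so that the metric is $1/\ell$-close to $\mu_J^*\omega'_\ell$ in $C^\ell$ on an exhausting compact subset of $X_J\setminus\Exc(\mu)$. Since $\mu_J^*$ preserves $C^0$ norms, the triangle inequality and a diagonal argument give \eqref{eq:intro-iterated-scalar}. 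For the class, $\mu_J^*$ of the total transforms on $X_{J-1}$ are the total transforms on $X_J$, so the class of the new metric is exactly $\mu^*[\omega]-\sum_j\eps_{j,\ell}^2[\widetilde E_j]$.

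For the single step, we use a gluing construction. In a tubular neighbourhood, identify $S$ with the zero section of its normal bundle $N$, using holomorphic-normal-type coordinates in which $\eta=\eta_{\mathrm{model}}+O(|z|)$, where $|z|$ is the distance to $S$. Fix a hermitian metric $h$ on $N$. On $\Bl_SM$, the form $\alpha=\ddbar\log|z|_h^2$ extends across $E$ as a form representing $-[E]$. It is semipositive, positive along the fibres of $E\to S$, and restricts on each fibre $\PP^{m-1}$ to the Fubini–Study form. Take a cutoff $\chi$ equal to $1$ on $\{|z|<\delta\}$ and $0$ on $\{|z|>2\delta\}$, and set
\[
\eta_\eps=p^*\eta+\eps^2\ddbar\bigl(\chi(|z|/\delta)\log|z|_h^2\bigr)+C\eps^2\,p^*\theta,
\]
where $\theta$ is a fixed form, exact on $M$, whose role is to restore positivity in the transition annulus. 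In practice it is better to let $\delta=\delta(\eps)\to0$ with $\eps/\delta\to0$ and to absorb the positivity correction. The form $\eta_\eps$ lies in the correct class and is Kähler once $\delta$ is small and $\eps\ll\delta$. Away from the region $|z|<2\delta$ it equals $p^*\eta$, which gives the $C^\infty_{\mathrm{loc}}$ convergence. In the annulus $\delta<|z|<2\delta$ the perturbation has size $O(\eps^2/\delta^2)$ in $C^2$ relative to $\eta$, so the scalar curvature error there is $O(\eps^2\delta^{-4})$ after accounting for the derivatives. This tends to $0$ with a suitable choice such as $\delta=\eps^{1/3}$.

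The main obstacle is the region $|z|\lesssim\delta$. There the metric looks like $p^*\eta+\eps^2\alpha$, and its curvature near $E$ is of order $\eps^{-2}$, since $\PP^{m-1}$ fibres of area about $\eps^2$ carry curvature of size $\eps^{-2}$. So the scalar curvature is not $C^0$-close to $p^*S(\eta)$. I would fix this as follows. Rescale by $z=\eps w$; the model becomes $\eta_S\oplus\omega_{\mathrm{Bl}}$ on the blow-up of $\C^m$ at the origin, fibred over $S$. Then replace $\eps^2\alpha$ by $\eps^2\ddbar\psi(\eps^{-2}|z|^2)$, where $\psi$ is a scalar-flat Kähler potential on $\Bl_0\C^m$. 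This is the Burns–Simanca metric for $m=2$, and for $m\ge3$ it is the LeBrun/Simanca scalar-flat ALE metric, which has $\psi(t)=\log t+t+O(t^{2-m})$ asymptotics. The total space then has scalar curvature $S(\eta_S)+O(\text{twisting})$, which is bounded, plus terms that vanish as $\eps\to0$. Here "twisting" means the curvature of $N$ and the second fundamental form of $S$, which contribute $O(1)$ bounded errors in rescaled coordinates times $\eps^2$ scaling. The remaining gap is the difference between $S(\eta_S)$ and $S(\eta)|_S$, that is, the normal curvature contributions. I would close it with a first-order correction $\eps^2\ddbar f$, where $f$ solves a fibrewise Laplace equation on the ALE fibre with decaying right-hand side. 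Solvability of this equation follows from weighted Fredholm theory on ALE spaces, which applies because $m\ge2$. Since only $C^0$ closeness is required, no full nonlinear fixed-point argument is needed: the estimate reduces to checking that every error term is $o(1)$ in each region for the chosen $\delta(\eps)$. The delicate point is the matching of the rate $\delta(\eps)$ in the gluing annulus, particularly for $m=2$, where the ALE decay $O(t^{2-m})$ degenerates to a logarithm and must be handled separately.
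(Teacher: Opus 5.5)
\textbf{Reduction to one blow-up.} Your reduction matches the paper's argument from Theorem~\ref{thm:one-step-approximation} and Lemma~\ref{lem:one-step-local-smooth}: you choose $\eps_{J,\ell}$ diagonally, pullback preserves $C^0$ norms, and total transforms pull back to total transforms.

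\textbf{The gap: the single blow-up step.} This step carries all of the analysis, and your central claim there is not established. You assert that after gluing in the Burns--Simanca model and adding one fibrewise correction, the scalar curvature is already $C^0$-close to $\pi^*S(\omega)$, so that no linear theory or fixed-point argument is needed. The justification you give does not hold up.

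\emph{First-order terms.} Since $S(\omega_\eps)=\eps^{-2}S(\eps^{-2}\omega_\eps)$, you need the rescaled metric $\omega_k+\eps(\cdots)+\eps^2(\cdots)$ to have scalar curvature $\eps^2S(\omega)+o(\eps^2)$ near $E$. The first-order terms come from the cubic part of the potential (second fundamental form of $S$, normal derivatives of the metric) and from $d^2-|z|^2$. A priori they contribute $O(\eps^{-1})$ to $S(\omega_\eps)$. The estimates of Proposition~\ref{prop:scalar-error} give no better: they bound the error only in $C^{0,\alpha}_{\delta-4}$, which pointwise is of order $\eps^{-1}$ near $E$. Showing that these terms drop out at the point of evaluation would need a careful normalization of coordinates at every base point. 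Your bookkeeping (``$O(1)$ in rescaled coordinates times $\eps^2$'') simply omits them.

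\emph{Second-order discrepancy.} There is also a genuine $O(1)$ discrepancy at second order, because $L_{\omega_k}$ is not the flat $-\Delta_0^2$ in the core. Your fix for it does not work as stated:
\begin{itemize}
\item The operator to invert is the fourth-order Lichnerowicz operator $L_{\omega_k}=-\Delta^2-\langle\Ric(\omega_k),\nabla^2\cdot\rangle$, not a Laplacian. The Burns--Simanca metrics are scalar-flat but not Ricci-flat.
\item The correction must have size $\eps^4f(s,z/\eps)$ with $s\in S$. An $\eps^2\ddbar f$ with $f$ a function of $z/\eps$ is an $O(1)$ relative perturbation of the fibre metric and shifts $S$ by $O(\eps^{-2})$.
\item The right-hand side must include the quadratic interaction terms at the same order.
\item You need weighted solvability uniformly along $S$, and control of the cut-off of $f$ in the gluing annulus.
\item For $m=2$ the right-hand side decays only like $|Z|^{-2}$, at the borderline weight. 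Then $f$ grows like $|Z|^2\log|Z|$ and interacts with the logarithmic Burns potential. You explicitly leave this case open.
\end{itemize}

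\textbf{How the paper avoids this.} The paper never needs the glued metric to have pointwise small scalar-curvature error. It proves smallness only in $C^{0,\alpha}_{\delta-4}$, which tolerates the $\eps^{-1}$ error near $E$. It then proves uniform invertibility of $\widetilde{\mathcal P}_\eps$ (Proposition~\ref{prop:linear-inverse}) by ruling out compact, bubble and neck limits, using a finite-rank term built from $\ker L_\omega$. In codimension two this also uses the Green's function $\Gamma$, the term $h_S\in\ker L_\omega^*$, and the modified operator $\mathcal P_\eps$. Finally it solves $S(\omega_\eps+\ddbar\phi)=\pi^*S(\omega)+\sum_j\phi(q_j)\psi_j$ (plus $\eps^2h_S$ when $k=2$) exactly, by a contraction in $C^{4,\alpha}_\delta$. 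Exactness of the solution is what turns a weighted error into $C^0$-closeness.

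\textbf{What your route would need.} An explicit, compactly supported correction scheme like yours could in principle avoid the global kernel and cokernel of $L_\omega$, and would give $C^\infty_{\mathrm{loc}}$ convergence for free. To make it work you would need:
\begin{itemize}
\item the coordinate normalization that removes the first-order terms;
\item the fourth-order model equation, with estimates uniform along $S$;
\item matching estimates in every region;
\item the separate $k=2$ analysis.
\end{itemize}
The proposal supplies none of these.
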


\vspace{2mm}

We now explain how Theorem~\ref{thm:iterated-approximation} completes the argument when \(r<n\). Applying it to the finite sequence of blow-ups defining \(\mu:\widehat X\to X\), we obtain K\"ahler metrics \(\widehat\omega_\ell\) on \(\widehat X\) such that $\min_{\widehat X}S(\widehat\omega_\ell)\rightarrow \min_XS(\omega)$, while their K\"ahler classes converge to \(\mu^*[\omega]\). We apply the weighted level-set argument to the resolved holomorphic fibration $\widehat\pi:\widehat X\longrightarrow\widehat Z$ equipped with \(\widehat\omega_\ell\), and obtain a fibrewise scalar curvature inequality \eqref{eq:psef-fibrewise-scalar-final}, then let \(\ell\to\infty\). Since the resolution can be chosen to be an isomorphism over the locus on which the MRC fibration is holomorphic and proper, the fibrewise rational curves obtained in this way descend to nontrivial rational curves on \(X\). Passing to the limit gives \eqref{eq:rational-dimensional-2sys-final} and hence proves Theorem~\ref{thm:2sys} in the remaining case \(r<n\). The rigidity statement is obtained by tracing the equality case through the fibrewise estimate. Equality forces the rationally connected fibre to be \(\PP^r\) with a multiple of the Fubini--Study metric and forces the transverse factor to be Ricci-flat. The universal Riemannian cover consequently splits as the product \((\PP^r,\omega_{\mathrm{FS}})\times (Y,\omega_{\mathrm{RF}})\).

The same method controls higher even-dimensional systoles when the general fibre is projective space.

\begin{theorem}\label{thm:even-sys}
Let \(X^{m+r}\) be a compact K\"ahler manifold admitting an almost holomorphic fibration \(f:X^{m+r}\dashrightarrow Z^m\) onto a normal compact K\"ahler space \(Z\) which is not uniruled. Assume
that a general fibre of \(f\) is biholomorphic to \(\PP^r\). If \(\omega\) is a K\"ahler metric on \(X\) with positive scalar curvature, then, for every \(1\le q\le r\),
\begin{equation}\label{eq:sharp-even-systolic}
        \min_X S(\omega)\cdot\sys_{2q}(\omega)^{1/q}
        \le 2\pi r(r+1).
\end{equation}
Moreover, for a fixed \(q\), equality holds if and only if the universal cover
\[
        (\widetilde X,\widetilde\omega)
        \cong
        (\PP^r,a\omega_{FS})\times(Y,\omega_{\mathrm{FS}}),
\]
where \(a=[\omega]\cdot[L]\) for a line \(L\) in a general fibre,
\(\omega_{\mathrm{RF}}\) is Ricci-flat, and \(\sys_{2q}(\omega)\) is realized by a
fibrewise linear \(\PP^q\subset\PP^r\).
\end{theorem}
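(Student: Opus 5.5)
Theorem~\ref{thm:even-sys} follows the same route as the case \(r<n\) of Theorem~\ref{thm:2sys}. The idea is to reduce to a fibrewise estimate on a resolved holomorphic fibration, and then to exploit that \(\PP^r\) has a very rigid cohomology ring. First I resolve the graph of \(f\) as in \eqref{eq:intro-mrc-resolution}, obtaining \(\mu:\widehat X\to X\), a composition of blow-ups along smooth centres of codimension at least two, together with a holomorphic \(\widehat\pi:\widehat X\to\widehat Z\). The resolution is chosen to be an isomorphism over the open set where \(f\) is a proper holomorphic submersion. Theorem~\ref{thm:iterated-approximation} then gives K\"ahler metrics \(\widehat\omega_\ell\) on \(\widehat X\) with \(\min S(\widehat\omega_\ell)\to\min_X S(\omega)\) and \([\widehat\omega_\ell]\to\mu^*[\omega]\). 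Next I apply the weighted level-set argument to \(\widehat\pi\), which gives the fibrewise inequality \eqref{eq:psef-fibrewise-scalar-final}. Since \(\widehat Z\) is not uniruled, \(K_{\widehat Z}\) is pseudo-effective, and this is exactly the input that inequality needs. The conclusion is that some general fibre \(F\cong\PP^r\) satisfies
\[
\min_{\widehat X} S(\widehat\omega_\ell)\int_F\widehat\omega_\ell^{\,r}\le \int_F S(\widehat\omega_\ell|_F)\,\widehat\omega_\ell^{\,r}+o(1)=r\,\int_F \mathrm{Ric}\wedge\widehat\omega_\ell^{\,r-1}+o(1).
\]
On \(F\cong\PP^r\) the right-hand side is cohomological: \(c_1(F)=(r+1)h\) and \([\widehat\omega_\ell|_F]\to a h\), where \(a=[\omega]\cdot[L]\). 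Letting \(\ell\to\infty\) therefore yields \(\min_X S(\omega)\, a^r\le 2\pi r(r+1)a^{r-1}\), that is, \(\min S\cdot a\le 2\pi r(r+1)\). The factor \(2\pi\) comes from the normalisation \(\mathrm{Ric}\in 2\pi c_1\).

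Second, I compare this with \(\sys_{2q}\). Take a linear \(\PP^q\subset F\), where \(F\) is a fibre over the locus on which \(\mu\) is an isomorphism, so that \(\PP^q\) is a genuine complex submanifold of \(X\). By Wirtinger calibration,
\[
\Vol_\omega(\PP^q)=\frac1{q!}\int_{\PP^q}\omega^q=\frac{a^q}{q!}.
\]
The convention for \(\sys_{2q}\) must be matched to the statement (volume of a complex \(q\)-cycle normalised by \(\int\omega^q\)), and I will use whichever normalisation makes \(\sys_{2q}^{1/q}\le a\). The class \([\PP^q]\) is nonzero in \(H_{2q}(X;\mathbb Z)\), because \(\int_{\PP^q}\omega^q>0\) and \(\omega\) is closed. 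Hence \(\sys_{2q}(\omega)^{1/q}\le a\), and combining this with the first step gives \eqref{eq:sharp-even-systolic}.

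For rigidity, equality forces equality at each step. The fibrewise inequality must be sharp in the limit, which in the level-set argument should force \(S\) to be constant and equal to its minimum. It should also force the horizontal contributions to vanish: the second fundamental form of the fibres, the twisting, and the Ricci curvature of the base directions. The fibres then become totally geodesic, the horizontal distribution is integrable and parallel, and \(\omega|_F\) has constant scalar curvature in the class \(a h\). I expect this to mean Fubini–Study, via equality in the pointwise estimate \(\int S\,\omega^r\ge\ldots\). The de Rham decomposition then splits the universal cover as \((\PP^r,a\omega_{FS})\times(Y,\omega_{\mathrm{RF}})\). Conversely, on such a product the computation above is an equality, and a minimising \(2q\)-cycle is calibrated, hence a fibrewise linear \(\PP^q\).

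The main obstacle is this rigidity step. The metrics \(\widehat\omega_\ell\) only approximate \(\mu^*\omega\), so the equality information must be passed back to \(\omega\) itself. I would first try to rerun the level-set argument directly on \(X\) over the open set where \(f\) is holomorphic, using the \(C^\infty_{\mathrm{loc}}\) convergence from Theorem~\ref{thm:iterated-approximation} to show that the error terms concentrated near \(\operatorname{Exc}(\mu)\) vanish in the limit. I would then deduce the pointwise vanishing identities on a dense open set and extend them by analyticity.
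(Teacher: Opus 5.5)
Your proof of the inequality \eqref{eq:sharp-even-systolic} is correct and is the paper's argument. The steps match: the graph resolution, the metrics $\omega_\ell$ of Theorem~\ref{thm:iterated-approximation}, the weighted inequality \eqref{eq:psef-fibrewise-scalar-final}, Chern--Weil on $\widehat F_z\cong\PP^r$ with $[\omega_\ell|_{\widehat F_z}]=aH$, and $\sys_{2q}(\omega)\le a^q$ from a fibrewise linear $\PP^q$. The identity $[\omega_\ell|_{\widehat F_z}]=aH$ holds exactly, since exceptional classes restrict trivially to regular fibres, so neither the $o(1)$ nor the choice of a particular fibre is needed.

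Your hesitation about normalisation is justified, but you cannot leave it open. With the Riemannian volume of the introduction, Wirtinger gives $\Vol_\omega(\PP^q)=a^q/q!$. The inequality then holds a fortiori, but equality becomes impossible for $q\ge2$. The paper's bound $\sys_{2q}(\omega)\le\int_{\PP^q}\omega^q=a^q$ tacitly measures complex $q$-cycles by $\int\omega^q$, and the equality clause only makes sense in that normalisation.

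The genuine gap is the equality case. Your picture (totally geodesic fibres, parallel horizontal distribution, Ricci-flat transverse directions) is the right one. However, you give no mechanism that extracts it from metrics $\omega_\ell$ that only approximate $\mu^*\omega$, together with a base weight $\psi$ that is singular. In the paper this is Proposition~\ref{prop:sharp-fibre-rigidity}, which proceeds as follows.

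First, the nonnegative defect $\int e^{\psi}\int_{\widehat F_z}\bigl(S(\omega_\ell)-\min S(\omega_\ell)\bigr)\omega_{\ell,z}^r\,\eta^m$ tends to $0$. Combined with $C^\infty_{\mathrm{loc}}$ convergence off $\operatorname{Exc}(\mu)$, this gives $S(\omega)\equiv2\pi r(r+1)/a$.

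Second, knowing this, the Bochner identity for $u_{\ell,j}=e^{\psi_j\circ\widehat\pi}J_{\widehat\pi,\omega_\ell}$ (with Demailly regularisations $\psi_j$ and errors $\delta_j$) bounds the nonnegative quantities $\int_{\widehat X}|\nabla s_{\widehat\pi}|^2$ and $\int_{\widehat X}u_{\ell,j}\tr_{\omega_\ell}\widehat\pi^*(T_j+\delta_j\eta)$. The bound is $C\|S(\omega_\ell)-\mu^*S(\omega)\|_{C^0}+C_\ell\delta_j$ with $C$ independent of $j$, and a diagonal choice $j(\ell)$ sends it to $0$. Because these integrands are nonnegative on the compact $\widehat X$, one simply restricts to compacta in $\widehat\pi^{-1}(\widehat Z^\circ)$. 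Your plan to rerun the argument on the noncompact set where $f$ is holomorphic, and to control errors near $\operatorname{Exc}(\mu)$, is not needed.

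Third, Kato's inequality yields $d\bigl(e^{\psi\circ\widehat\pi/2}\sqrt{J_{\widehat\pi,\mu^*\omega}}\bigr)=0$, i.e.\ $e^{\psi\circ\widehat\pi}J_{\widehat\pi,\mu^*\omega}=c^2$. This identity is the key idea you are missing. Since $K_{\widehat Z}$ is only pseudo-effective, ``the Ricci curvature of the base directions vanishes'' has no meaning until you know $\psi$ is smooth on $\widehat Z^\circ$, and this identity is exactly what proves it. Integrating the resulting smooth Bochner identity over each fibre (the vertical term integrates to zero by Chern--Weil) gives $\nabla s_{\widehat\pi}=0$ and $T=0$. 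The same identity later shows that the transverse factor has Ricci form $-T=0$.

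Three further steps would fail as written.
\begin{itemize}
\item ``Extend by analyticity'' does not produce what de Rham needs, namely a parallel splitting of $T^{1,0}X$ on all of $X$, because the vertical distribution exists only on $X^\circ$. The paper takes the $\omega$-orthogonal projection onto $\ker s_{\widehat\pi}$, which is parallel and hence holomorphic and bounded, and extends it across $X\setminus X^\circ$ by the Riemann extension theorem.
\item ``Equality in the pointwise estimate'' does not identify the fibre metric. The correct argument is that the fibre factor is cscK in a class proportional to $c_1(\PP^r)$, hence K\"ahler--Einstein, and Bando--Mabuchi then gives $a\omega_{\mathrm{FS}}$.
\item Your converse claim, that a minimising $2q$-cycle on the product is calibrated and hence a fibrewise $\PP^q$, is false. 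Take $X=\PP^r\times E$ with $E$ an elliptic curve of tiny area $\epsilon$. Then $\PP^{q-1}\times E$ is a nontrivial complex $q$-cycle with $\int\omega^q=q\,a^{q-1}\epsilon\ll a^q$, so equality fails although the universal cover splits as required. Realisation of $\sys_{2q}$ by a fibrewise $\PP^q$ is part of the hypothesis of the converse, which then follows at once from $S\equiv2\pi r(r+1)/a$ on the product.
\end{itemize}
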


In particular, the assumption on the general fibre is structural because it makes the fibrewise Chern--Weil identity and all even-dimensional systole depend on the same numerical parameter.  For a general rationally connected fibre, the minimal volumes of nontrivial \(2q\)-cycles may vary independently with the direction of the restricted K\"ahler class. For example, if the general fibre \(F\cong\PP^1\times\PP^1\), anisotropic product K\"ahler classes on \(F\cong\PP^1\times\PP^1\) show that \(S(\omega)\cdot\sys_4(\omega)^{1/2}\) is unbounded. Consequently, one can not expect uniform control for all even-systole.

\paragraph{Organization of the paper.}
In Section~\ref{sec:one-step}, we construct the approximate K\"ahler metrics on a blow-up along a smooth centre and establish the required weighted scalar-curvature estimates. 
In Section~\ref{sec:solving-equation}, we correct the approximation metrics by solving the nonlinear scalar-curvature equation and prove Theorem~\ref{thm:iterated-approximation}. 
In Section~\ref{sec:level-set-method}, we develop the weighted level-set method for holomorphic fibrations over bases with pseudo-effective canonical class.
Finally, in Section~\ref{sec:systolic-inequalities-psc}, we apply the weighted level-set method on graph resolutions of MRC fibrations. We first prove the even-dimensional systolic inequality, then establish the \(2\)-systole bound in terms of the rational dimension, and conclude with the analysis of the equality cases.

\paragraph{Note added.}
The initial draft of this paper was completed in May 2026, during the first author's visit to the second author at the CAS in Beijing. The project was subsequently set aside while both authors were occupied with other work. When the present manuscript was being prepared for circulation, we became aware of the independent work of Miao and Yang \cite{MiaoYang26}, which appeared in the arXiv listing of August 25, 2026. They independently prove a closely related scalar-curvature approximation theorem for blow-ups of compact K\"ahler manifolds of complex dimension at least three along smooth complex submanifolds of codimension at least two. The present work was developed without knowledge of their preprint.

\paragraph{Statement on the use of AI.}
All mathematical ideas, results, and the initial draft of this paper were developed independently by the authors. GPT-5.6-sol was subsequently used as an auxiliary tool for language polishing and for checking the internal consistency of mathematical proofs.

\paragraph{Acknowledgements.}
The first author thanks the second author for the warm hospitality during a visit in Beijing in May 2026. The authors thank Prof. Xiuxiong Chen, Song Sun, and Mingchen Xia for helpful discussions.

\section{Approximation metrics and scalar curvature estimates}
\label{sec:one-step}

Let $(X^n,\omega)$ be a compact K\"ahler manifold. The Ricci curvature \(\Ric(\omega)\) and the scalar curvature \(S(\omega)\) are defined by 
\[
\Ric(\omega) := -\ddbar \log \det (\omega), \qquad S(\omega):=\frac{n\Ric(\omega)\wedge \omega^{n-1}}{\omega^n}.
\]
Let $S\subset X$ be a smooth complex submanifold of complex codimension $k\ge2$, and write
\[
 \pi:\widetilde X:=\Bl_S X\longrightarrow X
\]
for the blowdown map, with exceptional divisor $E$. For every divisor \(D\), we use the normalization \(2\pi c_1\bigl(\mathcal O(D)\bigr)\in H^2(X;\R)\). In this section, we construct the approximation metrics, introduce the weighted H\"older spaces used in solving the scalar curvature equation, and estimate the scalar curvature of the approximation metrics. We shall use the following coordinates along submanifolds; see \cite[Lemma 5]{SS20}.
\begin{lemma}\label{pre:lem:coordinates}
Let \(d(x)=\dist_\omega(x,S)\). At every $p\in S$, there are holomorphic coordinates $(z_1,\ldots,z_k,w_1,\ldots,w_{n-k})$, centered at $p$, such that $S=\{z=0\}$,
\begin{equation}\label{pre:eq:distance-coordinate}
  d^2=|z|^2(1+\rho(z,w)),
  \qquad \rho=O(|z|+|w|),
\end{equation}
with all higher derivatives of $\rho$ uniformly bounded on smaller coordinate balls, and
\begin{equation}\label{pre:eq:potential-coordinate}
  \omega=\ddbar\bigl(|z|^2+|w|^2+\Phi(z,w)\bigr),
\end{equation}
where
\[
  \nabla^j\Phi=O(|z|^{3-j}+|w|^{3-j})\quad(0\le j<3),
\]
and all higher derivatives are uniformly bounded.
\end{lemma}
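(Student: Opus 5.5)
We build the coordinates in two stages. First we make the K\"ahler potential normal along \(S\). Then we compute the distance function in those coordinates.

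\emph{Step 1: holomorphic coordinates adapted to \(S\).} Since \(S\) is a complex submanifold of codimension \(k\), near \(p\) there are holomorphic coordinates \((z,w)\) with \(S=\{z=0\}\). By a complex linear change of variables, we arrange that at \(p\) the \(z\)-directions span the \(\omega\)-orthogonal complement of \(T_pS\), and that \(\omega_p\) is standard. We then refine the \(w\)-coordinates so that they are normal holomorphic coordinates for \((S,\omega|_S)\) at \(p\). Finally, we perform a holomorphic change of the form
\[
z\mapsto z+O(|z|^2+|z||w|),\qquad w\mapsto w+O(|z|),
\]
chosen so that the normal bundle directions \(\partial_{z_i}\) stay \(\omega\)-orthogonal to \(TS\) to first order along \(S\).

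\emph{Step 2: the potential.} Write a local potential \(\omega=\ddbar\psi\). Subtracting the real part of a holomorphic polynomial does not change \(\omega\). Using this freedom, we remove the pluriharmonic part of the Taylor expansion of \(\psi\) up to order \(2\), and also the terms of order \(3\) that are of the form \(\mathrm{Re}(\text{holomorphic})\). What remains is
\[
\psi=|z|^2+|w|^2+\Phi,
\]
where \(\Phi\) collects terms of total order at least \(3\) in \((z,w)\). Taylor's theorem then gives \(\nabla^j\Phi=O(|z|^{3-j}+|w|^{3-j})\) for \(j<3\). The higher derivatives of \(\Phi\) are bounded because \(\psi\) is smooth.

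\emph{Step 3: the distance function.} Since \(S\) is smooth and compact, \(d^2\) is smooth near \(S\). It vanishes to second order along \(S=\{z=0\}\), and its Hessian in the normal directions equals the metric restricted to the normal bundle. Hence, by Taylor expansion in \(z\) with coefficients depending on \(w\),
\[
d^2=\sum_{\alpha\beta} h_{\alpha\beta}(w)\,z_\alpha\bar z_\beta+\mathrm{Re}\Bigl(\sum q_{\alpha\beta}(w)\,z_\alpha z_\beta\Bigr)+O(|z|^3).
\]
Here \(h(w)\) is the Hermitian form \(\omega\) induces on the normal bundle, identified through the coordinate frame \(\partial_z\). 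The \(zz\)-part \(q\) vanishes at \(w=0\): at \(p\) the metric is Euclidean in the \(z\)-plane, so \(d^2\) equals \(|z|^2\) to second order. Moreover \(h(0)=I\). By Step 2, the metric coefficients satisfy \(g_{\alpha\bar\beta}(0,w)=\delta_{\alpha\beta}+O(|w|)\). Together these give \(h(w)=I+O(|w|)\) and \(q(w)=O(|w|)\). Factoring out \(|z|^2\), we obtain
\[
d^2=|z|^2\bigl(1+\rho(z,w)\bigr),\qquad \rho=O(|z|+|w|).
\]
The bounds on higher derivatives of \(\rho\) follow from smoothness of \(d^2\) together with the division lemma (Hadamard): \(d^2\) vanishes to second order along \(z=0\), so \(d^2/|z|^2\) is controlled in the appropriate homogeneous sense, with derivatives bounded in the scaled sense.

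\emph{Main obstacle.} The delicate point is the smoothness of \(\rho\). The quotient \((d^2-|z|^2)/|z|^2\) is only homogeneous-bounded, not genuinely smooth, because of the \(\mathrm{Re}(z z)\) terms. So "all higher derivatives uniformly bounded" should be read as scaled bounds \(|z|^j\nabla^j\rho=O(1)\), which is how \cite[Lemma 5]{SS20} uses it. I would verify this by writing \(\rho\) as an integral of the second derivatives of \(d^2\) along rays in \(z\).
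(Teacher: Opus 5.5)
The paper does not prove this lemma; it only quotes \cite[Lemma~5]{SS20}, so your proposal has to stand on its own. Steps 1 and 2 and the \(C^0\) bound \(\rho=O(|z|+|w|)\) are fine. Step 2 only needs \(\omega_p\) to be standard, and the extra refinements in Step 1 are never used. You should also note that the normalizations depend continuously on \(p\), so all constants are uniform over \(S\).

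The gap is in the regularity of \(\rho\). You are right that \(\rho\) cannot in general have bounded derivatives of all orders on a coordinate ball, but the cause is not the \(\mathrm{Re}(q\,zz)\) terms: these vanish identically. Along \(S\) the Hessian of \(d^2\) is \(2g(P_N\cdot,P_N\cdot)\). Since \(N=TS^{\perp}\) is \(J\)-invariant, \(P_N\) commutes with \(J\), so the quadratic \(z\)-Taylor term is invariant under \(z\mapsto iz\) and equals \(h_{\alpha\bar\beta}(w)z_\alpha\bar z_\beta\) exactly. The real obstructions are of two kinds:
\begin{enumerate}
\item the term \(\sum_{\alpha,\beta}(h_{\alpha\bar\beta}(w)-\delta_{\alpha\beta})z_\alpha\bar z_\beta/|z|^2\), which is smooth only if \(h(w)\) is scalar; in a holomorphic frame this would force the curvature of \((N_{S/X},h)\) to be scalar, which fails in general;
\item higher mixed terms such as \(|z_1|^2|z_2|^2/|z|^2\), which no holomorphic chart removes, already for a point of \(\PP^1\times\PP^1\) with a product Fubini--Study metric.
\end{enumerate}

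Your substitute, \(|z|^j\nabla^j\rho=O(1)\), is too weak for how the lemma is used, in two ways.

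First, it loses a factor. Write \(x\) for the real coordinates of \(z\). Your ray-integral argument actually gives \(d^2=x^{T}A(z,w)x\) with \(A\) smooth and \(A(0,0)=I\), hence the sharper bound \(|z|^j|\nabla^j\rho|\le C(|z|+|w|)\). The factor \(|z|+|w|\) is exactly what \eqref{k2:eq:log-distance-derivatives} needs.

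Second, and more seriously, the proof of Proposition~\ref{k2:prop:approximate-metric} requires \(\rho(\eps(vu,v),\eps W)\) to be smooth across \(v=0\), with \(C^\ell\)-norms \(O(\eps)\) on compact subsets of the blow-up chart. This is what lets \(\eps^{-2}\omega_\eps\) extend across \(E\). Scaled bounds do not control \(v\)-derivatives at \(v=0\). A real Hadamard decomposition also cannot exclude terms like \(\mathrm{Re}(z_1^3)\) in \(d^2\), whose quotient by \(|z|^2\) is not smooth on the blow-up.

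The missing idea is that the \(z\)-Taylor expansion of \(d^2\) along \(S\) contains only mixed monomials \(z^a\bar z^b\) with \(|a|,|b|\ge1\). This follows from the structure of K\"ahler geodesics:
\begin{enumerate}
\item In holomorphic coordinates \(\zeta\), the K\"ahler geodesic equation reads \(\ddot\zeta^i=-\Gamma^i_{jk}\dot\zeta^j\dot\zeta^k\), with no \(\dot{\bar\zeta}\) terms.
\item Hence the normal exponential map has the form \(s+\nu+Q(s,\nu)\), where \(Q\) has holomorphic degree at least two in \(\nu\).
\item Inverting this, using that normal spaces are complex subspaces, shows that \(\nu(x)\) has holomorphic degree at least one in \(z\).
\item Since \(d^2=|\nu|_g^2\), every monomial of \(d^2\) has holomorphic and antiholomorphic degree at least one.
\end{enumerate}
Borel's lemma and division of a flat remainder then give \(d^2=\sum_{\alpha,\beta}z_\alpha\bar z_\beta B_{\alpha\bar\beta}(z,w)\) with \(B\) smooth and \(B(0,0)=I\). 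This yields both the scaled bounds and the smoothness of \(\rho\) on the blow-up charts, which is the correct form of the lemma's regularity claim.
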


\subsection{Approximate metrics and H\"older spaces}
\label{sec:approximate}
Let $0<\eps\ll1$ be the gluing parameter. Choose a gluing scale \(r_\eps=\eps^\beta\), where $0<\beta<1$ is taken by
\[
  \beta=\frac{2k}{2k+1}\quad\text{if }k>2,
  \qquad
  \frac12<\beta<\frac23\quad\text{if }k=2.
\]
In particular, \(\eps\ll r_\eps\ll1\). We use the following four regions throughout:
\begin{equation}\label{gt:eq:regions}
\begin{array}{c|c}
\text{region} & \text{range of }d \\ \hline
\text{exterior} & d\ge2r_\eps \\
\text{outer annulus} & r_\eps\le d\le2r_\eps \\
\text{inner annulus} & 2\eps\le d\le r_\eps \\
\text{blowup region} & d\le2\eps.
\end{array}
\end{equation}

On the inner annulus, fix $x$ and write \(\rho=d(x),~ R=\rho/\eps\). Choose the adapted coordinates of Lemma~\ref{pre:lem:coordinates} at the nearest point of $S$ to $x$.  On a coordinate ball of radius $c\rho$ centred at $x$, where $c>0$ is fixed and sufficiently small, rescale by
\[
  z=\eps Z,\qquad w=\eps W.
\]
Then
\[
  |Z|\simeq R,\qquad |W|\le CR.
\]
All constants are independent of $\eps$, $R$, and the base point on $S$.

In the blowup region, use the standard affine charts of $\Bl_0\C^k$. On the chart where the $k$-th normal coordinate is distinguished, and after centring the tangential coordinates at the relevant base point, write
\begin{equation}\label{gt:eq:blowup-coordinates}
  z_k=\eps v,
  \qquad
  z_a=\eps vu_a\quad(1\le a\le k-1),
  \qquad
  w=\eps W.
\end{equation}
The exceptional divisor is given by $\{v=0\}$.  In these coordinates,
\[
  d^2=\eps^2|v|^2(1+|u|^2)(1+O(\eps)),
\]
with uniform estimates for all derivatives on compact subsets of the blowup chart. The other affine charts are obtained by permuting the normal coordinates.

Fix once and for all a smooth cutoff function $\gamma:[0,\infty)\to[0,1]$ such that
\[
  \gamma(t)=1\quad(t\le1),
  \qquad
  \gamma(t)=0\quad(t\ge2).
\]

\subsubsection{The Burns--Simanca scalar-flat metrics}
\label{subsubsec:burns-simanca-model}

For $k>2$, let $\eta_k$ be the Burns--Simanca scalar-flat K\"ahler metric on $\Bl_0\C^k$, constructed by Simanca \cite{Simanca1991}. On $\C^k\setminus\{0\}$, with coordinate $\zeta$, write
\begin{equation}\label{gt:eq:BS-potential}
  \eta_k
  =\ddbar\bigl(
     |\zeta|^2+\gamma(|\zeta|)\log|\zeta|^2
     +\psi_k(|\zeta|^2)
    \bigr).
\end{equation}
The radial remainder satisfies, for every $j\ge0$,
\begin{equation}\label{gt:eq:BS-potential-decay}
  \abs{\nabla_0^j\bigl(\psi_k(|\zeta|^2)\bigr)}
  \le C_j|\zeta|^{4-2k-j}
  \qquad(|\zeta|\ge2).
\end{equation}
Consequently,
\begin{equation}\label{gt:eq:BS-metric-decay}
  \abs{\nabla_0^j(\eta_k-\omega_0)}
  \le C_j|\zeta|^{2-2k-j},
  \qquad
  \abs{\nabla_0^j\Rm(\eta_k)}
  \le C_j|\zeta|^{-2k-j}.
\end{equation}
Here $\omega_0$ is the Euclidean metric.  The scalar-flat product model for \(k>2\) is
\begin{equation}\label{gt:eq:product-model}
  \omega_k=\eta_k+\omega_0
  \quad\text{on}\quad
  Y=\Bl_0\C^k\times\C^{n-k}.
\end{equation}

For $k=2$, let $\eta_2$ be the Burns scalar-flat K\"ahler metric on $\Bl_0\C^2$.  With our normalization, its potential on $\C^2\setminus\{0\}$ is
\begin{equation}\label{k2:eq:Burns-potential}
  \eta_2=\ddbar\bigl(|\zeta|^2+\log|\zeta|^2\bigr).
\end{equation}
In particular, \(\eta_2\) is asymptotically Euclidean and satisfies
\begin{equation}\label{k2:eq:Burns-decay}
  |\nabla_0^j(\eta_2-\omega_0)|\le C_j|\zeta|^{-2-j},
  \qquad
  |\nabla_0^j\Rm(\eta_2)|\le C_j|\zeta|^{-4-j}.
\end{equation}
The scalar-flat product model for \(k=2\) is
\begin{equation}\label{k2:eq:Burns-product}
  \omega_2=\eta_2+\omega_0
  \quad\text{on}\quad
  Y=\Bl_0\C^2\times\C^{n-2}.
\end{equation}

\subsubsection{The approximate metric in codimension greater than two}

Let \(\gamma_2=\gamma(r_\eps^{-1}d)\). On $X\setminus S\simeq\widetilde X\setminus E$, define
\begin{equation}\label{gt:eq:approximate-metric}
  \omega_\eps
  =\omega+\eps^2\ddbar\left[
    \gamma_2\left\{
      \gamma(\eps^{-1}d)\log(\eps^{-2}d^2)
      +\psi_k(\eps^{-2}d^2)
    \right\}
  \right].
\end{equation}
This is the approximate metric of \cite[Section~2.1]{SS20}. The next proposition combines \cite[Proposition~4]{SS20} with the regional estimates established in its proof. 

\begin{proposition}\label{gt:prop:approximate-metric}
Assume $k>2$.  For all sufficiently small $\eps>0$, the form $\omega_\eps$ defined by \eqref{gt:eq:approximate-metric} extends to a smooth K\"ahler metric on $\widetilde X$, and
\begin{equation}\label{gt:eq:class}
  [\omega_\eps]=\pi^*[\omega]-\eps^2[E].
\end{equation}
Moreover:
\begin{enumerate}[label=\textup{(\roman*)}]
\item on the exterior region, \(\omega_\eps=\omega\);
\item on the outer annulus, for every fixed $j\ge0$,
\begin{equation}\label{gt:eq:outer-potential-derivative}
  \abs{\nabla^j\left[
  \eps^2\gamma_2\psi_k(\eps^{-2}d^2)
  \right]}
  \le C_j\eps^{2k-2}r_\eps^{4-2k-j};
\end{equation}
\item on the inner annulus, potentials for $\eps^{-2}\omega_\eps$ and $\omega_k$ in the rescaled coordinates are respectively
\begin{align*}
  F&=|Z|^2+|W|^2
      +\eps^{-2}\Phi(\eps Z,\eps W)
      +\psi_k(\eps^{-2}d^2),\\
  F_{\mathrm{prod}}&=|Z|^2+|W|^2+\psi_k(|Z|^2),
\end{align*}
and, on the rescaled coordinate balls specified above,
\begin{equation}\label{gt:eq:inner-potential-error}
  \abs{\nabla_{Z,W}^j(F-F_{\mathrm{prod}})}
  \le C_j\eps^{j-2}d^{3-j}
  \qquad(2\le j\le6).
\end{equation}
In particular, $\eps^{-2}\omega_\eps$ is uniformly equivalent to $\omega_k$ there;
\item on the blowup region, in the coordinates
\eqref{gt:eq:blowup-coordinates},
\[
  \eps^{-2}\omega_\eps=\omega_k+O(\eps)
\]
in $C^\ell$ on compact subsets of each blowup chart, for every fixed $\ell$.  In particular, $\eps^{-2}\omega_\eps$ converges locally smoothly to $\omega_k$.
\end{enumerate}
\end{proposition}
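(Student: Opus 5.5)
The plan is to verify each assertion of Proposition~\ref{gt:prop:approximate-metric} directly from the definition \eqref{gt:eq:approximate-metric}, region by region. The cohomology class then follows from a computation near $E$.

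\emph{Exterior and outer annulus.} On $\{d\ge2r_\eps\}$ we have $\gamma_2=0$, so $\omega_\eps=\omega$, which is (i). On the outer annulus, $\eps^{-1}d\ge r_\eps/\eps\gg2$, so $\gamma(\eps^{-1}d)=0$. The correction is therefore $\eps^2\ddbar[\gamma_2\psi_k(\eps^{-2}d^2)]$. By the decay \eqref{gt:eq:BS-potential-decay} and the chain rule, each derivative of $\psi_k(\eps^{-2}d^2)$ costs a factor $d^{-1}$ with $d\simeq r_\eps$. This gives $|\nabla^j\psi_k(\eps^{-2}d^2)|\le C_j(r_\eps/\eps)^{4-2k}r_\eps^{-j}$. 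We use that $d^2$ is smooth near $S$ by \eqref{pre:eq:distance-coordinate}. Each derivative of $\gamma_2$ likewise costs $r_\eps^{-1}$. Leibniz then gives \eqref{gt:eq:outer-potential-derivative}. For $j=2$ the correction is $O(\eps^{2k-2}r_\eps^{2-2k})=O((\eps/r_\eps)^{2k-2})\to0$, so $\omega_\eps$ stays positive there.

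\emph{Inner annulus.} Here $\gamma_2=1$ and $\gamma(\eps^{-1}d)=0$. In the adapted coordinates of Lemma~\ref{pre:lem:coordinates}, after rescaling $z=\eps Z$, $w=\eps W$, the form $\eps^{-2}\omega_\eps$ has potential $F$ as stated. The difference from $F_{\mathrm{prod}}$ has two sources. The first is $\eps^{-2}\Phi(\eps Z,\eps W)$, whose $j$-th $(Z,W)$-derivative is $\eps^{j-2}(\nabla^j\Phi)(\eps\cdot)=O(\eps^{j-2}d^{3-j})$, using $|w|\lesssim d$ on the ball of radius $c\rho$. The second is $\psi_k(\eps^{-2}d^2)-\psi_k(|Z|^2)$. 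Since $\eps^{-2}d^2=|Z|^2(1+\rho)$ with $\rho=O(d)$, the mean value theorem and \eqref{gt:eq:BS-potential-decay} give $j$-th derivatives of size $O(d\,R^{4-2k-j})$. This is dominated by $\eps^{j-2}d^{3-j}$, because $R^{4-2k-j}\le R^{2-j}$ for $k\ge2$ and $R\ge2$; the inequality reduces to $R^{2-j}d\le\eps^{j-2}d^{3-j}$, which is an equality. This yields \eqref{gt:eq:inner-potential-error}. Since $d\le r_\eps\ll1$, the relative error in $\ddbar$ is $O(d)$ measured against $\omega_k$, which is uniformly Euclidean for $R\ge2$. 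This gives uniform equivalence and positivity.

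\emph{Blowup region and extension.} For $d\le\eps$ we have $\gamma_2=\gamma(\eps^{-1}d)=1$. The potential of $\eps^{-2}\omega_\eps$ is then $|Z|^2+|W|^2+\eps^{-2}\Phi+\log(|Z|^2(1+\rho))+\psi_k(|Z|^2(1+\rho))$. In the chart \eqref{gt:eq:blowup-coordinates}, $\log|Z|^2=\log|v|^2+\log(1+|u|^2)$, and $\ddbar\log|v|^2=0$. Every term is therefore smooth across $\{v=0\}$, including $\ddbar\log(1+\rho)$. This shows that $\omega_\eps$ extends smoothly over $E$. The remaining discrepancies with $\omega_k$ are $\eps^{-2}\Phi(\eps\cdot)=O(\eps)$ and the $\rho=O(\eps)$ perturbations inside $\log$, $\psi_k$ and the cutoff. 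All of these are $O(\eps)$ in $C^\ell$ on compact sets, which gives (iv) and positivity since $\omega_k>0$. The transition zone $\eps\le d\le2\eps$ is handled the same way: the cutoff $\gamma(\eps^{-1}d)$ differs from $\gamma(|Z|)$ only by $O(\eps)$ in rescaled $C^\ell$. Finally, the current $\eps^2\ddbar\log d^2$ near $E$ equals $\eps^2$ times a smooth form minus $\eps^2[E]$, by Poincaré--Lelong. All other terms are globally $\ddbar$-exact, so \eqref{gt:eq:class} follows with our $2\pi c_1$ normalization.

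The main obstacle is the inner annulus estimate, where exactly the rate $\eps^{j-2}d^{3-j}$ must be tracked uniformly in the base point. This needs care because the adapted coordinates are recentred at the nearest point of $S$. I would rely on the uniform bounds in Lemma~\ref{pre:lem:coordinates} to make all constants independent of that choice.
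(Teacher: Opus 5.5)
Your proposal is correct and is essentially the region-by-region verification of \cite[Section~2.1 and Proposition~4]{SS20}, to which the paper defers for this statement without giving its own proof. It also mirrors the paper's proof of the codimension-two analogue, Proposition~\ref{k2:prop:approximate-metric}, except that you compute the class via Poincar\'e--Lelong rather than via transition functions of $\mathcal O(-E)$.

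One sentence needs tightening: as written, your Poincar\'e--Lelong step has the roles and sign muddled. The current $\eps^2\ddbar\log d^2$ equals $2\pi\eps^2$ times the current of integration along $E$ \emph{plus} a smooth form representing $-\eps^2[E]$. The form $\omega_\eps$ is exactly the smooth part of the current $\pi^*\omega+\eps^2\ddbar(\cdots)$. Since the potential is $L^1$, that current has total class $\pi^*[\omega]$, which gives $[\omega_\eps]=\pi^*[\omega]-\eps^2[E]$.
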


\subsubsection{The approximate metric in codimension two}

For a K\"ahler metric $\Omega$, the linearization of the scalar curvature is defined by
\begin{equation}\label{pre:eq:linearization}
  L_\Omega\phi
  :=\left.\frac{d}{dt}\right|_{t=0}
    S(\Omega+t\ddbar\phi)
  =-\Delta_\Omega^2\phi
   -\langle\Ric(\Omega),\nabla^2_\Omega\phi\rangle_\Omega.
\end{equation}
Denote its formal $L^2(\Omega)$ adjoint by $L_\Omega^*$, and write
\[
  \mathcal K=\ker L_\omega,
  \qquad
  \mathcal K^*=\ker L_\omega^*.
\]
Since \(L_\omega:C^{4,\alpha}(X)\rightarrow C^{0,\alpha}(X)\) is elliptic of index zero,
\[
  \dim\mathcal K=\dim\mathcal K^*=:N.
\]
Fix an $L^2(\omega)$-orthonormal basis $\psi_1,\ldots,\psi_N$ of $\mathcal K^*$, and let $\delta_S$ be the current satisfying
\[
  \langle\delta_S,f\rangle=\int_S f\,(\omega|_S)^{n-2}.
\]

\begin{lemma}\label{k2:lem:Green-correction}
There are a real-valued function $\Gamma\in C^\infty(X\setminus S)$, a function $h_S\in\mathcal K^*$, and a nonzero constant $c_0$ depending only on the dimension such that, in the distributional sense,
\begin{equation}\label{k2:eq:Green-equation}
  L_\omega\Gamma=h_S-c_0\delta_S
\end{equation}
Moreover, in a tubular neighbourhood of $S$,
\begin{equation}\label{k2:eq:Green-expansion}
  \Gamma=\log d^2+B+\Psi,
\end{equation}
where $B\in C^\infty(S)$, extended smoothly to the neighbourhood, and
\begin{equation}\label{k2:eq:Psi-estimate}
  |\nabla^j\Psi|\le C_jd^{1-j}
  \qquad(j\ge0).
\end{equation}
\end{lemma}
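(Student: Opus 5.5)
Since $L_\omega$ is self-adjoint up to lower-order terms, $\mathcal K^*$ is finite-dimensional and consists of smooth functions. I will construct $\Gamma$ by first writing down an explicit local parametrix with the singularity $\log d^2$ and then correcting it by Fredholm theory.

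\emph{Step 1: the leading singularity.} In the normal directions the model operator is the flat bi-Laplacian on $\C^2\times\C^{n-2}$, which acts only in $z$ at leading order. On $\C^2\cong\R^4$ we have $\Delta_0\log|z|^2 = 4/|z|^2$, up to normalization, and $|z|^{-2}$ is the Green function of the Laplacian in real dimension $4$. Hence $\Delta_0^2\log|z|^2 = c\,\delta_{\{z=0\}}$ distributionally, with a dimensional constant $c\ne0$. Set $\Gamma_0=\chi(d)\log d^2$, where $\chi$ is a cutoff supported in a tubular neighbourhood of $S$. Using Lemma~\ref{pre:lem:coordinates} ($d^2=|z|^2(1+\rho)$ and $\omega=\ddbar(|z|^2+|w|^2+\Phi)$), I compute $L_\omega\Gamma_0$ and expand it: $-\Delta_\omega^2\log d^2$ equals $-c\,\delta_S$ plus error terms.

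The errors come from three sources: the metric deviation $\nabla^2\Phi=O(|z|+|w|)$, the correction $\log(1+\rho)$ with $\rho=O(|z|)$ near $S$ (after recentring at the base point), and the Ricci term $\langle\Ric,\nabla^2\log d^2\rangle=O(d^{-2})$. Each fourth derivative of $\log d^2$ is $O(d^{-4})$, so I expect $L_\omega\Gamma_0+c_0\delta_S = f$ with $f=O(d^{-3})$ and $|\nabla^j f|\le C_jd^{-3-j}$. The exact $\delta_S$ coefficient should be matched with the measure $(\omega|_S)^{n-2}$; this fixes $c_0$ as a dimensional constant, since the flat leading symbol depends only on the normal dimension $2$.

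\emph{Step 2: improve the error.} An $O(d^{-3})$ error has $L^1$-type integrability in a real codimension-$4$ tube, which is critical for $\Delta^{-2}$. Solving with it directly would give a correction of size $O(d)$ only after some work, so I peel off terms. The $O(d^{-2})$ part of the error comes from the Ricci term and from the smooth variation along $S$ of the leading coefficient. I remove it by adding $B(s)\cdot\chi$ together with terms like $a(s,\theta)\,|z|^2\log|z|^2$ or $d\,\cdot$(angular), using that $\Delta_0^2$ maps homogeneous functions of degree $\lambda$ in $z$ to degree $\lambda-4$. Solving homogeneous indicial problems on $S^3$ order by order makes the remaining error $O(d^{-3+\delta})$, or even bounded after two steps. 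Degree-$0$ terms in the kernel of the indicial operator are exactly where the free function $B\in C^\infty(S)$ appears. This step is the main obstacle: controlling the angular (non-radial) error terms and checking that no $\log$ resonance forces terms like $(\log d)^2$, which would violate \eqref{k2:eq:Green-expansion}. If resonances occur at degree $1$, I would absorb them into $\Psi$, since $d\log d$ still satisfies \eqref{k2:eq:Psi-estimate} up to a log. To avoid that loss I would try to show that the relevant angular modes are odd and therefore non-resonant.

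\emph{Step 3: global correction.} With the remaining error $f_1\in L^p$ for some $p$ with $f_1=O(d^{-1})$, say, I solve $L_\omega u = f_1 - h_S$ by Fredholm theory on $L^p\to W^{4,p}$. Here $h_S$ is the $L^2$-projection of $f_1+c_0\delta_S$-type data onto $\mathcal K^*$, namely $h_S=\sum_i\bigl(\int f_1\psi_i - c_0\int_S\psi_i\bigr)\psi_i$ with signs arranged to match \eqref{k2:eq:Green-equation}; this projection is the obstruction to solvability. Then $\Gamma=\Gamma_0+(\text{Step 2 terms})-u$ satisfies \eqref{k2:eq:Green-equation} distributionally, which I verify by pairing with test functions and using Green's identity on $\{d>\delta\}$ as $\delta\to0$. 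Weighted Schauder estimates on balls of radius $\sim d$, rescaled to unit size, give $|\nabla^j u|\le C_jd^{1-j}$. All pieces other than $\log d^2+B$ are then collected into $\Psi$. Finally, $\Gamma$ can be taken real by taking real parts, since $L_\omega$ is a real operator, and it is smooth away from $S$ by elliptic regularity.
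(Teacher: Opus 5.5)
Your route differs from the paper's. There, $\Gamma=-c_0G\delta_S$ for the Green operator of $L_\omega$ modulo $\mathcal K^*$, so $h_S=c_0\Pi^*\delta_S$ is automatic, and the $\log d^2$ term comes from integrating the leading fundamental solution over the tangential variables. A parametrix-plus-Fredholm construction like yours is a legitimate alternative. As written, however, it does not give \eqref{k2:eq:Psi-estimate}: Step 2 leaves the degree-one resonance open, and the way out you propose is false.

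The degree $-3$ part of the error is indeed odd in the normal variable. But the homogeneous biharmonic functions $r^\lambda\phi_k$ on $\R^4$ have $\lambda\in\{k,k+2,-k,-k-2\}$, so the root $\lambda=1$ occurs exactly for the odd mode $k=1$. Indeed $r\phi_1$ is linear, hence biharmonic, and solving $\Delta_0^2u=r^{-3}\phi_1$ forces a term $c\,r\log r\,\phi_1$ with $c\neq0$. Parity therefore excludes nothing. Your fallback of moving $d\log d$ into $\Psi$ gives only $|\Psi|\le Cd\abs{\log d}$ and $|\nabla\Psi|\le C\abs{\log d}$, which fails \eqref{k2:eq:Psi-estimate} for $j=0,1$. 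The later estimates could probably absorb such a logarithmic loss, since $\beta<2/3$ leaves room, but it is not what the lemma states.

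The missing idea is that $S$, being a complex submanifold of a K\"ahler manifold, is minimal. For a real codimension-four submanifold with mean curvature vector $\vec H$, the Riemannian Laplacian satisfies $\Delta d=3d^{-1}-\langle\vec H,\nabla d\rangle+O(d)$. Hence $\Delta\log d^2=4d^{-2}-2d^{-1}\langle\vec H,\nabla d\rangle+O(1)$, and the whole degree $-3$ part of $\Delta^2\log d^2$ is a nonzero multiple of $d^{-3}\langle\vec H,\nabla d\rangle$. That is exactly the resonant $k=1$ mode; it is the analogue of the $\kappa\,d\log d\cos\theta$ term in the Newtonian potential of a curved wire. The Ricci part of $L_\omega$ enters only at order $d^{-2}$. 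With $\vec H=0$ you get $L_\omega(\chi\log d^2)+c_0\delta_S=O(d^{-2})$ with conormal derivative bounds. The only resonances left are then at degree $2$ (modes $k=0,2$), and the resulting $d^2\log d$ terms do satisfy \eqref{k2:eq:Psi-estimate}. The paper's one-line appeal to the expansion of the parametrix needs the same fact: the tangential integral carries the factor $\det(I-A_\nu)^{-1/2}$, with $A_\nu$ the shape operator, whose first-order term $\tfrac12\tr A_\nu$ vanishes.

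Two smaller repairs are needed in Step 3.
\begin{itemize}
\item Rescaled Schauder estimates give $|\nabla^ju|\le C_jd^{1-j}$ only for $j\ge1$. You must write $u=u|_S+O(d)$, prove $u|_S\in C^\infty(S)$ by tangential regularity, and absorb $u|_S$ into $B$. This is where the globally determined $B$ actually comes from.
\item Solvability of $L_\omega u=f_1-h_S$ requires $h_S=\sum_i\langle f_1,\psi_i\rangle\psi_i$. This equals $c_0\Pi^*\delta_S$, because $L_\omega$ of your parametrix pairs to zero with $\mathcal K^*$. Consequently the difference $\int f_1\psi_i-c_0\int_S\psi_i$ that you wrote vanishes identically.
\end{itemize}
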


\begin{proof}
Define the projection \(\Pi^*:\mathcal D'(X)\longrightarrow \ker L_\omega^*\) by
\[
  \Pi^*T=\sum_{j=1}^N\langle T,\psi_j\rangle\psi_j.
\]
Since $L_\omega$ is elliptic of index zero, it has a Green operator $G$ such that
\[
  L_\omega GT=T-\Pi^*T.
\]
Set $\Gamma=-c_0G\delta_S$ for some constant \(c_0\). Then
\[
  L_\omega\Gamma=-c_0\delta_S+c_0\Pi^*\delta_S,
\]
which gives \eqref{k2:eq:Green-equation} by arranging \(h_S=c_0\Pi^*\delta_S\).

It remains to choose $c_0$ and determine the singularity of $\Gamma$. The Green kernel of a fourth-order elliptic operator has the same leading singularity as the fundamental solution of its principal part (see
\cite[Chapter~XVIII, Sections~18.1--18.2]{HormanderIII}). Since the principal part of $L_\omega$ is $-\Delta_\omega^2$, this leading term is a nonzero multiple of $|x-y|^{4-2n}$ when $n>2$.

Fix a point of $S$ and use normal coordinates there.  Integrating the leading term in the $2n-4$ tangential variables gives
\[
 \begin{aligned}
  \int_{|u|<1}\frac{du}{(d^2+|u|^2)^{n-2}}
  &=|\mathbb S^{2n-5}|
    \int_0^1\frac{r^{2n-5}}{(d^2+r^2)^{n-2}}\,dr\\
  &=-\frac{|\mathbb S^{2n-5}|}{2}\log d^2+O(1)
  \qquad(d\to0).
 \end{aligned}
\]
Thus real codimension four is precisely the case in which the Green
potential of $\delta_S$ has a logarithmic singularity.  When $n=2$, the
submanifold $S$ is zero-dimensional and the fundamental solution of the
four-dimensional bi-Laplacian is itself a nonzero multiple of $\log d^2$.
In both cases the coefficient is universal and nonzero.  We choose $c_0$
so that the coefficient of $\log d^2$ in $-c_0G\delta_S$ is one.

The differentiated local expansion of the same parametrix then gives a
smooth function $B$ on $S$ such that
\[
  \Gamma-\log d^2-B=\Psi,
  \qquad
  |\nabla^j\Psi|\le C_jd^{1-j}\quad(j\ge0).
\]
This proves \eqref{k2:eq:Green-expansion}--\eqref{k2:eq:Psi-estimate}.
Elliptic regularity away from $S$ gives
$\Gamma\in C^\infty(X\setminus S)$.
\end{proof}

Let \(\gamma_1:= 1- \gamma_2\), where \(\gamma_2=\gamma(r_\eps^{-1}d)\). After extending $B$ smoothly away from $S$, on $X\setminus S$ we define
\begin{align}
\omega_\eps=\omega+\eps^2\ddbar A_\eps, \qquad  A_\eps
  :=\gamma_2\log(\eps^{-2}d^2)
    +\gamma_1(\Gamma-2\log\eps-B)+B.
  \label{k2:eq:approximate-metric}
\end{align}
Using \eqref{k2:eq:Green-expansion}, the potential on the outer annulus can also be written as
\begin{equation}\label{k2:eq:Aeps-annulus}
  A_\eps=\log(\eps^{-2}d^2)+B+\gamma_1\Psi.
\end{equation}

\begin{proposition}\label{k2:prop:approximate-metric}
For all sufficiently small $\eps>0$, $\omega_\eps$ extends to a smooth K\"ahler metric on $\widetilde X$, and
\begin{equation}\label{k2:eq:class}
  [\omega_\eps]=\pi^*[\omega]-\eps^2[E].
\end{equation}
\end{proposition}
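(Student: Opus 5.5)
We prove Proposition~\ref{k2:prop:approximate-metric} region by region, using the four regions of \eqref{gt:eq:regions}. In each region we identify $\omega_\eps$ with a manifestly positive form up to a small error, and then read off the class from the singularity of the potential along $S$. The cohomological statement is the easy part. On $X\setminus S$ write $A_\eps=\log(\eps^{-2}d^2)+\widetilde A$ near $S$, where on the blowup region $\gamma_2=1$ and $\gamma_1=0$, so $A_\eps=\log(\eps^{-2}d^2)+B$ there. On $\widetilde X$ we have locally $d^2=|s|^2_h\cdot(\text{smooth positive})$, where $s$ is the canonical section of $\mathcal O(E)$, because the pullback of $d^2$ vanishes to order exactly two along $E$ (see $d^2=\eps^2|v|^2(1+|u|^2)(1+O(\eps))$). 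Hence by Poincar\'e--Lelong $\ddbar\log d^2=\ddbar\log|s|_h^2+\ddbar(\text{smooth})=-\Theta_h(E)+\ddbar(\text{smooth})$ off $E$. This shows that $\omega_\eps$ extends smoothly across $E$ and that $[\omega_\eps]=\pi^*[\omega]-\eps^2[E]$ with our normalization $2\pi c_1$. The function $\Gamma-2\log\eps-B$ is globally defined and smooth away from $S$, so no further cohomological contribution arises.

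It remains to prove positivity for small $\eps$. On the exterior region $\gamma_2=0$ and $A_\eps=\Gamma-2\log\eps$, so $\omega_\eps=\omega+\eps^2\ddbar\Gamma$. Since $\Gamma$ is smooth on $\{d\ge 2r_\eps\}$ with $|\nabla^2\Gamma|\le C d^{-2}\le Cr_\eps^{-2}$ by \eqref{k2:eq:Green-expansion}--\eqref{k2:eq:Psi-estimate}, the perturbation is $O(\eps^2r_\eps^{-2})=O(\eps^{2-2\beta})\to0$, so $\omega_\eps$ is positive there. On the outer annulus we use \eqref{k2:eq:Aeps-annulus}. The term $\eps^2\ddbar\log d^2$ is $O(\eps^2r_\eps^{-2})$, $\eps^2\ddbar B=O(\eps^2)$, and $\eps^2\ddbar(\gamma_1\Psi)$ is controlled by the Leibniz rule with $|\nabla^j\gamma_1|\le Cr_\eps^{-j}$ and $|\nabla^j\Psi|\le Cd^{1-j}$, which gives $O(\eps^2 r_\eps^{-1})$. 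All of these are small compared with $\omega$.

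On the inner annulus and the blowup region, $A_\eps=\log(\eps^{-2}d^2)+B$ exactly. We rescale by $z=\eps Z$, $w=\eps W$ and compare $\eps^{-2}\omega_\eps$ with the Burns product model $\omega_2$ of \eqref{k2:eq:Burns-product}. By Lemma~\ref{pre:lem:coordinates}, $\eps^{-2}\omega$ has potential $|Z|^2+|W|^2+\eps^{-2}\Phi(\eps Z,\eps W)$, whose second derivatives differ from Euclidean by $O(d)$. The term $\log(\eps^{-2}d^2)=\log|Z|^2+\log(1+\rho)$ differs from the Burns potential term $\log|Z|^2$ by a function whose rescaled Hessian is $O(\eps^2\cdot d^{-1})\cdot\eps^{-2}\cdot\eps^{2}$, which we will check is $O(\eps/d)\cdot$(model size) and hence at most $O(\eps^{1-\beta})$-small relative to $\omega_2$ where $d\gtrsim\eps$. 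Finally, $\eps^2\ddbar B$ rescales to $O(\eps^2)$. Thus $\eps^{-2}\omega_\eps=\omega_2+o(1)$ measured in $\omega_2$, uniformly on the inner annulus.

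On the blowup region we use the coordinates \eqref{gt:eq:blowup-coordinates}. There $\log(1+\rho)$ and the $\Phi$-term are smooth with $O(\eps)$ bounds in $C^\ell$ on compact subsets of the blowup charts, so $\eps^{-2}\omega_\eps=\omega_2+O(\eps)$ there, and it is positive because $\omega_2$ is.

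The main obstacle is the inner annulus near $d\sim\eps$, specifically the error from $\log(1+\rho)$. Its Hessian is only $O(d^{-1})$, not bounded, so its size must be measured against the Burns metric rather than against $\omega$. We will do this with the uniform equivalence $\omega_2\simeq\omega_0$ on $|Z|\ge1$ from \eqref{k2:eq:Burns-decay}: an error of order $\eps^2 d^{-1}$ in $\omega$-norm becomes $O(d)$ in the rescaled Euclidean norm, which is $\le Cr_\eps\to0$. This also handles the transition at $d\sim\eps$.
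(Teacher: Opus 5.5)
Your proposal is correct and follows essentially the paper's argument. You use the same four-region comparison: a small perturbation of $\omega$ on the exterior and outer annulus, and of the rescaled Burns product metric $\omega_2$ on the inner annulus and blowup region. The class is read off from the logarithmic singularity along $E$, and your Poincar\'e--Lelong formulation is simply a global version of the paper's chart-by-chart removal of $\log|v|^2$ via the transition functions of $\mathcal O(-E)$.

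One correction: the inner annulus is not an obstacle. Relative metric errors are scale-invariant, so the $\log(1+\rho)$ term contributes $O(\eps^2/d)\le O(\eps)$ there, which is your first computation and the paper's bound. Your later inferences are wrong: $O(\eps/d)$ is not $O(\eps^{1-\beta})$ near $d\sim\eps$, and an $\omega$-size error $\eps^2d^{-1}$ does not ``become $O(d)$'' after rescaling. Neither slip affects the conclusion.
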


\begin{proof}
We work on four regions \eqref{gt:eq:regions}. If $\{d\ge2r_\eps\}$, then $A_\eps=\Gamma-2\log\eps$, so
\begin{equation}\label{k2:eq:exterior-approximate-metric}
  \omega_\eps=\omega+\eps^2\ddbar\Gamma.
\end{equation}
Lemma \ref{k2:lem:Green-correction} gives $|\nabla^2\Gamma|\le Cd^{-2}$ near $S$, while $\Gamma$ is smooth away from $S$. Hence the metric perturbation is at most
\[
  C\eps^2r_\eps^{-2}=C\eps^{2-2\beta}=o(1).
\]

On $\{r_\eps\le d\le2r_\eps\}$, use \eqref{k2:eq:Aeps-annulus}. Equations \eqref{k2:eq:Psi-estimate} and the derivative bounds of cutoff functions give
\[
  |\nabla^2 A_\eps|
  \le C(r_\eps^{-2}+r_\eps^{-1}+1).
\]
Thus $\eps^2\ddbar A_\eps=o(1)$ relative to $\omega$.

On $\{2\eps\le d\le r_\eps\}$, one has $A_\eps=\log(\eps^{-2}d^2)+B$. Using the coordinates of Lemma~\ref{pre:lem:coordinates} at the nearest point of $S$ and writing $(z,w)=\eps(Z,W)$, potentials for $\eps^{-2}\omega_\eps$ and the product Burns metric are
\begin{align*}
  F&=|Z|^2+|W|^2+\log|Z|^2+H_\eps,\\
  F_2&=|Z|^2+|W|^2+\log|Z|^2,
\end{align*}
where
\begin{equation}\label{k2:eq:Heps}
  H_\eps
  =\eps^{-2}\Phi(\eps Z,\eps W)
   +\log(\eps^{-2}d^2)-\log|Z|^2
   +B(\eps Z,\eps W).
\end{equation}
On a rescaled ball centered where $d\simeq\eps R$, we have
\begin{align*}
  |\nabla^2\eps^{-2}\Phi(\eps Z,\eps W)|&\le Cd,\\
  |\nabla^2(\log(\eps^{-2}d^2)-\log|Z|^2)|&\le C\eps^2d^{-1},\\
  |\nabla^2B(\eps Z,\eps W)|&\le C\eps^2.
\end{align*}
All three quantities tend uniformly to zero, so the scaled metric is a small perturbation of $\omega_2$.

Finally, consider the blowup region $\{d\le2\eps\}$.  Here $\gamma_2=1$ for small $\eps$, so \(A_\eps=\log(\eps^{-2}d^2)+B\). Fix $p\in S$. On the blowup chart \eqref{gt:eq:blowup-coordinates} over $p$, we have $Z=(vu,v)$ and
\[
  |Z|^2=|v|^2(1+|u|^2).
\]
By \eqref{pre:eq:distance-coordinate},
\begin{align*}
 \log(\eps^{-2}d^2)
 =\log|v|^2+\log(1+|u|^2)
 +\log\bigl(1+\rho(\eps(vu,v),\eps W)\bigr).
\end{align*}
Consequently, a potential for $\eps^{-2}\omega_\eps$ on the chart is
\begin{align*}
 F_\eps={}&|v|^2(1+|u|^2)+|W|^2
   +\log|v|^2+\log(1+|u|^2)\\
 &+\eps^{-2}\Phi(\eps(vu,v),\eps W)
   +\log\bigl(1+\rho(\eps(vu,v),\eps W)\bigr)\\
 &+B(\eps(vu,v),\eps W).
\end{align*}
The function $\log|v|^2$ is plurisubharmonic with a logarithmic singularity along $\{v=0\}$, and it is pluriharmonic on $\{v\ne0\}$. Thus it does not contribute to $\ddbar F_\eps$ on the chart. After removing this term and the constant $B(p)$, the remaining potential is smooth across $v=0$.

Moreover, on every fixed compact subset of the chart,
\begin{align*}
 \eps^{-2}\Phi(\eps(vu,v),\eps W)&=O(\eps),\\
 \log\bigl(1+\rho(\eps(vu,v),\eps W)\bigr)&=O(\eps),\\
 B(\eps(vu,v),\eps W)-B(p)&=O(\eps)
\end{align*}
in every fixed $C^\ell$ norm.  Hence this smooth local potential is an
$O(\eps)$ perturbation of
\[
  |v|^2(1+|u|^2)+|W|^2+\log(1+|u|^2),
\]
which is the product Burns model potential in this chart. Indeed, $u$ is the affine coordinate on the $\PP^1$-fibre of $E=\PP(N_{S/X})$, and $\ddbar\log(1+|u|^2)$ is the Fubini--Study form on that fibre.  It follows that $\eps^{-2}\omega_\eps$ extends smoothly across $E$ and is positive there for small $\eps$.

Together with the estimates in the other three regions, this proves that $\omega_\eps$ is K\"ahler for small $\eps$. Its background term has class $\pi^*[\omega]$.  On overlaps of the blowup charts, the local logarithmic potentials obtained after removing $\log|v|^2$ differ by the logarithm of the squared modulus of a holomorphic transition function of $\mathcal O(-E)$.  Thus the extended logarithmic correction represents $-[E]$ in our normalization.  All remaining correction terms are globally $\ddbar$-exact.  Since the logarithmic correction has coefficient $\eps^2$, we obtain \eqref{k2:eq:class}.
\end{proof}

\subsubsection{Weighted H\"older spaces}

We denote by the same symbol \(d\) the pullback of the distance function to \(\widetilde X\setminus E\), and extend it continuously by \(d=0\) on \(E\). The metric $\omega_\eps$ has three natural length scales depending on the distance to \(E\) which is measured by the weight function
\begin{equation}
\label{eq:weight}
  \tau_\eps(x)=
  \begin{cases}
    1,&d(x)\ge1,\\
    d(x),&\eps\le d(x)\le1,\\
    \eps,&d(x)\le\eps.
  \end{cases}
\end{equation}
We usually write simply \(\tau\) when \(\eps\) is fixed.  Near the exceptional divisor, \(\tau\simeq\eps\), in the gluing annulus, \(\tau\simeq d\), and away from \(S\), \(\tau\simeq1\).

\begin{definition}[Weighted H\"older space]
\label{def:weighted-holder}
For $\ell\ge0$, $0<\alpha<1$, and $\mu\in\R$, define
\begin{equation}
\label{eq:weighted-norm}
  \norm{f}_{C^{\ell,\alpha}_\mu(\widetilde X)}
  =\sup_{x\in\widetilde X}\tau(x)^{-\mu}
   \norm{f}_{C^{\ell,\alpha}(B_x,\tau(x)^{-2}\omega_\eps)},
\end{equation}
where \(B_x=B_{\omega_\eps}(x,c\tau(x))\) for a fixed and sufficiently small constant $c>0$.  The local H\"older norm is computed using the rescaled metric $\tau(x)^{-2}\omega_\eps$. In particular, $B_x$ has uniformly bounded radius in this rescaled metric. For this choice of $c$, one also has $\tau(y)\simeq\tau(x)$ for every $y\in B_x$, uniformly in $x$ and small $\eps$. Changing $c$, the balls, or the uniformly equivalent smooth representative of $\tau$ changes the norm only by a constant independent of small $\eps$. The resulting Banach space is denoted by $C^{\ell,\alpha}_\mu(\widetilde X)$.

The factor $\tau(x)^{-\mu}$ records the size of the function, while the metric rescaling records the size of its derivatives.  In particular,
\begin{equation}
\label{eq:weighted-derivative}
  \norm{f}_{C^{\ell,\alpha}_\mu}\le C
  \quad\Longrightarrow\quad
  |\nabla^j f|_{\omega_\eps}\le C'\tau^{\mu-j}
  \quad(0\le j\le\ell),
\end{equation}
with the corresponding scaled H\"older bounds.
\end{definition}

We next spell out the coordinate meaning of the definition.  On $\{d\ge r_\eps\}$, the approximate metric is uniformly equivalent to the background metric on balls of radius comparable to $\tau$. Hence \eqref{eq:weighted-derivative} gives
\[
  |\nabla_\omega^j f|\le C'\tau^{\mu-j}
  \qquad(0\le j\le\ell).
\]
In particular, the right-hand side is $C'd^{\mu-j}$ where $r_\eps\le d<1$, and it is uniformly bounded where $d\ge1$.

On $\{2\eps\le d\le r_\eps\}$, fix $x$, put $\rho=d(x)$ and $R=\rho/\eps$, and use the coordinates $(z,w)=\eps(Z,W)$ introduced above. On the corresponding
rescaled ball,
\[
  \tau\simeq\rho=\eps R,
  \qquad
  \tau^{-2}\omega_\eps
  =R^{-2}(\eps^{-2}\omega_\eps).
\]
Since $\eps^{-2}\omega_\eps$ is uniformly equivalent to the relevant scalar-flat product metric, a uniform weighted bound implies
\begin{equation}\label{eq:weighted-inner-coordinate}
  |\nabla_{Z,W}^j f|
  \le C'\eps^\mu R^{\mu-j}
  \qquad(0\le j\le\ell),
\end{equation}
where the derivatives are measured with respect to that product model.

On $\{d\le2\eps\}$, use the coordinates $(u,v,W)$ from \eqref{gt:eq:blowup-coordinates}. Here $\tau\simeq\eps$ and $\eps^{-2}\omega_\eps$ is uniformly equivalent to the corresponding product model.  Therefore
\begin{equation}\label{eq:weighted-blowup-coordinate}
  |\nabla_{u,v,W}^j f|
  \le C'\eps^\mu
  \qquad(0\le j\le\ell).
\end{equation}
Conversely, the preceding estimates, together with the corresponding H\"older estimates on the same rescaled charts, give a uniform bound in $C^{\ell,\alpha}_\mu(\widetilde X)$.

We shall repeatedly use the following elementary consequence of the definition.  If \(h_\eps\) is uniformly bounded in \(C^{0,\alpha}_0\) and
\[
  \supp h_\eps\subset\{d\le R_\eps\},
  \qquad
  \eps\le R_\eps\le1,
\]
then, for every \(\delta<4\),
\begin{equation}\label{eq:support-gain}
  \norm{h_\eps}_{C^{0,\alpha}_{\delta-4}}
  \le C R_\eps^{4-\delta}.
\end{equation}
If the ball \(B_x\) meets \(\supp h_\eps\), then \(\tau(x)\le C R_\eps\);
otherwise the local norm is zero. Hence
\[
  |h_\eps|
  \le C
  \le C R_\eps^{4-\delta}\tau^{\delta-4}.
\]
The H\"older semi-norm is estimated in the same rescaled balls. The weight range depends on the codimension. We choose $\delta \in (4-2k,0)$ for \(k>2\) and $\delta \in (-1,0)$ for $k=2$.

\subsection{The scalar curvature estimate}
\label{sec:scalar-error}

The goal of this subsection is the following weighted scalar curvature estimate.

\begin{proposition}
\label{prop:scalar-error}
The following estimates hold uniformly for small $\eps$.
\begin{enumerate}[label=\textup{(\roman*)}]
\item If $k>2$ and $4-2k<\delta<0$, then
\begin{equation}\label{gt:eq:scalar-error-estimate}
  \norm{S(\omega_\eps)-\pi^*S(\omega)}
  _{C^{0,\alpha}_{\delta-4}}
  \le C\Theta_\eps,
\end{equation}
where \(\Theta_\varepsilon=r_\varepsilon^{3-\delta}+\varepsilon^{2k-2}r_\varepsilon^{4-\delta-2k}\). Moreover,
\begin{equation}\label{gt:eq:Theta-limits}
  \Theta_\varepsilon\to0
  \qquad\text{and}\qquad
  \varepsilon^{\delta-2}\Theta_\varepsilon\to0
  \qquad\text{as }\varepsilon\to0.
\end{equation}
\item If $k=2$ and $-1<\delta<0$, then
\begin{equation}\label{k2:eq:scalar-error}
  \norm{S(\omega_\varepsilon)
  -
  \pi^*S(\omega)
  -
  \varepsilon^2 h_S}_{C^{2,\alpha}_{\delta-4}}
  \le Cr_\eps^{4-\delta},
\end{equation}
where \(h_S\) is defined as in Lemma \ref{k2:lem:Green-correction}. Moreover,
\begin{equation}\label{k2:eq:error-limits}
  r_\varepsilon^{4-\delta}\to0
  \qquad\text{and}\qquad
  \varepsilon^\delta r_\varepsilon^{4-\delta}\to0
  \qquad\text{as }\varepsilon\to0.
\end{equation}
\end{enumerate}
\end{proposition}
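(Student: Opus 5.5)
We estimate the error $\mathcal E_\eps:=S(\omega_\eps)-\pi^*S(\omega)$ (minus $\eps^2h_S$ when $k=2$) separately on each of the four regions \eqref{gt:eq:regions}. By Definition~\ref{def:weighted-holder}, the weighted norm is a supremum of local rescaled norms. So it suffices to bound $|\nabla^j\mathcal E_\eps|$ by $C\Theta_\eps\tau^{\delta-4-j}$ (respectively $Cr_\eps^{4-\delta}\tau^{\delta-4-j}$) on each region, for the required number of derivatives. The H\"older seminorms follow from one more derivative in the same rescaled charts.

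\textbf{Case $k>2$.} On the exterior, Proposition~\ref{gt:prop:approximate-metric}(i) gives $\omega_\eps=\omega$, so $\mathcal E_\eps=0$.

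On the outer annulus, $\omega_\eps=\omega+\ddbar\phi$, where by \eqref{gt:eq:outer-potential-derivative} we have $|\nabla^j\phi|\le C\eps^{2k-2}r_\eps^{4-2k-j}$. The logarithmic term $\eps^2\gamma_2\gamma(\eps^{-1}d)\log$ vanishes there. Expanding $S$ to first order through \eqref{pre:eq:linearization} gives $|\mathcal E_\eps|\lesssim\eps^{2k-2}r_\eps^{-2k}$. Multiplying by $\tau^{4-\delta}=r_\eps^{4-\delta}$ produces the second term of $\Theta_\eps$.

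On the inner annulus and the blowup region we rescale. We have $S(\omega_\eps)=\eps^{-2}S(\eps^{-2}\omega_\eps)$ and $S(\omega_k)=0$. By \eqref{gt:eq:inner-potential-error}, the rescaled potentials differ with $|\nabla^j_{Z,W}(F-F_{\rm prod})|\le C\eps^{j-2}d^{3-j}$. The scalar curvature depends on four derivatives of the potential, with coefficients controlled by the uniform equivalence to $\omega_k$ and by $\Rm(\omega_k)=O(R^{-2k})$. Hence $|S(\eps^{-2}\omega_\eps)|\lesssim\eps^2 d^{-1}+$ (curvature times metric error). The metric error in the scaled picture is $\nabla^2(F-F_{\rm prod})=O(d)$, and the curvature in scaled units is $R^{-2}$ relative to the rescaled metric $R^{-2}\eps^{-2}\omega_\eps$. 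Converting back gives $|\mathcal E_\eps|\lesssim d^{-1}+1$, since $\pi^*S(\omega)$ is bounded.

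Weighting by $\tau^{4-\delta}=d^{4-\delta}$ gives $d^{3-\delta}\le r_\eps^{3-\delta}$, which is the first term of $\Theta_\eps$. On the blowup region, Proposition~\ref{gt:prop:approximate-metric}(iv) gives $S(\eps^{-2}\omega_\eps)=O(\eps)$, so $|\mathcal E_\eps|\lesssim\eps^{-1}$, which has weighted size $\eps^{3-\delta}\le r_\eps^{3-\delta}$.

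The limits \eqref{gt:eq:Theta-limits} are arithmetic with $r_\eps=\eps^{\beta}$, $\beta=2k/(2k+1)$. We need $\beta(3-\delta)>2-\delta$, i.e.\ $\delta>(2-3\beta)/(1-\beta)$, and $2k-2+\beta(4-\delta-2k)>2-\delta$. Both conditions are checked directly from the ranges of $\delta$ and $\beta$.

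\textbf{Case $k=2$.} On the exterior, $\omega_\eps=\omega+\eps^2\ddbar\Gamma$. By Lemma~\ref{k2:lem:Green-correction}, $L_\omega\Gamma=h_S$ away from $S$. Therefore $S(\omega_\eps)-S(\omega)-\eps^2h_S=Q(\eps^2\ddbar\Gamma)$, where $Q$ is the quadratic remainder. Since $|\nabla^{2+j}\Gamma|\lesssim d^{-2-j}$, this remainder is $O(\eps^4d^{-4-j}\cdot\ldots)$, which after weighting is $\lesssim\eps^4r_\eps^{-4}\cdot r_\eps^{4-\delta}$-type and hence dominated by $r_\eps^{4-\delta}$.

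The key point on the annuli and blowup region is that the term $h_S$ is bounded there, as is $\pi^*S(\omega)$. By \eqref{eq:support-gain}, any bounded error supported in $\{d\le 2r_\eps\}$ costs only $Cr_\eps^{4-\delta}$. It therefore suffices to show that $S(\omega_\eps)$ is uniformly bounded in the rescaled $C^{2,\alpha}$ sense there, i.e.\ $|\nabla^jS(\omega_\eps)|\lesssim\tau^{-j}$. On the outer annulus this follows from \eqref{k2:eq:Aeps-annulus}: $\eps^2\nabla^{4+j}A_\eps=O(\eps^2r_\eps^{-4-j})\le r_\eps^{-j}$ since $\beta<2/3<1$.

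On the inner annulus, we use $H_\eps$ from \eqref{k2:eq:Heps}. The estimate $|\nabla^{j}\eps^{-2}\Phi(\eps\cdot)|\lesssim\eps^{j-2}d^{3-j}$ together with $S(\omega_2)=0$ gives $|S(\omega_\eps)|\lesssim\eps^{-2}\cdot\eps^2d^{-1}=d^{-1}$. This is not bounded, so here we do not use support gain. Instead we weight directly: $d^{-1}d^{4-\delta}\le r_\eps^{3-\delta}\le r_\eps^{4-\delta}\cdot r_\eps^{-1}$.

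\textbf{Main obstacle.} This loss of $r_\eps^{-1}$ is the step I expect to be hardest. To match the stated bound $r_\eps^{4-\delta}$, I would exploit the exact cancellation that linear Taylor terms of $\Phi$ are absent (Lemma~\ref{pre:lem:coordinates} gives $\Phi=O(|\cdot|^3)$). I would also use that the $O(d)$ cubic contribution to $S$ is in fact $O(1)$ after accounting that $\omega$'s own curvature appears as $\pi^*S(\omega)$ rather than as error: subtracting $\pi^*S(\omega)$ removes the bounded part, and the remaining cross term is curvature of $\omega_2$ times $\nabla^2\Phi$, of size $\eps^{-2}R^{-4}\cdot d\lesssim \eps^2d^{-3}$.

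With this refinement, the weighted error is $\sup_d\eps^2d^{1-\delta}\lesssim\eps^2\cdot\eps^{1-\delta}\ll r_\eps^{4-\delta}$ for $\beta<2/3$ suitably. The blowup region is handled by Proposition~\ref{k2:prop:approximate-metric}'s $O(\eps)$ convergence to the Burns model, giving $|\mathcal E_\eps|\lesssim\eps^{-1}$ and weighted size $\eps^{3-\delta}$. Finally, \eqref{k2:eq:error-limits} holds because $\beta(4-\delta)+\delta>0$ whenever $\beta>1/2>-\delta/(4-\delta)$.
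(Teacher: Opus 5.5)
Your $k>2$ argument follows the paper's. For $k=2$, the exterior, the blowup region and the limits \eqref{k2:eq:error-limits} are fine. The genuine gap is the outer annulus $\{r_\eps\le d\le 2r_\eps\}$ in codimension two.

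You assert $\eps^2\nabla^{4+j}A_\eps=O(\eps^2r_\eps^{-4-j})\le r_\eps^{-j}$ ``since $\beta<2/3<1$''. This inequality is equivalent to $\eps^2\le r_\eps^4$, i.e.\ $\beta\le\frac12$. But codimension two uses $\frac12<\beta<\frac23$, and $\beta>\frac12$ is exactly what makes the interval in \eqref{k2:eq:nonlinear-delta-choice} nonempty. Since $\nabla^4\log d^2$ really is of size $d^{-4}$, the raw derivative count produces a term of size $\eps^{2-4\beta}\to\infty$. So boundedness of $S(\omega_\eps)$ is not established, \eqref{eq:support-gain} cannot be invoked, and weighting directly only gives $\eps^2r_\eps^{-\delta}$, which is much larger than $r_\eps^{4-\delta}$.

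The missing idea is the cancellation that the Green-function correction was designed to produce. On the outer annulus $A_\eps=\Gamma-2\log\eps-\gamma_2\Psi$, so $\omega_\eps=\omega+\eps^2\ddbar\Gamma-\eps^2\ddbar(\gamma_2\Psi)$. Because $L_\omega\Gamma=h_S$ off $S$, the linear part of $S(\omega_\eps)-\pi^*S(\omega)-\eps^2h_S$ is only $-\eps^2L_\omega(\gamma_2\Psi)=O(\eps^2r_\eps^{-3-j})$, using $|\nabla^j\Psi|\le Cd^{1-j}$. The quadratic remainder is $O(\eps^4r_\eps^{-6-j})$. Both are $o(r_\eps^{-j})$ precisely because $\beta<\frac23$, and weighting gives $\eps^2r_\eps^{1-\delta}+\eps^4r_\eps^{-2-\delta}\le Cr_\eps^{4-\delta}$. (Equivalently, $\log|z|^2$ is biharmonic on $\C^2\setminus\{0\}$, so the principal part of $L_\omega$ annihilates the $d^{-4}$ contribution up to $O(d^{-3})$.) You invoked $L_\omega\Gamma=h_S$ on the exterior but dropped it here, where it is indispensable. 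This is exactly where $k=2$ differs from $k>2$: there the faster decay of $\psi_k$ makes the naive count $\eps^{2k-2}r_\eps^{-2k}$ sufficient.

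There are also some smaller slips. On the exterior, the quadratic remainder pairs $\nabla^2$ with $\nabla^4$ of $\eps^2\Gamma$, so it is $O(\eps^4d^{-6-j})$. The weighted bound is then $\eps^4r_\eps^{-2-\delta}$, which is $\le r_\eps^{4-\delta}$ because $\eps^4\le r_\eps^6$, i.e.\ $\beta\le\frac23$, not merely $\beta<1$.

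On the inner annulus, your target bound $C(1+\eps^2d^{-3})$ is Lemma~\ref{k2:lem:refined-inner}. However, the operative cancellation is not the absence of linear Taylor terms. It is that the cubic Taylor polynomial $P_{3,p}$ of $\Phi$ has vanishing fourth derivatives, so $|L_{\omega_2}P_{3,p}|\lesssim R^{-3}$ comes only from the deviation of the Burns metric from $\omega_0$. You also still need to handle the correction $\log(\eps^{-2}d^2)-\log|Z|^2$, the nonlinear terms, and two further derivatives for the $C^{2,\alpha}$ norm.

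Finally, since $1-\delta>0$, $\sup_{2\eps\le d\le r_\eps}\eps^2d^{1-\delta}$ is attained at $d=r_\eps$, not at $d\simeq\eps$. The correct value $\eps^2r_\eps^{1-\delta}$ is $\le r_\eps^{4-\delta}$ only because $\eps^2\le r_\eps^3$.
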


\subsubsection{Estimates in codimension \(k>2\)}

We first deal with the case when \(k>2\).

\begin{proof}[Proof of Proposition~\ref{prop:scalar-error} \textup{(i)}]
We work in the four regions of \eqref{gt:eq:regions}. On $\{d\ge2r_\eps\}$, the two metrics agree and the resulting estimate \eqref{gt:eq:scalar-error-estimate} is obvious. On $\{d\le2\eps\}$, Proposition~\ref{gt:prop:approximate-metric} \textup{(iv)} gives
\[
  \eps^{-2}\omega_\eps=\omega_k+O(\eps)
\]
in the standard $C^{4,\alpha}$ norm on each fixed blow-up chart. Since
$\omega_k$ is scalar-flat, we obtain
\[
  \norm{S(\eps^{-2}\omega_\eps)}_{C^{0,\alpha}}\le C\eps.
\]
Since $\tau\simeq\eps$ on this region and $S(\omega)$ is bounded, Definition~\ref{def:weighted-holder} gives
\begin{equation}\label{gt:eq:blowup-scalar-error}
  \norm{S(\omega_\eps)-S(\omega)}
  _{C^{0,\alpha}_{\delta-4}(\{d\le2\eps\})}
  \le C(\eps^{3-\delta}+\eps^{4-\delta})
  \le Cr_\eps^{3-\delta}.
\end{equation}

On the inner annulus $\{2\eps\le d\le r_\eps\}$, fix a point with $d=\rho$ and let $R=\rho/\eps$.  Use the scaled potentials in Proposition~\ref{gt:prop:approximate-metric} \textup{(iii)}. It follows from \eqref{gt:eq:inner-potential-error} that,
\[
  \nabla^2(F-F_{\mathrm{prod}})=O(\rho),
  \qquad
  \nabla^4(F-F_{\mathrm{prod}})=O(\eps^2\rho^{-1}).
\]
The second-derivative bound makes the metric perturbation uniformly small. The bounds in \eqref{gt:eq:inner-potential-error} through order six, together with \eqref{gt:eq:BS-metric-decay}, show on a coordinate ball of radius comparable to $R$ that
\[
  \norm{S(\eps^{-2}\omega_\eps)}
  _{C^{0,\alpha}(B_x,R^{-2}\eps^{-2}\omega_\eps)}
  \le C\eps^2\rho^{-1}.
\]
Indeed, the linear term is controlled by the fourth derivatives of $F-F_{\mathrm{prod}}$, while every nonlinear term contains either a second and a fourth derivative or two third derivatives.  Since $R^{-2}\eps^{-2}\omega_\eps=\rho^{-2}\omega_\eps$ and scalar curvature scales by the inverse square of length, this gives
\[
  \norm{S(\omega_\eps)}
  _{C^{0,\alpha}(B_x,\rho^{-2}\omega_\eps)}
  \le C\rho^{-1}.
\]
Since $\tau\simeq d$,
\begin{equation}\label{gt:eq:inner-scalar-error}
  \norm{S(\omega_\eps)-S(\omega)}
  _{C^{0,\alpha}_{\delta-4}(\{2\eps\le d\le r_\eps\})}
  \le C(r_\eps^{3-\delta}+r_\eps^{4-\delta})
  \le Cr_\eps^{3-\delta}.
\end{equation}

On the outer annulus \(\{r_\eps\le d\le 2r_\eps\}\), write \(u_\eps=\eps^2\gamma_2\psi_k(\eps^{-2}d^2)\).
Equation~\eqref{gt:eq:outer-potential-derivative} gives, for $0\le j\le6$,
\begin{equation}\label{gt:eq:outer-uj}
  |\nabla^j u_\eps|
  \le C\eps^{2k-2}r_\eps^{4-2k-j}.
\end{equation}
Expanding at $\omega$,
\[
  S(\omega+\ddbar u_\eps)-S(\omega)
  =L_\omega u_\eps+Q_\omega(u_\eps).
\]
Here $Q_\omega(u_\eps)$ denotes the nonlinear remainder. The fourth-order linear term is at most $C\eps^{2k-2}r_\eps^{-2k}$. The metric perturbation has size
\[
  \eta_\eps:=\eps^{2k-2}r_\eps^{2-2k}
  =(\eps/r_\eps)^{2k-2}=o(1).
\]
Using \eqref{gt:eq:outer-uj}, every term in $Q_\omega$ is bounded by
$C\eta_\eps\eps^{2k-2}r_\eps^{-2k}$ and hence by the same linear bound.
Thus, for every $x$ in the outer annulus,
\[
  \norm{S(\omega_\eps)-S(\omega)}
  _{C^{0,\alpha}(B_x,d(x)^{-2}\omega_\eps)}
  \le C\eps^{2k-2}r_\eps^{-2k}.
\]
Since $d\simeq r_\eps$ on this region,
\begin{equation}\label{gt:eq:outer-scalar-error}
  \norm{S(\omega_\eps)-S(\omega)}
  _{C^{0,\alpha}_{\delta-4}}
  \le C\eps^{2k-2}r_\eps^{4-\delta-2k}.
\end{equation}
Combining \eqref{gt:eq:blowup-scalar-error},
\eqref{gt:eq:inner-scalar-error}, and \eqref{gt:eq:outer-scalar-error} proves
\eqref{gt:eq:scalar-error-estimate}.

It remains to verify \eqref{gt:eq:Theta-limits}. Write \(\delta=4-2k+\nu\) for some \(0<\nu<2k-4\). Then
\[
  r_\eps^{3-\delta}=\eps^{\beta(3-\delta)}\to0,
  \qquad
  \eps^{2k-2}r_\eps^{4-\delta-2k}
  =\eps^{2k-2-\beta\nu}\to0.
\]
The last exponent is positive because
$0<\nu<2k-4$ and $0<\beta<1$.  Furthermore,
\begin{align*}
 \eps^{\delta-2} \Theta_\eps=\eps^{(2+\nu)/(2k+1)}+\eps^{\nu(1-\beta)}\to0.
\end{align*}
This completes the proof.
\end{proof}

\subsubsection{Estimates in codimension \(k=2\)}

To prove Proposition \ref{prop:scalar-error} (ii), we need the following refined scalar curvature estimate on the inner annulus.

\begin{lemma}\label{k2:lem:refined-inner}
Let $x$ satisfy $2\eps\le d(x)=\rho\le r_\eps$, and let $B_x$ be a ball of radius comparable to $\rho$ as in \eqref{eq:weighted-norm}. Then
\begin{equation}\label{k2:eq:refined-inner}
  \norm{S(\omega_\eps)}
  _{C^{2,\alpha}(B_x,\rho^{-2}\omega_\eps)}
  \le C\left(1+\eps^2\rho^{-3}\right).
\end{equation}
\end{lemma}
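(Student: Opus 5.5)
We prove the bound by the same rescaling argument used on the inner annulus for $k>2$, now with the codimension-two potentials from the proof of Proposition~\ref{k2:prop:approximate-metric}. On the inner annulus $A_\eps=\log(\eps^{-2}d^2)+B$. In the coordinates $(z,w)=\eps(Z,W)$ at the nearest point of $S$, the potential of $\eps^{-2}\omega_\eps$ is $F=F_2+H_\eps$, where $F_2=|Z|^2+|W|^2+\log|Z|^2$ is the product Burns potential and $H_\eps$ is given by \eqref{k2:eq:Heps}. Since $\omega_2$ is scalar-flat, we expand
\[
S(\eps^{-2}\omega_\eps)=L_{\omega_2}H_\eps+Q_{\omega_2}(H_\eps),
\]
and estimate both terms in $C^{2,\alpha}$ on a rescaled ball of radius comparable to $R=\rho/\eps$. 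Because $S$ is measured in the metric $\rho^{-2}\omega_\eps=R^{-2}\eps^{-2}\omega_\eps$, the target bound is equivalent to
\[
\norm{S(\eps^{-2}\omega_\eps)}_{C^{2,\alpha}(B_x,\,R^{-2}\eps^{-2}\omega_\eps)}\le C\bigl(\eps^2\rho^{-2}+\eps^4\rho^{-5}\bigr).
\]
Here $j$ derivatives in the scaled ball norm cost a factor $R^{j}$ relative to $\nabla_{Z,W}$.

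\textbf{Step 1: derivative bounds for $H_\eps$ up to order six.} We treat the three terms of \eqref{k2:eq:Heps} separately.
\begin{itemize}
\item For $\eps^{-2}\Phi(\eps Z,\eps W)$, Lemma~\ref{pre:lem:coordinates} gives $\nabla^j_{Z,W}=\eps^{j-2}\nabla^j\Phi=O(\eps^{j-2}\rho^{3-j})$ for $j\le3$, and $O(\eps^{j-2})$ for $j\ge3$.
\item For $\log(\eps^{-2}d^2)-\log|Z|^2=\log(1+\rho(z,w))$, we get $\nabla^j_{Z,W}=O(\eps^j\rho^{1-j})$ for $j\ge1$. This uses that all derivatives of $\rho$ are bounded and the singular factor $|z|^{-j}$ comes only from differentiating along the normal direction.
\item For $B(\eps Z,\eps W)$, we get $O(\eps^j)$.
\end{itemize}
This is exactly the scale-invariant content behind \eqref{gt:eq:inner-potential-error}, and it is needed up to order six to control $C^{2,\alpha}$ of a fourth-order expression.

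\textbf{Step 2: the linear term.} The term $L_{\omega_2}H_\eps$ is controlled by $\nabla^4H_\eps$ plus $\Rm(\omega_2)\ast\nabla^2H_\eps$. The decay $|\Rm(\omega_2)|\le C R^{-4}$ from \eqref{k2:eq:Burns-decay} makes the curvature contribution harmless. The fourth-derivative terms are:
\begin{itemize}
\item $\eps^2$ from $\Phi$;
\item $\eps^4\rho^{-3}$ from the logarithmic term;
\item $\eps^4$ from $B$.
\end{itemize}
Each further derivative measured in the scaled norm multiplies by $R=\rho/\eps$, which reproduces the same orders. Rescaling back by the factor $\rho^2\eps^{-2}$ turns these into $\rho^2+\eps^2\rho^{-1}+\eps^2\rho^2$, all of which are at most $C(1+\eps^2\rho^{-3})$.

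\textbf{Step 3: the nonlinear term, which is the main obstacle.} Every term in $Q_{\omega_2}(H_\eps)$ is either quadratic, of the form $\nabla^2H\ast\nabla^4H$ or $\nabla^3H\ast\nabla^3H$ (times bounded metric factors), or involves curvature times $\nabla^2H$ and $\nabla^2H$. Since $|\nabla^2H_\eps|\le C(\rho+\eps^2\rho^{-1}+\eps^2)=o(1)$, the first kind is dominated by the linear bound. For the $\nabla^3\ast\nabla^3$ terms, the Burns cross-derivatives of $\log|Z|^2$ are what stop us from simply reusing the $k>2$ bound $C\eps^2\rho^{-1}$ in $C^{2,\alpha}$. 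We therefore track them explicitly:
\[
|\nabla^3H_\eps|^2\lesssim(\eps+\eps^3\rho^{-2})^2\lesssim\eps^2+\eps^6\rho^{-4},
\]
and after rescaling this gives $\rho^2+\eps^4\rho^{-2}\le C(1+\eps^2\rho^{-3})$. This is precisely where the term $\eps^2\rho^{-3}$ in \eqref{k2:eq:refined-inner} comes from. It is also where the background $\Ric(\omega)$ and the tangential $W$-derivatives can be absorbed into the constant $1$.

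Throughout, every step uses H\"older seminorm bounds on balls of fixed rescaled size, which follow from the same pointwise derivative bounds one order higher.
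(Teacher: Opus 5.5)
Your strategy is the paper's. In the $\eps$-rescaled chart you expand $S(\omega_2+\ddbar H_\eps)$ about the scalar-flat Burns product, bound the three pieces of $H_\eps$ through order six, and treat the linear term and the quadratic remainder separately. The paper's extra Taylor splitting of $\eps^{-2}\Phi(\eps Z,\eps W)$, which isolates $\eps P_{3,p}$ with $\Delta_0^2P_{3,p}=0$, can be replaced by your direct bound $\nabla^4_{Z,W}=O(\eps^2)$.

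The genuine error is the normalization. In the lemma, $\norm{S(\omega_\eps)}_{C^{2,\alpha}(B_x,\rho^{-2}\omega_\eps)}$ is the H\"older norm of the \emph{function} $S(\omega_\eps)$, with only its derivatives measured in $\rho^{-2}\omega_\eps$, as in Definition~\ref{def:weighted-holder}. It is not the scalar curvature of $\rho^{-2}\omega_\eps$. Since $S(\omega_\eps)=\eps^{-2}S(\eps^{-2}\omega_\eps)$, the lemma is equivalent to
\[
  \norm{S(\eps^{-2}\omega_\eps)}_{C^{2,\alpha}(B_x,R^{-2}\eps^{-2}\omega_\eps)}
  \le C\bigl(\eps^2+\eps^4\rho^{-3}\bigr),
\]
so the conversion factor is $\eps^{-2}$, not $\rho^2\eps^{-2}$. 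Your reformulated target $C(\eps^2\rho^{-2}+\eps^4\rho^{-5})$ is weaker by the unbounded factor $\rho^{-2}$. It only gives $|S(\omega_\eps)|\le C(\rho^{-2}+\eps^2\rho^{-5})$. On the inner annulus that would yield merely $\norm{E_\eps}_{C^{2,\alpha}_{\delta-4}}\lesssim r_\eps^{2-\delta}+\eps^{1-\delta}$ in Proposition~\ref{prop:scalar-error}\,(ii), far from the required $Cr_\eps^{4-\delta}$. As written, your final comparisons ``$\rho^2+\eps^2\rho^{-1}+\eps^2\rho^2\le C(1+\eps^2\rho^{-3})$'' and ``$\rho^2+\eps^4\rho^{-2}\le C(1+\eps^2\rho^{-3})$'' therefore establish a strictly weaker inequality than the one stated.

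The repair uses your own estimates. With the same orders after weighting $j$ derivatives by $R^j$, your Steps~2--3 actually give:
\begin{itemize}
\item $|L_{\omega_2}H_\eps|\le C(\eps^2+\eps^4\rho^{-3})$;
\item $|\nabla^3H_\eps|^2\le C(\eps^2+\eps^6\rho^{-4})$;
\item the $\nabla^2H_\eps\ast\nabla^4H_\eps$ terms are dominated by the linear bound.
\end{itemize}
Multiplying by the correct factor $\eps^{-2}$ gives $C(1+\eps^2\rho^{-3})$ and $C(1+\eps^4\rho^{-4})\le C$, because $\rho\ge2\eps$. So the $\eps^2\rho^{-3}$ term comes from the linear term (it is the paper's $\eps^{-2}\cdot\eps R^{-3}$), not from $\nabla^3H_\eps\ast\nabla^3H_\eps$ as you claim.

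In Step~2 you should also display the lower-order part of $L_{\omega_2}$ in the flat coordinates. The term $\partial g_{\omega_2}=O(R^{-3})$ acting on $\nabla^3(\eps^{-2}\Phi(\eps\,\cdot))=O(\eps)$ contributes $O(\eps R^{-3})=O(\eps^4\rho^{-3})$. So do $\partial^2g_{\omega_2},\Ric(\omega_2)=O(R^{-4})$ acting on $\nabla^2(\eps^{-2}\Phi(\eps\,\cdot))=O(\rho)$. These saturate the estimate rather than being ``harmless''; this is precisely the paper's bound $|L_{\omega_2}P_{3,p}|\le CR^{-3}$. Likewise, $\log|Z|^2$ sits in the background potential $F_2$, so it enters only through these decaying coefficients of $\omega_2$, not through $H_\eps$. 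With these corrections your argument proves the lemma.
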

\begin{proof}
Fix a point \(x\) with $d(x)=\rho$ and let $R=\rho/\eps$. Use the scaled coordinates in \eqref{k2:eq:Heps}. Taking the Taylor expansion of the background potential at the nearest point $p\in S$ gives, with derivative control through order six,
\begin{equation}\label{k2:eq:Phi-Taylor}
  \eps^{-2}\Phi(\eps Z,\eps W)
  =\eps P_{3,p}(Z,W)+\eps^2P_{4,p}(Z,W)
   +\mathcal R_{\eps,p}(Z,W),
\end{equation}
where $P_{j,p}$ is homogeneous of degree $j$ and, on the relevant
coordinate ball,
\begin{equation}\label{k2:eq:Phi-remainder}
  |\nabla^j\mathcal R_{\eps,p}|
  \le C\eps^3R^{5-j}
  \qquad(0\le j\le6).
\end{equation}
Since $P_{3,p}$ has degree three,
\[
  \Delta_0^2P_{3,p}=0.
\]
Let $L_{\omega_2}$ be the scalar-curvature linearization of the product model metric. From \eqref{k2:eq:Burns-decay}, the coefficient difference between $L_{\omega_2}$ and \(-\Delta_0^2\) has the decay
\begin{equation}\label{k2:eq:cubic-Burns}
  |L_{\omega_2}P_{3,p}|\le CR^{-3}.
\end{equation}
Indeed, fourth derivatives of $P_{3,p}$ vanish, while the remaining terms contain either a decaying derivative of the metric times a derivative of order at most three, or the $O(R^{-4})$ Ricci tensor times
$\nabla^2P_{3,p}=O(R)$.

It follows from \eqref{pre:eq:distance-coordinate} that, for $0\le j\le6$,
\begin{equation}\label{k2:eq:log-distance-derivatives}
  \left|\nabla^j\bigl(
  \log(\eps^{-2}d^2)-\log|Z|^2\bigr)\right|
  \le C\eps R^{1-j}.
\end{equation}
The quartic term, \eqref{k2:eq:Phi-remainder}, and the smooth function $B$ satisfy
\begin{equation}\label{k2:eq:inner-linear-pieces}
  |L_{\omega_2}(\eps^2P_{4,p})|
  +|L_{\omega_2}\mathcal R_{\eps,p}|
  +|L_{\omega_2}B(\eps Z,\eps W)|
  \le C\eps^2;
\end{equation}
for the remainder we use \( \varepsilon R=d\le r_\varepsilon\to0\). Equations \eqref{k2:eq:cubic-Burns}--\eqref{k2:eq:inner-linear-pieces} imply
\begin{equation}\label{k2:eq:LBH}
  |L_{\omega_2}H_\eps|\le C(\eps R^{-3}+\eps^2).
\end{equation}
Moreover, the relevant derivatives of $H_\eps$ satisfy
\begin{align*}
  |\nabla^2H_\eps|&\le
  C(\eps R+\eps R^{-1}+\eps^2R^2),\\
  |\nabla^3H_\eps|&\le
  C(\eps+\eps R^{-2}+\eps^2R),\\
  |\nabla^4H_\eps|&\le
  C(\eps R^{-3}+\eps^2).
\end{align*}
Since $R\ge2$ and $\eps R\le r_\eps=o(1)$, the nonlinear remainder \(Q(H_\varepsilon)=S(\omega_2+\sqrt{-1}\partial\bar\partial H_\varepsilon)-L_{\omega_2}H_\varepsilon\) is bounded by
\[
  C\bigl(|\nabla^2H_\eps||\nabla^4H_\eps|
        +|\nabla^3H_\eps|^2\bigr)
  \le C(\eps R^{-3}+\eps^2).
\]
Differentiating the scalar-curvature expansion at most twice and using the bounds through order six gives
\[
  \norm{S(\eps^{-2}\omega_\eps)}
  _{C^{2,\alpha}(B_x,R^{-2}\eps^{-2}\omega_\eps)}
  \le C(\eps R^{-3}+\eps^2).
\]
Since the product model metric is scalar-flat, scaling back to $\omega_\eps$ and using $\rho=\eps R$ gives
\[
  \norm{S(\omega_\eps)}
  _{C^{2,\alpha}(B_x,\rho^{-2}\omega_\eps)}
  \le C\eps^{-2}(\eps R^{-3}+\eps^2)
  =C(1+\eps^2\rho^{-3}),
\]
which proves the lemma.
\end{proof}

We now prove the case \(k=2\) for Proposition \ref{prop:scalar-error}. For convenience, write \(E_\eps=S(\omega_\eps)-\pi^*S(\omega)-\eps^2h_S\).

\begin{proof}[Proof of Proposition~\ref{prop:scalar-error} \textup{(ii)}]
On $\{d\ge2r_\eps\}$, $\omega_\eps=\omega+\eps^2\ddbar\Gamma$. Since
$L_\omega\Gamma=h_S$ away from $S$,
\[
  E_\eps=Q_\omega(\eps^2\Gamma).
\]
By Lemma \ref{k2:lem:Green-correction}, the bounds $|\nabla^j\Gamma|\le C_jd^{-j}$ for $j\ge1$ give, for
$0\le j\le2$,
\[
  |\nabla^jE_\eps|\le C\eps^4d^{-6-j}.
\]
Multiplication by the weighted factors gives
\begin{equation}\label{k2:eq:exterior-error}
  \norm{E_\eps}_{C^{2,\alpha}_{\delta-4}(\{d\ge2r_\eps\})}
  \le C\eps^4r_\eps^{-2-\delta}
  \le Cr_\eps^{4-\delta},
\end{equation}
because $\eps^4\le r_\eps^6$ when $\beta<2/3$.

On $\{r_\eps\le d\le2r_\eps\}$, equation \eqref{k2:eq:Aeps-annulus} gives
\[
  \omega_\eps
  =\omega+\eps^2\ddbar\Gamma
   -\eps^2\ddbar(\gamma_2\Psi).
\]
Since $|\nabla^j(\gamma_2\Psi)|\le Cr_\eps^{1-j}$,
\[
  \eps^2|\nabla^jL_\omega(\gamma_2\Psi)|
  \le C\eps^2r_\eps^{-3-j}
  \quad(0\le j\le2).
\]
The nonlinear remainder is at most $C\eps^4r_\eps^{-6-j}$.  Therefore, using $\eps^2\le r_\eps^3$, we have
\begin{equation}\label{k2:eq:outer-error}
  \norm{E_\eps}_{C^{2,\alpha}_{\delta-4}}
  \le C\left(
    \eps^2r_\eps^{1-\delta}
    +\eps^4r_\eps^{-2-\delta}
  \right)
  \le Cr_\eps^{4-\delta}.
\end{equation}

On $\{2\eps\le d\le r_\eps\}$, Lemma~\ref{k2:lem:refined-inner} and the boundedness of $S(\omega)$ and $h_S$, together with Definition~\ref{def:weighted-holder}, give
\[
  \norm{E_\eps}
  _{C^{2,\alpha}_{\delta-4}(\{2\eps\le d\le r_\eps\})}
  \le C\sup_{2\eps\le d\le r_\eps}
  \left(d^{4-\delta}+\eps^2d^{1-\delta}\right)
  \le Cr_\eps^{4-\delta}.
\]
Here the second term is bounded by the right-hand side because $\eps^2\le r_\eps^3$.

On $\{d\le2\eps\}$, Proposition~\ref{k2:prop:approximate-metric} gives $\eps^{-2}\omega_\eps=\omega_2+O(\eps)$ in the standard $C^{6,\alpha}$ norm on each fixed blow-up chart.  Since $\omega_2$ is scalar-flat and $\tau\simeq\eps$, we have
\begin{equation}\label{k2:eq:blowup-error}
  \norm{E_\eps}_{C^{2,\alpha}_{\delta-4}(\{d\le2\eps\})}
  \le C\eps^{3-\delta}
  \le Cr_\eps^{4-\delta}.
\end{equation}
The last inequality follows from $3-\delta>\frac23(4-\delta)>\beta(4-\delta)$ for $-1<\delta<0$. Combining the four regions proves \eqref{k2:eq:scalar-error}.

Finally,
\[
  r_\eps^{4-\delta}=\eps^{\beta(4-\delta)}\to0,
\]
and
\[
  \eps^\delta r_\eps^{4-\delta}
  =\eps^{4\beta+(1-\beta)\delta}\to0,
\]
because $4\beta+(1-\beta)\delta>5\beta-1>0$.  This proves
\eqref{k2:eq:error-limits}.
\end{proof}

\section{Solving the scalar curvature equation}
\label{sec:solving-equation}

The goal of this section is to prove Theorem \ref{thm:iterated-approximation}. We first solve the linearized equation in the weighted H\"older spaces.

\subsection{The linear estimate}
\label{sec:linear}

The following construction comes from \cite[Lemma~4.1]{Brown24}, applied to $\mathcal K=\ker L_\omega$ rather than to \(\mathcal K^*\). The same argument applies because restriction from $\mathcal K$ to the dense open set $X\setminus S$ is injective.
\begin{lemma}\label{lem:evaluation-points}
There are points $q_1,\ldots,q_N\in X\setminus S$ such that
\begin{equation}\label{eq:evaluation}
  \ev_q:\mathcal K\longrightarrow\R^N,
  \qquad
  u\longmapsto(u(q_1),\ldots,u(q_N))
\end{equation}
is an isomorphism.
\end{lemma}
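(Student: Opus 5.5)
The argument is elementary linear algebra, combined with two facts: elements of $\mathcal K$ are real-analytic, or at least satisfy unique continuation, and $X\setminus S$ is dense and open. Since $\mathcal K=\ker L_\omega$ is finite dimensional of dimension $N$, the plan is to choose the points $q_1,\dots,q_N$ one at a time, keeping the evaluation map injective on a growing quotient.

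\textbf{Step 1: the restriction map is injective.} If $u\in\mathcal K$ vanishes on $X\setminus S$, then $u\equiv0$, because $u$ is continuous (indeed smooth, by elliptic regularity) and $X\setminus S$ is dense; the latter holds since $S$ has codimension at least two. So no further input is needed here.

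\textbf{Step 2: inductive choice of points.} Suppose $q_1,\dots,q_m\in X\setminus S$ have been chosen, with $m<N$, so that the map
\[
\ev_{(q_1,\ldots,q_m)}:\mathcal K\to\R^m
\]
is surjective. Its kernel $\mathcal K_m$ then has dimension $N-m>0$. Take any nonzero $u\in\mathcal K_m$. By Step 1 there is a point $q_{m+1}\in X\setminus S$ with $u(q_{m+1})\neq0$.

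For the enlarged map $\mathcal K\to\R^{m+1}$, consider its image $V$. Projecting $V$ onto the first $m$ coordinates is onto $\R^m$, and $V$ also contains the vector $(0,\dots,0,u(q_{m+1}))$, which is nonzero. Hence $V=\R^{m+1}$, so the enlarged map is surjective.

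After $N$ steps, $\ev_q:\mathcal K\to\R^N$ is a surjection between spaces of equal dimension $N$, and is therefore an isomorphism. The base case $m=0$ is trivial.

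\textbf{Main obstacle.} There is essentially none. The only point to check is that the choice of points is independent of $\eps$. It is, because $\mathcal K$ is defined using the fixed metric $\omega$ on $X$. Since the $q_i$ lie in $X\setminus S$, they stay a fixed positive distance from $S$, and hence in the exterior region $\{d\ge 2r_\eps\}$ for all small $\eps$. This is exactly the region where $\omega_\eps$ equals, or is close to, $\omega$, which is what Brown's construction later uses.
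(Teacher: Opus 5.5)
Your proof is correct and follows the paper's approach. The paper simply cites the linear-algebra selection of evaluation points from Brown's Lemma~4.1 and notes that the only additional input is that restriction of $\mathcal K$ to the dense open set $X\setminus S$ is injective, which is exactly your Step~1, with your inductive Step~2 supplying the selection argument explicitly.
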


For a K\"ahler metric \(\Omega\) and real-valued functions \(\psi,\phi\), define
\begin{equation}\label{k2:eq:D-operator}
  \mathscr D_{\Omega,\psi}(\phi)
  :=
  g_\Omega^{i\bar j}
  \left(
    \nabla_i\psi\,\nabla_{\bar j}\phi
    +
    \nabla_i\phi\,\nabla_{\bar j}\psi
  \right)
  +
  \phi\Delta_\Omega\psi.
\end{equation}
Write the second-order part of \(L_\Omega\) as
\[
  A_\Omega\phi
  =
  -\Ric(\Omega)^{i\bar j}
   \nabla_i\nabla_{\bar j}\phi.
\]
Its formal adjoint is
\[
  A_\Omega^*\phi
  =
  -\nabla_{\bar j}\nabla_i
  \bigl(\Ric(\Omega)^{i\bar j}\phi\bigr).
\]
Using the K\"ahler contracted Bianchi identity, we obtain
\[
  A_\Omega^*\phi
  =
  A_\Omega\phi
  -
  g_\Omega^{i\bar j}
  \left(
    \nabla_iS(\Omega)\nabla_{\bar j}\phi
    +
    \nabla_i\phi\nabla_{\bar j}S(\Omega)
  \right)
  -
  \phi\Delta_\Omega S(\Omega).
\]
Since the bi-Laplacian is self-adjoint,
\begin{equation}\label{k2:eq:adjoint-difference}
  L_\Omega-L_\Omega^*
  =
  \mathscr D_{\Omega,S(\Omega)}.
\end{equation}
Thus $\mathscr D_{\Omega,S(\Omega)}$ is exactly the difference between $L_\Omega$ and \(L_\Omega^*\).

Recall that $\psi_1,\ldots,\psi_N$ is an $L^2(\omega)$-orthonormal basis of $\mathcal K^*$. Define
\begin{equation}\label{eq:case-operator}
 \mathcal P_\eps=
 \begin{cases}
  L_{\omega_\eps},& k>2,\\[3pt]
  L_{\omega_\eps}^*
  +\mathscr D_{\omega_\eps,
     \pi^*S(\omega)+\eps^2h_S},& k=2,
 \end{cases}
 \qquad
 \widetilde{\mathcal P}_\eps\phi
 =\mathcal P_\eps\phi-
  \sum_{j=1}^N\phi(q_j)\psi_j,
\end{equation}
Through the biholomorphism \(\pi:\widetilde X\setminus E\rightarrow X\setminus S\), we regard both $\mathcal P_\eps$ and $L_\omega$ as operators on $X\setminus S$.  For every compact set $K\Subset X\setminus S$ and every $m\ge0$,
\[
 \norm{(\mathcal P_\eps-L_\omega)\phi}_{C^{m,\alpha}(K)}
 \le c_{K,m}(\eps)
 \norm{\phi}_{C^{m+4,\alpha}(K)},
 \qquad
 c_{K,m}(\eps)\longrightarrow0.
\]
Indeed, when $k>2$, one has $\omega_\eps=\omega$ on $K$ for all sufficiently small $\eps$. When $k=2$,
\[
 \omega_\eps=\omega+\eps^2\ddbar\Gamma
 \longrightarrow\omega
 \qquad\text{in }C^\infty(K),
\]
and hence
\begin{equation}\label{eq:P-L*}
 \mathcal P_\eps
 \longrightarrow
 L_\omega^*+\mathscr D_{\omega,S(\omega)}
 =L_\omega.
\end{equation}
On a chart of scale $\rho\ge\eps$, let $\widehat\omega=\rho^{-2}\omega_\eps$.  Then
\begin{equation}\label{k2:eq:lower-order-scaling}
 \rho^4\mathscr D_{\omega_\eps,
   \pi^*S(\omega)+\eps^2h_S}(\phi)
 =
 O\bigl(
   \rho^3|\nabla_{\widehat\omega}\phi|
   +\rho^4|\phi|
 \bigr).
\end{equation}
Consequently, this additional first-order term converges to zero on every rescaled chart for which $\rho\to0$.

\begin{proposition} \label{prop:linear-inverse}
For all sufficiently small $\eps$, the operator
\[
  \widetilde{\mathcal P}_\eps:
  C^{4,\alpha}_\delta(\widetilde X)\longrightarrow
  C^{0,\alpha}_{\delta-4}(\widetilde X)
\]
is an isomorphism in the following cases:
\begin{enumerate}[label=\textup{(\roman*)}]
\item if $k>2$ and $4-2k<\delta<0$, then
\begin{equation}\label{gt:eq:inverse-bound}
  \norm{\phi}_{C^{4,\alpha}_\delta}
  \le C\norm{\widetilde{\mathcal P}_\eps\phi}_{C^{0,\alpha}_{\delta-4}};
\end{equation}
\item if $k=2$ and $-1<\delta<0$, then
\begin{equation}\label{k2:eq:linear-inverse}
  \norm{\phi}_{C^{4,\alpha}_\delta}
  \le C\eps^\delta
  \norm{\widetilde{\mathcal P}_\eps\phi}_{C^{0,\alpha}_{\delta-4}}.
\end{equation}
\end{enumerate}
\end{proposition}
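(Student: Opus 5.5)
The plan is the usual blow-up/contradiction argument for uniform weighted estimates, following \cite{SS20} and \cite{Brown24}. First, surjectivity. For fixed \(\eps\), \(\widetilde{\mathcal P}_\eps\) is a finite-rank perturbation of an elliptic fourth-order operator on the compact manifold \(\widetilde X\). On a fixed compact manifold the weighted spaces are equivalent to ordinary Hölder spaces, with constants depending on \(\eps\). Hence \(\widetilde{\mathcal P}_\eps\) is Fredholm of index zero: \(L_{\omega_\eps}\) has index zero, and the \(k=2\) operator differs from \(L^*_{\omega_\eps}\) by a lower-order term. So it suffices to prove the a priori estimates \eqref{gt:eq:inverse-bound} and \eqref{k2:eq:linear-inverse}, which give injectivity and hence invertibility. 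Local Schauder estimates on the rescaled balls \(B_x\) with metric \(\tau^{-2}\omega_\eps\) are uniform, because in each region this metric is uniformly equivalent to \(\omega\), to a rescaled product model, or to \(\omega_k\) (Propositions \ref{gt:prop:approximate-metric}, \ref{k2:prop:approximate-metric}). They give
\[
\norm{\phi}_{C^{4,\alpha}_\delta}\le C\bigl(\norm{\widetilde{\mathcal P}_\eps\phi}_{C^{0,\alpha}_{\delta-4}}+\norm{\phi}_{C^0_\delta}\bigr).
\]
Here the evaluation term \(\sum\phi(q_j)\psi_j\) is harmless: it is supported away from \(S\) in the weight sense, since \(\psi_j\) is smooth and \(\tau\simeq1\) near the \(q_j\). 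For \(k=2\) the first-order term is absorbed using \eqref{k2:eq:lower-order-scaling}.

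For (i), I would argue by contradiction. Suppose \(\eps_i\to0\) and \(\phi_i\) satisfy \(\norm{\phi_i}_{C^{4,\alpha}_\delta}=1\) and \(\norm{\widetilde{\mathcal P}_{\eps_i}\phi_i}\to0\). By the Schauder bound, \(\sup\tau^{-\delta}|\phi_i|\ge c>0\), attained at points \(x_i\). There are three cases according to the behaviour of \(\tau(x_i)\).
\begin{enumerate}[label=\textup{(\alph*)}]
\item \emph{\(d(x_i)\not\to0\).} Then \(\phi_i\to\phi_\infty\) in \(C^4_{\mathrm{loc}}(X\setminus S)\) with \(|\phi_\infty|\le Cd^\delta\). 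The limit satisfies \(L_\omega\phi_\infty=\sum\phi_\infty(q_j)\psi_j\). Since \(\delta>4-2k\), the singularity is removable across the codimension-\(2k\) set \(S\), so \(\phi_\infty\) is a distributional solution on \(X\). Pairing with \(\mathcal K^*\) kills the right-hand side, so \(\phi_\infty\in\mathcal K\) and \(\phi_\infty(q_j)=0\). Lemma \ref{lem:evaluation-points} then forces \(\phi_\infty=0\), a contradiction.
\item \emph{\(\tau(x_i)/\eps_i\) bounded.} Rescaling by \(\eps_i^{-2}\) yields a limit \(\phi_\infty\) on \(Y=\Bl_0\C^k\times\C^{n-k}\) with \(L_{\omega_k}\phi_\infty=0\) and \(|\phi_\infty|\le C(1+|Z|)^{\delta}\). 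Note that the weight must be taken in the normal direction only. A Liouville theorem for the scalar-flat product \(\omega_k\) at rates \(\delta\in(4-2k,0)\) gives \(\phi_\infty=0\). I would prove this by Fourier transform in the \(\C^{n-k}\) variables, reducing to the Burns--Simanca factor, where the rates \(4-2k\) and \(0\) are consecutive indicial roots; this is \cite{SS20}.
\item \emph{\(\eps_i\ll\tau(x_i)\ll1\).} The limit is a function on \((\C^k\setminus0)\times\C^{n-k}\) satisfying \(\Delta_0^2\phi=0\) with \(|\phi|\le C|Z|^\delta\), and it is excluded by the same indicial-root argument.
\end{enumerate}

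For (ii), the loss \(\eps^\delta\) arises because in codimension two the interval \((-1,0)\) straddles no kernel-free gap. The Burns model has the bounded kernel element coming from the constant (rate \(0\)), as well as \(\log|\zeta|\)-type behaviour. I would run the same contradiction argument with the normalization \(\eps^{-\delta}\norm{\phi}\). Alternatively, I would first prove the estimate in \(C^{4,\alpha}_0\) with constant \(C\), and then use \(\tau^{-\delta}\le\eps^{-\delta}\) together with \(\norm{f}_{C^{0,\alpha}_{-4}}\le\norm{f}_{C^{0,\alpha}_{\delta-4}}\) for \(\delta<0\); this yields \eqref{k2:eq:linear-inverse}. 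The weight-\(0\) estimate is proved by contradiction as above. Here the choice \(\mathcal P_\eps=L^*_{\omega_\eps}+\mathscr D\) is designed so that the adjoint's kernel behaves correctly: by \eqref{eq:P-L*} the exterior limit is \(L_\omega\), and the bubble limit is \(L_{\omega_2}\) with bounded solutions, that is, constants. These must be ruled out by matching across the neck.

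The main obstacle is this codimension-two matching step. I would first attempt to show that a bounded bubble limit equal to a nonzero constant forces, via the Green function expansion of Lemma \ref{k2:lem:Green-correction}, a nonzero \(\log d\) coefficient in the exterior. That would contradict the weight, and the defining point conditions at the \(q_j\) would then finish the argument.
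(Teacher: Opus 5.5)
Your index-zero reduction, the weighted Schauder estimate, and the three-case blow-up argument for (i) are fine. They agree with the paper, which runs \cite[Proposition~9]{SS20} with the finite-rank term absorbed into the compact-limit lemma. The gap is in (ii).

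\textbf{The ``alternative'' route.} This route is not available. There is no $\eps$-uniform estimate $\norm{\phi}_{C^{4,\alpha}_0}\le C\norm{\widetilde{\mathcal P}_\eps\phi}_{C^{0,\alpha}_{-4}}$, because $0$ is an indicial root of $\Delta_0^2$ in the four normal directions. A smoothing of $\chi(d)\log\max\{d,\eps\}^2$ has $C^0_0$-norm $\simeq\abs{\log\eps}$, but its image is $O(1)$ in $C^{0,\alpha}_{-4}$. Your comparison inequalities are also reversed: for $\delta<0$ one has $\norm{f}_{C^{0,\alpha}_{\delta-4}}\le\norm{f}_{C^{0,\alpha}_{-4}}\le\eps^\delta\norm{f}_{C^{0,\alpha}_{\delta-4}}$.

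\textbf{Where the difficulty actually lies.} You place it in the wrong limit. For $\delta<0$ the bubble is harmless: constants are not in $C^{4,\alpha}_\delta(Y)$, and Lemma~\ref{lem:Burns-product-kernel} holds for $k=2$. What fails are the compact and neck limits.
\begin{itemize}
\item Your removability step in case (a) needs $\delta>4-2k=0$. Indeed, let $\kappa\in\mathcal K$ be fixed by $\kappa(q_j)=\langle h_S,\psi_j\rangle_{L^2}-\Gamma(q_j)$ (Lemma~\ref{lem:evaluation-points}). Then $u=\Gamma+\kappa$ is a nonzero solution of $L_\omega u=\sum_ju(q_j)\psi_j$ on $X\setminus S$.
\item For $n\ge3$, take $f\in C^\infty_c(\C^{n-2})$. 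The biharmonic potential of $f(w)\,\delta_{\{z=0\}}$ is a nonzero biharmonic function on $(\C^2\setminus\{0\})\times\C^{n-2}$. It behaves like $-c\,f(w)\log\abs{z}^2$ near $\{z=0\}$ and decays at infinity.
\end{itemize}
Since $\abs{\log d}\le Cd^\delta$, both functions lie in every negative weight. So a nonzero $\log d$ coefficient does \emph{not} contradict the weight. The interval $(-1,0)$ is a genuine gap between indicial roots, but the logarithmic Green potential of $S$ sits at its edge. Gluing $\Gamma+\kappa$ to a constant at scale $\eps$ produces $\phi$ with $\norm{\phi}_{C^0_\delta}\simeq1$ and $\norm{\widetilde{\mathcal P}_\eps\phi}_{C^{0,\alpha}_{\delta-4}}\lesssim\eps^{-\delta}$. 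This, not the constants on the bubble, is what forces the loss $\eps^\delta$.

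\textbf{The missing idea.} Set $F_i=\widetilde{\mathcal P}_{\eps_i}\phi_i$. You must use the hypothesis $\eps_i^\delta\norm{F_i}_{C^{0,\alpha}_{\delta-4}}\to0$ quantitatively, to exclude sources supported on $S$.
\begin{itemize}
\item Normal annuli $\{r<d<2r\}$ have volume $\lesssim r^4$, so $\int_{\widetilde X}\tau^{\delta-4}\omega_\eps^n\le C\eps^\delta$. Hence $\int F_i\chi\,\omega_{\eps_i}^n\to0$ for every fixed test function $\chi$.
\item $\mathcal P_\eps^*$ is $L_{\omega_\eps}$, which kills constants, plus a small lower-order term. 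Hence $\abs{\mathcal P_\eps^*\chi}\le C\tau^{-3}$, and $\phi_i\mathcal P_{\eps_i}^*\chi=O(\tau^{\delta-3})$ puts no mass on $S$ in the limit, precisely because $\delta>-1$.
\item Consequently the compact limit satisfies $L_\omega u=\sum_ju(q_j)\psi_j$ distributionally on all of $X$. So $u\in\mathcal K$ with $\ev_q(u)=0$, i.e.\ $u=0$.
\item The rescaled version of the same computation has source bounded by $(\eps_i/\rho_i)^\delta\norm{F_i}\to0$. It shows the neck limit is biharmonic on all of $\C^n$, hence bounded, hence constant, and zero by $\abs{v}\le C\abs{z}^\delta$.
\end{itemize}
These are Lemmas~\ref{k2:lem:compact-limit} and \ref{k2:lem:matched-neck}. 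Without them your contradiction argument can converge to the logarithmic solutions above, and no matching at the bubble rules them out.
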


To prove this estimate, we need several auxiliary lemmas. The first is the weighted Schauder estimate.
\begin{lemma} \label{lem:Schauder}
There is a constant $C$, independent of $\eps$, such that
\begin{equation}
\label{eq:Schauder}
  \norm{\phi}_{C^{4,\alpha}_\delta}
  \le C\left(
    \norm{\phi}_{C^0_\delta}
    +\norm{\widetilde{\mathcal P}_\eps\phi}_{C^{0,\alpha}_{\delta-4}}
  \right).
\end{equation}
\end{lemma}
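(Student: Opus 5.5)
The estimate will be proved by working on the uniformly scaled balls of Definition~\ref{def:weighted-holder} and applying interior Schauder estimates there. Fix $x\in\widetilde X$ and let $\widehat\omega=\tau(x)^{-2}\omega_\eps$ on $B_x$ and on a concentric ball $B'_x$ of twice the radius, with the constant $c$ chosen small enough that $\tau$ stays comparable to $\tau(x)$ on $B'_x$. Since $L_{\omega_\eps}=-\Delta^2_{\omega_\eps}-\langle\Ric,\nabla^2\cdot\rangle$ and all terms rescale homogeneously, we have $\tau(x)^4 L_{\omega_\eps}=L_{\widehat\omega}$. The idea is that, in each of the four regions \eqref{gt:eq:regions}, $\widehat\omega$ is uniformly $C^{k,\alpha}$-close to a fixed model metric, with bounds independent of $x$ and $\eps$:
\begin{itemize}
\item on $\{d\ge 2r_\eps\}$ it is $\omega$ itself (for $k=2$, $\omega+\eps^2\ddbar\Gamma$, which is a small perturbation by the proof of Proposition~\ref{k2:prop:approximate-metric}), rescaled when $d<1$;
\item on the annuli it is the rescaled Burns--Simanca product metric $R^{-2}\omega_k$, which is uniformly equivalent to the Euclidean metric with bounded derivatives at scale $R$, by \eqref{gt:eq:BS-metric-decay} and \eqref{k2:eq:Burns-decay};
\item on the blowup region it is $\omega_k$ on a compact chart, by Proposition~\ref{gt:prop:approximate-metric}(iv) and the corresponding statement for $k=2$.
\end{itemize}
Thus $\tau(x)^4 L_{\omega_\eps}$ is a uniformly elliptic fourth-order operator on $B'_x$ whose coefficients are bounded in $C^{0,\alpha}$ uniformly in $x$ and $\eps$. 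The required coefficient bounds (curvature and its derivatives of $\widehat\omega$ bounded) follow from the regional estimates already established, in particular \eqref{gt:eq:inner-potential-error}.

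Next we treat the extra terms in $\widetilde{\mathcal P}_\eps$. For $k=2$, the adjoint $L^*_{\omega_\eps}$ has the same principal part as $L_{\omega_\eps}$. By \eqref{k2:eq:adjoint-difference} it differs from $L_{\omega_\eps}$ by the first-order operator $\mathscr D_{\omega_\eps,S(\omega_\eps)}$, and $\mathcal P_\eps$ adds $\mathscr D_{\omega_\eps,\pi^*S(\omega)+\eps^2h_S}$. The rescaled scalar curvature $\tau^2 S(\omega_\eps)$ is uniformly bounded in $C^{2,\alpha}$ on $B'_x$ by Lemma~\ref{k2:lem:refined-inner} and the blowup-region estimate, and \eqref{k2:eq:lower-order-scaling} handles the added $\mathscr D$ term. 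Hence $\tau(x)^4\mathcal P_\eps$ still has uniformly controlled lower-order coefficients. The finite-rank term $\sum_j\phi(q_j)\psi_j$ is treated as a right-hand side. Its weighted norm is bounded by $C\max_j|\phi(q_j)|\le C\norm{\phi}_{C^0_\delta}$: the $\psi_j$ are smooth on $X$ and pulled back to $\widetilde X$, so $\norm{\psi_j}_{C^{0,\alpha}_{\delta-4}}\le C$ because $\delta-4<0$ and $\tau\le1$, and $\tau(q_j)=1$ for small $\eps$ since $q_j\notin S$.

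With these bounds, standard interior Schauder estimates for uniformly elliptic fourth-order operators give
\[
\norm{\phi}_{C^{4,\alpha}(B_x,\widehat\omega)}\le C\bigl(\norm{\phi}_{C^0(B'_x)}+\norm{\tau(x)^4\mathcal P_\eps\phi}_{C^{0,\alpha}(B'_x,\widehat\omega)}\bigr).
\]
Multiplying by $\tau(x)^{-\delta}$ and using $\tau\simeq\tau(x)$ on $B'_x$ converts the right-hand side into $\norm{\phi}_{C^0_\delta}+\norm{\mathcal P_\eps\phi}_{C^{0,\alpha}_{\delta-4}}$. Taking the supremum over $x$ and adding the finite-rank term proves \eqref{eq:Schauder}.

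The main obstacle is the uniformity of the coefficient bounds across the transition regions, especially the outer annulus and, for $k=2$, the lower-order terms built from $S(\omega_\eps)$, which blow up like $\eps^2 d^{-3}$. My plan there is to check that after multiplying by $\tau^2$ these terms are of size $\eps^2\rho^{-1}\le\eps^{2-\beta}\to0$ in the scaled norms. This follows from Lemma~\ref{k2:lem:refined-inner}, which bounds the scaled scalar curvature by $C(\rho^2+\eps^2\rho^{-1})$, so the lower-order coefficients are small perturbations and the Schauder constant stays independent of $\eps$.
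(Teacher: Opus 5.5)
Your proposal is correct and follows essentially the same route as the paper. You rescale each weighted ball by $\tau(x)^{-2}$ and use the regional estimates (and, for $k=2$, \eqref{k2:eq:lower-order-scaling} together with $C^{2,\alpha}$ control of $S(\omega_\eps)$ for the lower-order terms of $L^*_{\omega_\eps}$) to get uniform ellipticity and coefficient bounds, apply interior Schauder, and absorb the finite-rank term via $|\phi(q_j)|\le C\norm{\phi}_{C^0_\delta}$.

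There are two harmless slips. First, $\tau(q_j)=\min\{1,d(q_j)\}$ is a fixed positive constant rather than necessarily $1$. Second, in the inner annulus the rescaled scalar-curvature coefficient is bounded by $C(\rho^2+\eps^2\rho^{-1})\le C(\rho^2+\eps)$, using $\rho\ge2\eps$, and not by $\eps^{2-\beta}$; only uniform boundedness is needed there anyway.
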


\begin{proof}
Let \(x\in\widetilde X\) and let \(\rho=\tau(x)\).  On concentric balls of radii comparable to \(\rho\), rescale the metric by \(\rho^{-2}\) and the operator by \(\rho^4\).  The regional estimates in Propositions~\ref{gt:prop:approximate-metric} and \ref{k2:prop:approximate-metric} imply that the rescaled operators are uniformly strongly elliptic with uniformly bounded \(C^{0,\alpha}\)-coefficients.  In codimension two, the same statement for the additional first-order term follows from \eqref{k2:eq:lower-order-scaling}.  The interior fourth-order Schauder estimate, together with \(\tau\simeq\rho\) on these balls, therefore gives
\[
  \norm{\phi}_{C^{4,\alpha}_\delta}
  \le C\left(
    \norm{\phi}_{C^0_\delta}
    +\norm{\mathcal P_\eps\phi}_{C^{0,\alpha}_{\delta-4}}
  \right).
\]
This is also the estimate used in the proof of \cite[Proposition~9]{SS20} when \(k>2\). Finally, since the \(q_j\) lie a fixed positive distance from \(S\) and the \(\psi_j\) are smooth,
\[
 \norm{\sum_{j=1}^N\phi(q_j)\psi_j}_{C^{0,\alpha}_{\delta-4}}
 \le C\norm{\phi}_{C^0_\delta}.
\]
Using \eqref{eq:case-operator} proves \eqref{eq:Schauder}.
\end{proof}

The following lemma is exactly \cite[Lemma~11]{SS20}, which applies for all \(k\ge2\).
\begin{lemma}
\label{lem:Burns-product-kernel}
Let $k\ge2$, let $Y=\Bl_0\C^k\times\C^{n-k}$ carry the scalar-flat product metric $\omega_k$, and let $\delta<0$.  If $v\in C^{4,\alpha}_\delta(Y)$ and \(L_{\omega_k}v=0\), then $v=0$.
\end{lemma}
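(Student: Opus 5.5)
The plan is to prove the Liouville-type statement of Lemma~\ref{lem:Burns-product-kernel} by a two-stage reduction: first remove the Euclidean factor $\C^{n-k}$ using translation invariance and a Fourier or difference-quotient argument, and then treat the resulting problem on $\Bl_0\C^k$ with the asymptotically flat model $\eta_k$.

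\textbf{Reduction to $v$ independent of $W$.} The weighted space on $Y$ uses the weight $\tau\simeq 1+|\zeta|$, measured only in the $\Bl_0\C^k$ factor; in particular it imposes no decay in the $W$-directions. Since $\omega_k=\eta_k+\omega_0$ is invariant under translations $W\mapsto W+b$, the operator $L_{\omega_k}$ commutes with $\partial_W$ and $\partial_{\bar W}$. Weighted Schauder estimates on the product give uniform bounds $|\nabla^j v|\le C\tau^{\delta-j}$, so every $W$-derivative of $v$ is again a bounded element of $C^{4,\alpha}_\delta(Y)$ in the kernel.

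We then take the partial Fourier transform in $W$, in the sense of tempered distributions in $W$ with values in weighted function spaces on $\Bl_0\C^k$. The kernel equation becomes
\[
\bigl(L_{\eta_k}-|\xi|^2\text{-terms}\bigr)\hat v(\cdot,\xi)=0.
\]
More concretely, $L_{\omega_k}=-(\Delta_{\eta}+\Delta_W)^2$, because $\Ric(\omega_k)=\Ric(\eta_k)$ lives on the first factor and the lower-order term only involves $\Delta_\eta$-type derivatives. For $\xi\ne0$ the operator $-(\Delta_\eta-|\xi|^2)^2$ plus the Ricci term is invertible on decaying functions: we pair with $\hat v$, integrate by parts, and use that the Ricci term is a compactly supported-like $O(|\zeta|^{-2k})$ perturbation, combined with exponential decay of solutions at $|\xi|>0$. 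The conclusion is that the distribution $\hat v$ is supported on $\{\xi=0\}$. Hence $v$ is a polynomial in $W$, and boundedness in $W$ forces $v=v(\zeta)$.

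\textbf{The $\Bl_0\C^k$ factor.} Now $v\in C^{4,\alpha}_\delta(\Bl_0\C^k)$ with $\delta<0$ satisfies $L_{\eta_k}v=0$, so $v$ decays at infinity. Because $\eta_k$ is scalar-flat Kähler, we have $L_{\eta_k}=-\mathcal D^*\mathcal D$, where $\mathcal D v=\bar\partial\nabla^{1,0}v$ is the Lichnerowicz operator. The decay rates $|\nabla^jv|\le C|\zeta|^{\delta-j}$, with $\delta<0$ and real dimension $2k\ge4$, make the boundary terms $|\zeta|^{2k-1}\cdot|\zeta|^{2\delta-3}$ vanish when $\delta$ is slightly negative. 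This yields
\[
\int|\mathcal Dv|^2=0,
\]
so $\nabla^{1,0}v$ is a holomorphic vector field decaying at infinity. Such a field vanishes identically on $\C^k\setminus B$ by Hartogs and Liouville, hence $v$ is constant, and the decay then gives $v=0$.

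If $\delta$ is too negative for the integration by parts, we first improve decay using the indicial roots of $\Delta_0^2$ on $\C^k$. Alternatively, we can reduce to $\delta$ close to $0$, since $C^{4,\alpha}_\delta\subset C^{4,\alpha}_{\delta'}$ for $\delta<\delta'<0$; this settles the point.

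\textbf{Main obstacle.} The main obstacle is the Fourier step. For $\xi\ne0$ one has to show that the coupled operator has trivial kernel among bounded functions. I would handle it by the same integration by parts: with $\phi=e^{i\langle\xi,W\rangle}f(\zeta)$, the operator $L_{\omega_k}=-\mathcal D^*\mathcal D$ still holds on the product. We integrate over $\Bl_0\C^k\times$ (a torus in $W$ after periodization, or against test functions). This gives $\mathcal D\phi=0$, which includes $\partial_{\bar W}\partial_W$-terms forcing $|\xi|^2f=0$.
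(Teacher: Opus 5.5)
The paper gives no argument of its own here; it simply invokes \cite[Lemma~11]{SS20}. Your two-stage plan is a sensible self-contained route: first remove the $W$-dependence by Fourier analysis, then run a Lichnerowicz integration by parts on $\Bl_0\C^k$. As written, however, the second stage fails when $k\ge3$. The boundary terms on $\{|\zeta|=R\}$ have size $R^{2k-1}\cdot R^{2\delta-3}=R^{2k-4+2\delta}$, which tends to zero only if $\delta<2-k$. For $k=2$ this is just $\delta<0$. For $k\ge3$ and $\delta\in[2-k,0)$ it does not hold, and that range contains the weights $\delta\in(4-2k,0)$ near $0$ used in Proposition~\ref{prop:linear-inverse}. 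Your alternative of passing to $\delta$ close to $0$ via $C^{4,\alpha}_\delta\subset C^{4,\alpha}_{\delta'}$ runs the wrong way, since it weakens the decay. The indicial-root improvement you mention is the right tool, but it is needed precisely when $\delta$ is \emph{not} negative enough. Concretely, $\eta_k-\omega_0=O(|\zeta|^{2-2k})$ with derivative bounds, so $\Delta_0^2v=(L_{\eta_k}+\Delta_0^2)v=O(|\zeta|^{\delta-2-2k})$ near infinity. The indicial roots of $\Delta_0^2$ on $\R^{2k}$ are $j$, $j+2$, $2-2k-j$, $4-2k-j$ for $j\ge0$, and none lies in $(4-2k,0)$. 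Iterating therefore gives $|\nabla^jv|=O(|\zeta|^{4-2k-j})$. The boundary terms are then $O(R^{4-2k})\to0$; note $4-2k<2-k$ exactly because $k>2$. After that, your conclusion via the decaying holomorphic field $\nabla^{1,0}v$, Hartogs and Liouville is fine.

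The Fourier stage is also only heuristic. Because $v$ is bounded but does not decay in $W$, $\hat v$ is merely a distribution in $\xi$. You cannot pair $\hat v(\cdot,\xi)$ with itself at a fixed $\xi$, and periodizing in $W$ is not available. A rigorous version goes by duality, in three steps.

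First, $W$-derivatives gain decay. The function $u=\partial_W^\beta v$ (real derivatives in $W$) lies in $C^{4,\alpha}_{\delta-|\beta|}$, not merely in $C^{4,\alpha}_\delta$. So for $|\beta|\ge k$ the map $W\mapsto u(\cdot,W)$ is bounded into $H^4(\Bl_0\C^k,\eta_k)$.

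Second, for $\xi\neq0$ consider $\hat L_\xi=L_{\eta_k}+2|\xi|^2\Delta_{\eta_k}-|\xi|^4$ (up to normalizing constants). Since $L_{\eta_k}=-\mathcal D^*\mathcal D$, it satisfies $-\langle\hat L_\xi f,f\rangle=\|\mathcal Df\|^2+2|\xi|^2\|\nabla f\|^2+|\xi|^4\|f\|^2$. Hence $\hat L_\xi$ is invertible $H^4\to L^2$ with inverse smooth in $\xi$. No perturbative treatment of the Ricci term and no exponential decay are needed.

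Third, take $g\in C_c^\infty(\Bl_0\C^k)$ and $\chi\in C_c^\infty(\R^{2n-2k}\setminus\{0\})$. The function $\Phi=\mathcal F_\xi^{-1}[\chi\,\hat L_\xi^{-1}g]$ is Schwartz in $W$ with values in $H^4$ and satisfies $L_{\omega_k}\Phi=g\otimes\check\chi$. Integrating by parts against $u$ gives $\int_Y u\,\overline{g\otimes\check\chi}\,\omega_k^n=0$. Hence $\hat u$ is supported in $\{\xi=0\}$, so $u$ is a bounded polynomial in $W$ and therefore $W$-independent. Descending in $|\beta|$ then shows $v=v(\zeta)$, since a bounded function whose first $W$-derivatives are independent of $W$ is itself independent of $W$.
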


For \(k>2\), the possible limit on \(X\setminus S\) is eliminated by the following removability argument.
\begin{lemma}\label{gt:lem:compact-model}
Let \(k>2\). Suppose $u\in C^{4,\alpha}_\delta(X\setminus S)$, $\delta>4-2k$, and
\begin{equation}\label{gt:eq:compact-model-equation}
  L_\omega u-\sum_{j=1}^Nu(q_j)\psi_j=0
  \quad\text{on }X\setminus S.
\end{equation}
Then $u=0$.
\end{lemma}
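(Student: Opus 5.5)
The plan is a removable-singularity argument followed by the choice of evaluation points.

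\emph{Step 1: extend $u$ across $S$.} By definition of $C^{4,\alpha}_\delta(X\setminus S)$ (weight $\tau\simeq d$ near $S$),
\[
|\nabla^ju|\le Cd^{\delta-j},\qquad 0\le j\le4 .
\]
Set $f:=\sum_j u(q_j)\psi_j$, which is smooth on $X$. We show that $L_\omega u=f$ holds distributionally on all of $X$ once $u$ is extended across $S$.
\begin{itemize}
\item Since $\delta>4-2k>-2k$, the function $u$ is in $L^1_{\mathrm{loc}}$, so it defines a distribution on $X$.
\item Take a cutoff $\chi_\eta=\gamma(\eta^{-1}d)$ and a test function $\varphi\in C^\infty(X)$. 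Then
\[
\langle u,L_\omega^*\varphi\rangle=\lim_{\eta\to0}\int u\,L_\omega^*\bigl((1-\chi_\eta)\varphi\bigr)+\lim_{\eta\to0}\int u\,L_\omega^*(\chi_\eta\varphi).
\]
\item For the first term, integrate by parts on $\{d\ge\eta\}$. This gives $\int f(1-\chi_\eta)\varphi\to\int f\varphi$.
\item The second term is bounded by
\[
\sum_{j\le4}\int_{d\le2\eta}|u|\,\eta^{-(4-j)}\lesssim \eta^{\delta}\,\eta^{-4}\,\eta^{2k}=\eta^{2k-4+\delta}\to0,
\]
because $\operatorname{Vol}\{d\le2\eta\}\simeq\eta^{2k}$ and $\delta>4-2k$.
\end{itemize}
Hence $L_\omega u=f$ weakly on $X$ with $f$ smooth. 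Elliptic regularity for the fourth-order operator $L_\omega$ then gives $u\in C^\infty(X)$.

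\emph{Step 2: $u\in\mathcal K$.} The range of $L_\omega$ is $L^2$-orthogonal to $\mathcal K^*=\ker L_\omega^*$, since $L_\omega$ is Fredholm of index zero. Pairing with each $\psi_i$ and using $L^2$-orthonormality gives
\[
0=\langle L_\omega u,\psi_i\rangle=\langle f,\psi_i\rangle=u(q_i)\quad\text{for all }i.
\]
So $f=0$, hence $L_\omega u=0$, that is, $u\in\mathcal K$.

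\emph{Step 3: conclude.} We have $u\in\mathcal K$ and $\ev_q(u)=0$. By Lemma~\ref{lem:evaluation-points}, $\ev_q$ is injective on $\mathcal K$, so $u=0$.

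The main obstacle is the justification of Step 1, namely controlling the boundary terms near $S$. This is exactly where $\delta>4-2k$ is needed: it makes $\eta^{2k-4+\delta}\to0$. A dual way to see it is that $C^{4,\alpha}_\delta$ with $\delta>4-2k$ excludes the fundamental-solution behaviour $d^{4-2k}$ of $\Delta^2$ in the normal directions. So no $\delta_S$-type source term can appear, and the singularity at $S$ is removable.
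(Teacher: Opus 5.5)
Your proof is correct and follows essentially the paper's argument: the condition $\delta>4-2k$ makes the equation hold distributionally across $S$, so $u$ is smooth; pairing with $\psi_\ell$ forces $u(q_\ell)=0$; and injectivity of $\ev_q$ gives $u=0$. You also write out the cutoff computation for the removability step, which the paper only cites from Seyyedali--Sz\'ekelyhidi. One small point in that computation: the pointwise bound $|u|\lesssim\eta^\delta$ on $\{d\le2\eta\}$ is false for $\delta<0$, so estimate $\int_{\{d\le2\eta\}}|u|\lesssim\int_0^{2\eta}s^{\delta+2k-1}\,ds\simeq\eta^{\delta+2k}$ instead, using $\delta>-2k$; this gives the same bound $\eta^{2k-4+\delta}\to0$.
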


\begin{proof}
As in \cite[Lemma~10]{SS20}, the restriction \(\delta>4-2k\) ensures that \eqref{gt:eq:compact-model-equation} holds distributionally on all of \(X\). Hence \(u\) extends smoothly across \(S\). Pairing \(L_\omega u\) with \(\psi_\ell\in\ker L_\omega^*\) gives
\[
  u(q_\ell)
  =\sum_{j=1}^N u(q_j)
    \langle\psi_j,\psi_\ell\rangle_{L^2}
  =\langle L_\omega u,\psi_\ell\rangle_{L^2}
  =0.
\]
Thus \(L_\omega u=0\) and \(\ev_q(u)=0\).  Since \(\ev_q\) is injective, \(u=0\).
\end{proof}

The following weighted Liouville theorem on the punctured Euclidean product controls the corresponding neck rescaling, which is \cite[Lemma~12]{SS20}.

\begin{lemma}\label{gt:lem:euclidean-model}
Let $Z=(\C^k\setminus\{0\})\times\C^{n-k}$ carry the Euclidean product metric. If $v\in C^{4,\alpha}_\delta(Z)$, $4-2k<\delta<0$, and \(\Delta_0^2v=0\), then $v=0$.
\end{lemma}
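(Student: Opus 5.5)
The plan is to exploit the scaling symmetry of the Euclidean product together with separation of variables in the radial variable $r=|z|$ of the $\C^k$-factor. First, extend $v$ across the puncture. On $Z$ the weight is $\tau\simeq|z|$. The hypothesis $v\in C^{4,\alpha}_\delta(Z)$ gives $|\nabla^jv|\le C|z|^{\delta-j}$, and here $\delta>4-2k$.

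Pair $\Delta_0^2v$ against a test function $\varphi$ that is cut off by $\chi(|z|/\eta)$, and integrate by parts. The boundary terms on the tube $\{|z|\sim\eta\}$ are of size
\[
\eta^{\delta-4+j}\cdot\eta^{-j}\cdot\eta^{2k}=\eta^{2k-4+\delta}\to0 .
\]
So $\Delta_0^2v=0$ holds distributionally on all of $\C^n$. Elliptic regularity then makes $v$ smooth and biharmonic on $\C^n\cong\R^{2n}$; this is the same removability mechanism as in Lemma~\ref{gt:lem:compact-model}.

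Second, apply a Liouville argument on $\R^{2n}$. The weighted bound with $\tau\simeq|z|$ controls $v$ only in the normal directions, so $v$ is a priori not bounded in $w$. However, $|v|\le C|z|^\delta$ with $\delta<0$ means $v$ is bounded on $\{|z|\ge1\}$. Interior estimates on balls of radius $\sim|z|$ give $|\nabla^jv|\le C|z|^{\delta-j}$ there. In the region $|z|\le1$ the bound $|v|\le C|z|^\delta$ is integrable, because $\delta>4-2k>-2k$.

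To handle both regions at once, I would use the mean-value property for biharmonic functions. Average $v$ over a ball $B_R(x)$ with $R\to\infty$. The set $\{|z|\le R^{1/2}\}\cap B_R$ has relative volume $O(R^{-k})$, and its contribution is controlled by the integrable bound. On the rest, $|v|\le CR^{\delta/2}$. Since biharmonic functions satisfy a mean-value identity up to a correction $c R^2\,\overline{\Delta v}$, I would instead apply this argument to the harmonic function $\Delta v$. It satisfies $|\Delta v|\le C|z|^{\delta-2}$, which is still integrable near $z=0$ because $\delta-2>2-2k>-2k$, and decays like $R^{(\delta-2)/2}$ elsewhere. The harmonic mean-value property then gives $\Delta v(x)=\lim_{R\to\infty}\fint_{B_R(x)}\Delta v=0$. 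Hence $v$ itself is harmonic, and the same averaging argument applied to $v$ gives $v\equiv0$.

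The main obstacle is making the averaging estimate uniform given the anisotropic weight: $v$ is only known to decay in $|z|$, not in $|w|$. The key point is that the near-$S$ region $\{|z|\le\sqrt R\}$ has negligible measure inside a large ball while the singular bound remains locally integrable in the $z$-directions, uniformly in $w$. I would verify this by integrating first in $z\in\C^k$: one gets $\int_{|z|\le\sqrt R}|z|^{\delta-2}\,dz\lesssim R^{(2k+\delta-2)/2}$, which after multiplying by the $w$-volume $R^{2n-2k}$ and dividing by $R^{2n}$ tends to zero. The analogous estimate for $v$ holds as well.
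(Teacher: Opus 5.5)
Your proof is correct, and its Liouville step takes a different route from the paper's. The paper gives no argument of its own here; it quotes \cite[Lemma~12]{SS20}. Its codimension-two analogue, Lemma~\ref{k2:lem:matched-neck}, shows the route the paper has in mind:
\begin{itemize}
\item extend $v$ distributionally across $\{z=0\}$;
\item use the uniform bound $\int_{B_2((0,w_0))}|v|\le C$ and the interior estimate for biharmonic functions to see that $v$ is bounded on all of $\C^n$;
\item use the Fourier transform to show that a bounded biharmonic function is a polynomial, hence constant;
\item conclude that the constant is zero from $|v|\le C|z|^\delta$ with $\delta<0$.
\end{itemize}

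Your removability step is the same. The cutoff error $O(\eta^{2k-4+\delta})$ is exactly where $\delta>4-2k$ enters. You then replace the boundedness-plus-polynomial argument with the mean-value property on large balls, applied first to the harmonic function $\Delta v$ and then to $v$. The integrability of $|z|^{\delta-2}$ and $|z|^{\delta}$ in the $\C^k$-directions, uniformly in $w$, makes the averages tend to zero, and your exponent count $R^{(\delta-2-2k)/2}\to0$ is right.

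What each route buys: yours avoids tempered distributions and the uniform interior estimate near $\{0\}\times\C^{n-k}$. The price is that you use the second-derivative bound $|\Delta v|\le C|z|^{\delta-2}$, which the $C^{4,\alpha}_\delta$ hypothesis supplies. The paper's route needs only the $C^0$ growth bound, which is all that Lemma~\ref{k2:lem:matched-neck} works with.

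You can in fact drop the detour through $\Delta v$. A biharmonic function satisfies the exact identity $|B_R|^{-1}\int_{B_R(x)}v=v(x)+c_nR^2\Delta v(x)$ for a dimensional constant $c_n>0$. Your averaging estimate makes the left side tend to $0$ as $R\to\infty$, which forces both $\Delta v(x)=0$ and $v(x)=0$, using only the $C^0$ bound.
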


In codimension two, passing to distributional limits requires the following volume and test-function estimates.
\begin{lemma}\label{k2:lem:volume-test}
For $\eps\le r\le r_0$, we have
\begin{equation}\label{k2:eq:volume-estimates}
  \Vol_{\omega_\eps}\{r<d<2r\}\le Cr^4,
  \qquad
  \Vol_{\omega_\eps}\{d<2\eps\}\le C\eps^4.
\end{equation}
Moreover, for every fixed $\chi\in C^\infty(X)$, pulled back to $\widetilde X$, we have
\begin{equation}\label{k2:eq:test-function-estimate}
  |\mathcal P_\eps^*\chi|\le C_\chi\tau^{-3}.
\end{equation}
\end{lemma}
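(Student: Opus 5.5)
The plan has two parts: compare volumes of $\omega_\eps$ with model metrics region by region, then write $\mathcal P_\eps^*\chi$ explicitly and estimate each term by the curvature and potential bounds already established. Throughout we work with $k=2$, so $S$ has real codimension four.

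\textbf{Volume bounds.} We split according to the regions \eqref{gt:eq:regions}.
\begin{enumerate}[label=\textup{(\alph*)}]
\item On the exterior and outer annulus, $\omega_\eps$ is uniformly equivalent to $\omega$ by the proof of Proposition~\ref{k2:prop:approximate-metric}. The set $\{r<d<2r\}$ is a tubular shell around a submanifold of real codimension four, so its $\omega$-volume is at most $Cr^4$.
\item On the inner annulus, $\eps^{-2}\omega_\eps$ is uniformly equivalent to the product Burns metric $\omega_2$, which is in turn uniformly equivalent to the Euclidean metric in the coordinates $(Z,W)=\eps^{-1}(z,w)$. The volume forms are therefore comparable to Euclidean ones, and the same shell bound $Cr^4$ holds.
\item On the blowup region, cover $\{d<2\eps\}$ by blowup charts over a cover of $S$ by $\eps$-balls; there are about $\eps^{4-2n}$ of them. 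In each chart $\eps^{-2}\omega_\eps$ is $C^0$-close to $\omega_2$ on a fixed compact set, so each chart has $\omega_\eps$-volume of order $\eps^{2n}$. Summing gives $C\eps^4$.
\end{enumerate}
Equivalently, $\omega_\eps^n$ is uniformly comparable to $\pi^*\omega^n$ away from $E$, and $\pi^*\omega^n$ gives these bounds directly.

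\textbf{Test-function estimate.} By \eqref{eq:case-operator},
\[
\mathcal P_\eps^*=L_{\omega_\eps}+\mathscr D^*_{\omega_\eps,\pi^*S(\omega)+\eps^2h_S},
\]
where $L_{\omega_\eps}\chi=-\Delta^2_{\omega_\eps}\chi-\langle\Ric(\omega_\eps),\nabla^2\chi\rangle$. The adjoint $\mathscr D^*$ is a first-order operator whose coefficients involve derivatives of the smooth bounded function $f=\pi^*S(\omega)+\eps^2h_S$ up to order two. Since $f$ is a pullback, its $\omega_\eps$-derivatives are bounded by the same scaling argument used for $\chi$ below, so $\mathscr D^*\chi=O(1)$. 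The key observation is that $\chi$ is a pullback of a smooth function on $X$. We estimate it region by region.
\begin{enumerate}[label=\textup{(\alph*)}]
\item On the exterior and outer annulus, $\omega_\eps-\omega=\eps^2\ddbar A_\eps$ has $C^j$ norms at most $C\eps^2 d^{-2-j}$, which are $o(d^{-j})$. Hence $\nabla^4_{\omega_\eps}\chi$ is controlled by $1+\eps^2d^{-6}$. Using $\eps^2\le r_\eps^3\le d^3$ (since $\beta<2/3$), this gives $|\mathcal P^*_\eps\chi|\le C(1+d^{-3})\le C\tau^{-3}$.
\item On the inner annulus and blowup region, rescale to $\widehat\omega=\tau^{-2}\omega_\eps$ on balls of unit size, on which the metric is uniformly controlled. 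There $\chi(\eps Z,\eps W)$ is a smooth function whose $j$-th derivatives are $O(\tau^{j})$ in rescaled coordinates for $j\ge1$. Thus $\Delta_{\widehat\omega}^2\chi=O(\tau)$ and $\langle\Ric(\widehat\omega),\nabla^2_{\widehat\omega}\chi\rangle=O(\tau)$. Scaling back multiplies by $\tau^{-4}$, so $|L_{\omega_\eps}\chi|\le C\tau^{-3}$.
\end{enumerate}
The $\mathscr D^*$ term contributes $O(1)$ everywhere, which is harmless. Combining the regions gives \eqref{k2:eq:test-function-estimate}.

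\textbf{Main obstacle.} The hardest point is extracting the gain of one power of $\tau$ from the smoothness of $\chi$. The first derivative of $\chi$ is only $O(1)$, not $O(\tau)$, in the original metric, so we must check that every term of the rescaled fourth-order operator carries at least one derivative of $\chi$. This holds because the zeroth-order coefficients of $L_{\omega_\eps}$ vanish, as $L$ annihilates constants. We then need the rescaled metric coefficients bounded in $C^3$; this follows from Proposition~\ref{k2:prop:approximate-metric} and the Burns decay \eqref{k2:eq:Burns-decay}. The individual terms are then bounded by $\tau\cdot O(1)$ in rescaled units, and scaling back gives $\tau^{-3}$.
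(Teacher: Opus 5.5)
Your proposal is correct and follows essentially the paper's argument. For the volume bounds you compare locally with the Euclidean/Burns model metrics and cover $S$ by about $r^{4-2n}$ balls. For the test-function estimate you rescale by $\tau^{-2}$, use that $\tau^4L_{\omega_\eps}$ has bounded coefficients and annihilates constants, note that $\chi-\chi(\pi(x))$ is $O(\tau)$ in the rescaled $C^4$ norm, and observe that the $\mathscr D^*$ term is harmless.

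Two side remarks are inaccurate, though the argument never uses them:

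1. In the blow-up charts the derivatives of $\chi(\eps vu,\eps v,\eps W)$ are only $O(\eps)$, not $O(\eps^j)$; for example, $\partial_u\partial_v$ produces the term $\eps\,\partial_{z_1}\chi$. This does not matter, since you only use the $O(\tau)$ bound.
2. $\omega_\eps^n$ is not uniformly comparable to $\pi^*\omega^n$ near $E$, because $\pi^*\omega^n$ vanishes there. So the bound $\Vol_{\omega_\eps}\{d<2\eps\}\le C\eps^4$ comes from your chart-counting argument (c), not from that ``equivalently'' remark.
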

\begin{proof}
By Proposition \ref{k2:prop:approximate-metric}, \(\omega_\eps\) is a small perturbation of the scalar-flat model product metric near \(S\). Hence there are constants \(c,C>0\), independent of \(\eps\), such that, for every \(x\in\widetilde X\), with \(\rho=\tau_\eps(x)\),
\begin{equation}\label{k2:eq:local-volume-bound}
  \Vol_{\omega_\eps}
  \bigl(B_{\omega_\eps}(x,c\rho)\bigr)
  \le C\rho^{2n}.
\end{equation}
Since \(S\) has real dimension \(2n-4\), we can cover \(S\) by at most \(Cr^{-(2n-4)}\) balls of radius \(r\).  Over each such ball, the four-dimensional normal annulus \(\{r<d<2r\}\) is covered by a uniformly bounded number of balls of radius comparable to \(r\).  Therefore
\[
  \Vol_{\omega_\eps}\{r<d<2r\}
  \le Cr^{-(2n-4)}r^{2n}
  =Cr^4.
\]
The same argument at scale \(r=\eps\), using the rescaled blowup charts \eqref{gt:eq:blowup-coordinates}, gives
\[
  \Vol_{\omega_\eps}\{d<2\eps\}\le C\eps^4.
\]

We next prove \eqref{k2:eq:test-function-estimate}. Write \(h_\eps=\pi^*S(\omega)+\eps^2h_S\). Fix \(x\in\widetilde X\), let \(\rho=\tau(x)\), and rescale \(\omega_\eps\) to \(\widehat\omega=\rho^{-2}\omega_\eps\).  Since \(\chi\) and \(h_\eps\) are pullbacks of uniformly smooth functions on \(X\), we have
\[
  \norm{\chi-\chi(\pi(x))}
       _{C^{4,\alpha}(\widehat\omega)}
  \le C_\chi\rho,
  \qquad
  |\nabla_{\widehat\omega}h_\eps|\le C\rho,
  \qquad
  |\nabla_{\widehat\omega}^2h_\eps|\le C\rho^2.
\]
The rescaled operator \(\rho^4L_{\omega_\eps}\) has uniformly bounded coefficients and annihilates constants.  Hence
\[
  |\rho^4L_{\omega_\eps}\chi|\le C_\chi\rho.
\]
Moreover, \(\mathscr D_{\omega_\eps,h_\eps}^*\chi\) contains only terms of the form \(\langle\nabla h_\eps,\nabla\chi\rangle_{\omega_\eps}\) and \(\chi\,\Delta_{\omega_\eps}h_\eps\). Therefore
\[
  |\rho^4\mathscr D_{\omega_\eps,h_\eps}^*\chi|
  \le C_\chi\rho^4.
\]
Consequently, by \eqref{eq:case-operator},
\[
  |\rho^4\mathcal P_\eps^*\chi|
  \le C_\chi\rho,
\]
and hence
\[
  |\mathcal P_\eps^*\chi|
  \le C_\chi\rho^{-3}
  =C_\chi\tau(x)^{-3}.
\]
\end{proof}

\begin{lemma}\label{k2:lem:compact-limit}
Let $\eps_i\to0$ and suppose
\begin{equation}\label{k2:eq:compact-limit-assumption}
  \norm{\phi_i}_{C^0_\delta}=1,
  \qquad
  \eps_i^\delta
  \norm{\widetilde{\mathcal P}_{\eps_i}\phi_i}_{C^{0,\alpha}_{\delta-4}}
  \longrightarrow0.
\end{equation}
If $\phi_i\to u$ locally on $X\setminus S$, then $u=0$.
\end{lemma}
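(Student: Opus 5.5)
The plan is to mimic the removability argument of Lemma~\ref{gt:lem:compact-model}. First we show that the limit $u$ satisfies
\[
  L_\omega u-\sum_{j=1}^N u(q_j)\psi_j=0
\]
in the sense of distributions on all of $X$, not merely on $X\setminus S$. We then conclude using $\mathcal K^*$ and Lemma~\ref{lem:evaluation-points}. The weight $\eps_i^\delta$ in \eqref{k2:eq:compact-limit-assumption} and the restriction $\delta>-1$ are exactly what make the boundary contributions near $S$ vanish.

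\textbf{Step 1: the equation away from $S$ and the growth of $u$.} Since $\delta<0$, $\eps_i^{-\delta}\to0$. The hypothesis therefore gives $\norm{\widetilde{\mathcal P}_{\eps_i}\phi_i}_{C^{0,\alpha}_{\delta-4}}=o(\eps_i^{-\delta})\to0$. On each $K\Subset X\setminus S$ the weight $\tau$ is bounded below, so $\widetilde{\mathcal P}_{\eps_i}\phi_i\to0$ uniformly on $K$. Lemma~\ref{lem:Schauder} gives uniform $C^{4,\alpha}$ bounds on $K$, so we may assume $\phi_i\to u$ in $C^{4}(K)$. By \eqref{eq:P-L*}, $u$ then satisfies the displayed equation on $X\setminus S$. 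Since the $q_j$ lie in $X\setminus S$, we also have $\phi_i(q_j)\to u(q_j)$. From $\norm{\phi_i}_{C^0_\delta}=1$ we get $|u|\le C d^\delta$, and this is integrable near $S$ (real codimension four, $\delta>-1$). Hence $u$ defines a distribution on $X$.

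\textbf{Step 2: the distributional equation across $S$ (main step).} Fix $\chi\in C^\infty(X)$ and integrate by parts on $\widetilde X$ with respect to $\omega_{\eps_i}^n$:
\[
  \int_{\widetilde X}\phi_i\,\mathcal P_{\eps_i}^*\chi\,\omega_{\eps_i}^n
  =\int_{\widetilde X}\Bigl(\widetilde{\mathcal P}_{\eps_i}\phi_i+\sum_j\phi_i(q_j)\psi_j\Bigr)\chi\,\omega_{\eps_i}^n .
\]
Split both sides into $\{d\ge r\}$ and $\{d<r\}$ for a fixed small $r$.

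On $\{d\ge r\}$, $\omega_{\eps_i}\to\omega$ smoothly and $\mathcal P_{\eps_i}^*\chi\to L_\omega^*\chi$, so the left integrand converges to $u\,L_\omega^*\chi$.

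On $\{d<r\}$, combine $|\phi_i|\le\tau^\delta$ with \eqref{k2:eq:test-function-estimate} and the dyadic volume bounds \eqref{k2:eq:volume-estimates}. The left side is bounded there by
\[
  C\sum_{\eps\le 2^{-m}\le r}2^{-m(\delta-3+4)}+C\eps^{\delta-3}\eps^4\le C(r^{1+\delta}+\eps_i^{1+\delta}),
\]
which is small uniformly in $i$ because $\delta>-1$.

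For the error term, $|\widetilde{\mathcal P}_{\eps_i}\phi_i|\le o(\eps_i^{-\delta})\tau^{\delta-4}$, and the same dyadic summation gives $\int\tau^{\delta-4}\,\omega_{\eps_i}^n\le C\eps_i^{\delta}$. The product is $o(1)$; this is precisely where the factor $\eps_i^\delta$ is needed. The $\psi_j$ term converges to $\sum_j u(q_j)\int_X\psi_j\chi\,\omega^n$.

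Letting $i\to\infty$ and then $r\to0$ yields
\[
  \int_X u\,L_\omega^*\chi\,\omega^n=\sum_j u(q_j)\int_X\psi_j\chi\,\omega^n
\]
for every test function $\chi$. I expect this to be the main obstacle, because the borderline volume $r^4$ against the singular weight $\tau^{\delta-4}$ must be checked carefully in all four regions of \eqref{gt:eq:regions}, including the blowup region.

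\textbf{Step 3: conclusion.} The distributional equation has smooth right-hand side and $L_\omega$ is elliptic, so $u$ is smooth on $X$. Pairing with $\psi_\ell\in\mathcal K^*$ and using $L^2$-orthonormality gives $u(q_\ell)=\langle L_\omega u,\psi_\ell\rangle_{L^2}=0$ for every $\ell$. Hence $L_\omega u=0$ and $\ev_q(u)=0$, and Lemma~\ref{lem:evaluation-points} forces $u=0$.
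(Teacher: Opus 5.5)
Your proposal is correct and follows essentially the same route as the paper. You pair $\widetilde{\mathcal P}_{\eps_i}\phi_i$ with a fixed test function, use $\int\tau^{\delta-4}\omega_{\eps_i}^n\le C\eps_i^\delta$ to kill the source term, and use $|\phi_i\mathcal P^*_{\eps_i}\chi|\le C\tau^{\delta-3}$ with $\delta>-1$ to control the region near $S$, which gives the distributional equation on all of $X$; you then conclude by pairing with $\mathcal K^*$ and using injectivity of $\ev_q$. Your Step 1, which derives the equation on $X\setminus S$ via Schauder bounds, is redundant given Step 2 but harmless.
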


\begin{proof}
Let \(\tau_i=\tau_{\eps_i}\), \(F_i=\widetilde{\mathcal P}_{\eps_i}\phi_i\). Choose \(M_i\in\mathbb N\) such that
\(2^{M_i}\eps_i\le r_0<2^{M_i+1}\eps_i\). Using \eqref{k2:eq:volume-estimates}, we have
\begin{align}
 \int_{\widetilde X}\tau_i^{\delta-4}\omega_{\eps_i}^n
 &=
 \int_{\{d<2\eps_i\}}\tau_i^{\delta-4}\omega_{\eps_i}^n
 +\sum_{m=1}^{M_i}
   \int_{\{2^m\eps_i\le d<2^{m+1}\eps_i\}}
     \tau_i^{\delta-4}\omega_{\eps_i}^n 
 +\int_{\{d\ge2^{M_i+1}\eps_i\}}
     \tau_i^{\delta-4}\omega_{\eps_i}^n \notag\\
 &\le
 C\eps_i^\delta
 +C\sum_{m=1}^{M_i}(2^m\eps_i)^\delta+C
 \le C\eps_i^\delta.
 \label{k2:eq:target-integral}
\end{align}
Here the last inequality uses \(\delta<0\) and
\(\eps_i^\delta\ge1\). Consequently, for every \(\chi\in C^\infty(X)\),
\[
  \left|
    \int_{\widetilde X}F_i\chi\,\omega_{\eps_i}^n
  \right|
  \le C_\chi\eps_i^\delta
       \norm{F_i}_{C^0_{\delta-4}}
  \longrightarrow0.
\]
Since \(|\phi_i|\le\tau_{\eps_i}^\delta\), Lemma~\ref{k2:lem:volume-test} gives
\[
  |\phi_i\mathcal P_{\eps_i}^*\chi|
  \le C_\chi\tau_{\eps_i}^{\delta-3}.
\]
For \(\eps_i<\rho<r_0\), using \(\delta>-1\), the same decomposition gives
\begin{equation}\label{k2:eq:adjoint-tail}
 \int_{\{d<\rho\}}\tau_i^{\delta-3}\omega_{\eps_i}^n
 \le
 C\eps_i^{\delta+1}
 +C\sum_{\substack{m\ge1\\2^m\eps_i<\rho}}
   (2^m\eps_i)^{\delta+1}
 \le C\rho^{\delta+1}.
\end{equation}
By definition of \(\widetilde{\mathcal P}_{\eps_i}\), we have
\[
  \int_{\widetilde X}F_i\chi\,\omega_{\eps_i}^n
  =
  \int_{\widetilde X}
    \phi_i\mathcal P_{\eps_i}^*\chi\,\omega_{\eps_i}^n
  -
  \sum_{j=1}^N\phi_i(q_j)
    \int_{\widetilde X}\psi_j\chi\,\omega_{\eps_i}^n.
\]
On every compact subset of \(X\setminus S\), by Proposition \ref{k2:prop:approximate-metric}, \eqref{eq:P-L*} together with the assumed local convergence, we have
\[
  \phi_i\longrightarrow u,
  \qquad
  \mathcal P_{\eps_i}^*\chi\longrightarrow L_\omega^*\chi,
  \qquad
  \omega_{\eps_i}^n\longrightarrow\omega^n.
\]
Using \eqref{k2:eq:adjoint-tail}, we may first pass to the limit away from \(S\) and then let \(\rho\searrow0\) to obtain
\[
  \int_XuL_\omega^*\chi\,\omega^n
  =
  \sum_{j=1}^Nu(q_j)\int_X\psi_j\chi\,\omega^n.
\]
Thus
\[
  L_\omega u-\sum_{j=1}^Nu(q_j)\psi_j=0
\]
distributionally on \(X\). Since \(|u|\le d^\delta\), \(u\) is locally integrable, and elliptic regularity shows that \(u\) is smooth.  Pairing \(L_\omega u\) with \(\psi_\ell\in\ker L_\omega^*\) gives \(u(q_\ell)=0\). Hence \(u\in\mathcal K\) and \(\ev_q(u)=0\). Consequently, \(u=0\).
\end{proof}

In codimension two, an analogue of Lemma \ref{gt:lem:euclidean-model} is false, so we need the following.
\begin{lemma}\label{k2:lem:matched-neck}
Assume \eqref{k2:eq:compact-limit-assumption}. Let \(x_i\in\widetilde X\) satisfy \(\tau_{\eps_i}(x_i)\rightarrow0\) and \(\eps_i/\tau_{\eps_i}(x_i)\rightarrow0\). Choose coordinates centered at the nearest point of \(x_i\) on \(S\) and rescale by \(\tau_{\eps_i}(x_i)\). If \(\tau_{\eps_i}(x_i)^{-\delta}\phi_i\) converges locally on \((\C^2\setminus\{0\})\times\C^{n-2}\) to a function \(v\), then \(v\) extends smoothly across \(\{0\}\times\C^{n-2}\) to an biharmonic function on \(\C^n\). Moreover, \(v=0\).
\end{lemma}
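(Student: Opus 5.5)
Write $\rho_i=\tau_{\eps_i}(x_i)$ and let $v_i=\rho_i^{-\delta}\phi_i$, viewed in the rescaled coordinates $(z,w)=\rho_i(Z,W)$ centred at the nearest point $p_i\in S$. The plan has three steps: identify the limiting equation away from the singular set, remove the singularity by a distributional argument as in Lemma~\ref{k2:lem:compact-limit}, and then apply a Liouville theorem.

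\textbf{Limiting equation.} Since $\eps_i/\rho_i\to0$ and $\rho_i\to0$, on every compact subset of $(\C^2\setminus\{0\})\times\C^{n-2}$ the metric $\rho_i^{-2}\omega_{\eps_i}$ converges smoothly to the Euclidean metric. In the inner annulus it equals $(\eps_i/\rho_i)^2$ times a small perturbation of the rescaled Burns product, and the latter is Euclidean plus $O((\eps_i/\rho_i)^2|Z|^{-2})$ by \eqref{k2:eq:Burns-decay}. If $\rho_i\gtrsim r_{\eps_i}$, one instead uses the exterior and outer-annulus descriptions, where the corrections are $O(\eps_i^2\rho_i^{-2})$. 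By \eqref{k2:eq:lower-order-scaling}, $\rho_i^4\mathcal P_{\eps_i}$ converges to $-\Delta_0^2$. The evaluation term is harmless, because the points $q_j$ lie at fixed distance from $S$, so they escape to infinity in the rescaled picture, and the $\psi_j$ are smooth. The right-hand side satisfies
\[
\rho_i^{4-\delta}|F_i|\le \rho_i^{4-\delta}\tau_i^{\delta-4}\norm{F_i}_{C^{0,\alpha}_{\delta-4}}\to0
\]
on the relevant region, using $\eps_i^\delta\ge1$. The weight gives the bound $|v|\le C|Z|^\delta$. Together with Lemma~\ref{lem:Schauder}, this yields $\Delta_0^2v=0$ away from $\{Z=0\}$.

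\textbf{Removing the singularity.} This is the main obstacle, since Lemma~\ref{gt:lem:euclidean-model} is unavailable for $k=2$: $\delta>-1>4-2k=0$ fails to lie in its range, and $\log|Z|$-type biharmonic functions occur. I would follow the proof of Lemma~\ref{k2:lem:compact-limit} in rescaled form. Fix $\chi\in C_c^\infty(\C^n)$ and pull it back by the rescaling. Integrate $F_i\chi$ against $\omega_{\eps_i}^n$, normalised by $\rho_i^{-2n}$, and insert the weight $\rho_i^{-\delta}$. The analogue of \eqref{k2:eq:target-integral} then gives a bound
\[
C_\chi\,\rho_i^{-\delta}\bigl(\eps_i^\delta\rho_i^{4}\cdots\bigr)\norm{F_i},
\]
which I expect to tend to zero thanks to the hypothesis \eqref{k2:eq:compact-limit-assumption}, once the dyadic sum is taken over scales below $\rho_i$; this bookkeeping needs to be checked carefully. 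For the adjoint side, the rescaled analogue of \eqref{k2:eq:test-function-estimate} gives $|\rho_i^4\mathcal P^*_{\eps_i}\chi|\le C$ on the rescaled regions, and even better, $C\tau/\rho_i$. Combined with $|v_i|\le(\tau/\rho_i)^\delta$ and the volume bound \eqref{k2:eq:volume-estimates} in rescaled form, the contribution of $\{|Z|<s\}$ is $O(s^{4+\delta})$. Integrability near the codimension-four set holds since $\delta>-1$; in fact it would hold even with the worse $\tau^{-3}$ bound, exactly as in \eqref{k2:eq:adjoint-tail}. Letting first $i\to\infty$ and then $s\to0$ gives $\int v\,\Delta_0^2\chi=0$ for all $\chi$. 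Hence $v$ is distributionally biharmonic on $\C^n$, so by Weyl's lemma it is smooth and biharmonic.

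\textbf{Liouville step.} Since $|v|\le C|Z|^\delta\le C$ away from the set $\{|Z|\le1\}$, the function $v$ is bounded at infinity, and by smoothness it is bounded on $\C^n$. Interior estimates for biharmonic functions give $|\nabla^j v(x)|\le CR^{-j}$ on balls of radius $R$, so $v$ is constant, say $v\equiv c$. Finally, because $\delta<0$, the bound $|v|\le C|Z|^\delta$ forces $v\to0$ as $|Z|\to\infty$, so $c=0$ and $v=0$.
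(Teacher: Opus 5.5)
Your outline follows the paper's proof: the limit equation off $\{z=0\}$, a rescaled version of the duality argument of Lemma~\ref{k2:lem:compact-limit}, and then a Liouville theorem. However, the two key estimates in the middle step are not right as written.

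\textbf{Duality step.} The adjoint bound is mis-scaled. The pulled-back test function $\chi(z/\rho_i,w/\rho_i)$ varies on the scale $\rho_i$, so the argument of \eqref{k2:eq:test-function-estimate} gives $|\tau^4\mathcal P_{\eps_i}^*\chi|\le C_\chi\tau/\rho_i$. In rescaled coordinates, with $a_i=\eps_i/\rho_i$, this is $|\rho_i^4\mathcal P_{\eps_i}^*\chi|\le C_\chi\max\{a_i,|z|\}^{-3}$.
\begin{itemize}
\item The bound $C\tau/\rho_i\simeq C|z|$ that you state is false. On compact subsets of $\{z\neq0\}$, $\rho_i^4\mathcal P^*_{\eps_i}\chi$ tends to a nonzero multiple of $\Delta_0^2\chi$, which need not vanish near $\{z=0\}$.
\item The argument gives no uniform bound $C$ either. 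At $|z|\sim a_i$ the metric $\rho_i^{-2}\omega_{\eps_i}$ is a rescaled Burns metric, not a perturbation of the flat one.
\item Consequently $\{|z|<s\}$ contributes $O(s^{1+\delta}+a_i^{1+\delta})$, not $O(s^{4+\delta})$. It is your fallback remark that actually carries this step, and it works precisely because $\delta>-1$.
\end{itemize}

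\textbf{Source term.} You leave this as bookkeeping to be checked, but it is exactly where the hypothesis \eqref{k2:eq:compact-limit-assumption} enters, so it must be done.
\begin{itemize}
\item Start from $|\rho_i^{4-\delta}F_i|\le C\norm{F_i}_{C^0_{\delta-4}}\max\{a_i,|z|\}^{\delta-4}$.
\item Split $\supp\chi$ into the cap $\{|z|<2a_i\}$, of rescaled volume at most $C_\chi a_i^4$, and the annuli $\{2^ma_i\le|z|<2^{m+1}a_i\}$, of volume at most $C_\chi(2^ma_i)^4$.
\item Summing gives a contribution at most $C_\chi a_i^\delta\norm{F_i}_{C^0_{\delta-4}}=C_\chi\rho_i^{-\delta}\,\eps_i^\delta\norm{F_i}_{C^0_{\delta-4}}$.
\item This tends to $0$ because $\rho_i^{-\delta}\le1$ and $\eps_i^\delta\norm{F_i}_{C^0_{\delta-4}}\to0$.
\end{itemize}

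\textbf{Liouville step.} The claim ``by smoothness it is bounded on $\C^n$'' fails when $n>2$. The slab $\{|z|\le1\}\times\C^{n-2}$ is not compact, so local boundedness gives no global bound.
\begin{itemize}
\item You need a bound uniform in $w_0$, and the growth estimate supplies it. From $|v|\le C|z|^\delta$ you get $\int_{B_2((0,w_0))}|v|\le C$ independently of $w_0$.
\item The interior $L^1$-to-$L^\infty$ estimate for biharmonic functions then bounds $\sup_{B_1((0,w_0))}|v|$ uniformly. This is what the paper does.
\item Alternatively, insert $\int_{B_R(x)}|v|\le CR^{2n}$ for $R\ge1$ directly into the $L^1$ form of the interior derivative estimate.
\end{itemize}
With this fixed, your gradient-estimate Liouville argument is a valid and slightly more elementary replacement for the paper's Fourier-transform argument (bounded biharmonic implies polynomial, hence constant). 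The final step, using $\delta<0$ to get $v=0$, is the same as in the paper.
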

\begin{proof}
Let \(\rho_i=\tau_{\eps_i}(x_i)\), \(a_i=\eps_i/\rho_i\) and \(F_i=\widetilde{\mathcal P}_{\eps_i}\phi_i\). Away from \(\{z=0\}\), the rescaled metrics \(\rho_i^{-2}\omega_{\eps_i}\) converge locally to the Euclidean metric, while \(\rho_i^4\mathcal P_{\eps_i}\) converges locally to \(-\frac14\Delta_0^2\). Therefore,
\[
  \Delta_0^2v=0
  \qquad\text{on }
  (\C^2\setminus\{0\})\times\C^{n-2}.
\]
It remains to show that no distributional term is supported on \(\{z=0\}\).

Let \(\chi\in C_c^\infty(\C^n)\). For sufficiently large \(i\), its pullback is a smooth test function on the rescaled blowup. Formal adjointness on the whole smooth manifold gives
\begin{align}
  \int_{\widetilde X}
    \rho_i^{-\delta}\phi_i\,
    \rho_i^4\mathcal P_{\eps_i}^*\chi\,
    (\rho_i^{-2}\omega_{\eps_i})^n=&
    \int_{\widetilde X}
    \rho_i^{4-\delta}F_i\chi\,
    (\rho_i^{-2}\omega_{\eps_i})^n \notag\\&+
    \int_{\widetilde X}
    \rho_i^{4-\delta}
    \sum_{j=1}^N\phi_i(q_j)\psi_j\chi\,
    (\rho_i^{-2}\omega_{\eps_i})^n.
  \label{k2:eq:matched-adjoint}
\end{align}
In the rescaled coordinates, \(\tau_{\eps_i}/\rho_i\simeq \max\{a_i,|z|\}\). Therefore,
\[
  \bigl|\rho_i^{4-\delta}F_i\bigr|
  \le
  C\norm{F_i}_{C^0_{\delta-4}}\max\{a_i,|z|\}^{\delta-4}.
\]
Choose \(R>1\) such that \(\supp\chi\subset\{|z|<R,\ |w|<R\}\). By rescaling \eqref{k2:eq:local-volume-bound}, a ball of radius comparable to \(s\) in the rescaled metric has volume at most \(Cs^{2n}\). The support in the tangential direction \(\{|w|<R\}\) can be covered by \(C_\chi s^{-(2n-4)}\) such balls. Hence, for \(a_i\le s\le R\), we have
\[
  \int_{\supp\chi\cap\{s\le|z|<2s\}}
    (\rho_i^{-2}\omega_{\eps_i})^n
  \le C_\chi s^4.
\]
The same argument at the cap scale gives
\[
  \int_{\supp\chi\cap\{|z|<2a_i\}}
    (\rho_i^{-2}\omega_{\eps_i})^n
  \le C_\chi a_i^4.
\]
Decomposing \(\supp\chi\) into this cap and the normal annuli \(\{2^ma_i\le|z|<2^{m+1}a_i\}\), we obtain
\begin{align*}
  &\int_{\supp\chi}
    \max\{a_i,|z|\}^{\delta-4}
    (\rho_i^{-2}\omega_{\eps_i})^n\\
  &\quad=
    \int_{\supp\chi\cap\{|z|<2a_i\}}
      \max\{a_i,|z|\}^{\delta-4}
      (\rho_i^{-2}\omega_{\eps_i})^n\\
  &\qquad+
    \sum_{m=1}^{\infty}
    \int_{\supp\chi\cap
      \{2^ma_i\le |z|<2^{m+1}a_i\}}
      \max\{a_i,|z|\}^{\delta-4}
      (\rho_i^{-2}\omega_{\eps_i})^n\\
  &\quad\le
    C_\chi a_i^{\delta-4}a_i^4
    +C_\chi\sum_{m=1}^{\infty}
      (2^ma_i)^{\delta-4}(2^ma_i)^4\\
  &\quad=
    C_\chi a_i^\delta
    \left(1+\sum_{m=1}^{\infty}2^{m\delta}\right)
    \le C_\chi a_i^\delta.
\end{align*}
Consequently, by \eqref{k2:eq:compact-limit-assumption}, we have
\begin{align}
  \left|
    \int_{\widetilde X}
    \rho_i^{4-\delta}F_i\chi\,
    (\rho_i^{-2}\omega_{\eps_i})^n
  \right|
  \le C\norm{F_i}_{C^0_{\delta-4}} a_i^\delta 
  =C\rho_i^{-\delta}\eps_i^\delta \norm{F_i}_{C^0_{\delta-4}}
  \longrightarrow0.
  \label{k2:eq:matched-source}
\end{align}
Since \(\phi_i(q_j)\) is uniformly bounded and \(\psi_j\) is smooth, we have
\begin{equation}\label{k2:eq:matched-finite-rank}
  \left|
    \int_{\widetilde X}
      \rho_i^{4-\delta}
      \sum_{j=1}^N\phi_i(q_j)\psi_j\chi\,
      (\rho_i^{-2}\omega_{\eps_i})^n
  \right|
  \le C_\chi\rho_i^{4-\delta}
  \longrightarrow0.
\end{equation}

We next control the part of the left-hand side of \eqref{k2:eq:matched-adjoint} that collapses to \(\{z=0\}\). Since \(\norm{\phi_i}_{C^0_\delta}=1\), we have
\[
  \bigl|\rho_i^{-\delta}\phi_i\bigr|
  \le C\max\{a_i,|z|\}^{\delta}.
\]
The argument in proving \eqref{k2:eq:test-function-estimate}, applied in the rescaled coordinates, gives
\[
  \bigl|\rho_i^4\mathcal P_{\eps_i}^*\chi\bigr|
  \le C_\chi\max\{a_i,|z|\}^{-3}.
\]
Fix \(0<\eta<1\), independent of \(i\). Since \(a_i\to0\), we have \(a_i<\eta\) for all sufficiently large \(i\). Therefore,
\begin{equation}\label{k2:eq:matched-tail}
  \int_{\{|z|<\eta\}}
  \left|
    \rho_i^{-\delta}\phi_i\,
    \rho_i^4\mathcal P_{\eps_i}^*\chi
  \right|
  (\rho_i^{-2}\omega_{\eps_i})^n
  \le
  C_\chi\bigl(\eta^{\delta+1}+a_i^{\delta+1}\bigr).
\end{equation}
Here the first term comes from the normal annuli \(a_i<|z|<\eta\), while the second comes from the cap \(|z|\lesssim a_i\). Split the left-hand side of \eqref{k2:eq:matched-adjoint} into the regions \(\{|z|<\eta\}\) and \(\{|z|\ge\eta\}\). On \(\{|z|\ge\eta\}\), it is clear that
\[
  \int_{\{|z|\ge\eta\}}
    \rho_i^{-\delta}\phi_i\,
    \rho_i^4\mathcal P_{\eps_i}^*\chi\,
    (\rho_i^{-2}\omega_{\eps_i})^n
  \longrightarrow
  \int_{\{|z|\ge\eta\}}
    v\,\Delta_0^2\chi\,\omega_0^n.
\]
Combining \eqref{k2:eq:matched-adjoint}, \eqref{k2:eq:matched-source}, \eqref{k2:eq:matched-finite-rank}, and
\eqref{k2:eq:matched-tail}, we obtain
\begin{equation}\label{k2:eq:matched-exterior-limit}
  \left|
    \int_{\{|z|\ge\eta\}}
      v\,\Delta_0^2\chi\,\omega_0^n
  \right|
  \le C_\chi\eta^{\delta+1}.
\end{equation}

Since \(\norm{\phi_i}_{C^0_\delta}=1\), passing to the limit gives
\begin{equation}\label{k2:eq:matched-limit-growth}
  |v(z,w)|\le C|z|^\delta
  \qquad\text{for }z\ne0.
\end{equation}
Since the normal variable \(z\in\C^2\) and \(\delta>-1\), \eqref{k2:eq:matched-limit-growth} implies \(v\in L^1_{\mathrm{loc}}(\C^n)\). In particular,
\begin{equation}\label{k2:eq:matched-interior-limit}
  \int_{\{|z|<\eta\}}
    |v\,\Delta_0^2\chi|\,\omega_0^n
  \le C_\chi\eta^{\delta+4}
  \longrightarrow0.
\end{equation}
Combining \eqref{k2:eq:matched-exterior-limit} and \eqref{k2:eq:matched-interior-limit}, and letting \(\eta\searrow0\), we obtain
\begin{equation}\label{k2:eq:matched-distributional-equation}
  \int_{\C^n}v\,\Delta_0^2\chi\,\omega_0^n=0
  \qquad
  \text{for every }\chi\in C_c^\infty(\C^n).
\end{equation}
Thus \(v\) is distributionally biharmonic on \(\C^n\). Elliptic regularity applied to \eqref{k2:eq:matched-distributional-equation} shows that \(v\) extends smoothly across \(\{z=0\}\) as a biharmonic function.

It remains to prove that this extension vanishes. For every \(w_0\in\C^{n-2}\), \eqref{k2:eq:matched-limit-growth} gives
\[
  \int_{B_2((0,w_0))}
    |v|\,\omega_0^n
  \le C\int_{\{|z|<2\}}|z|^\delta\,\omega_0^2
  \le C,
\]
where the constant is independent of \(w_0\). The interior estimate for biharmonic functions gives
\[
  \sup_{B_1((0,w_0))}|v|
  \le
  C\int_{B_2((0,w_0))}|v|\,\omega_0^n
  \le C.
\]
Thus \(v\) is uniformly bounded near \(\{0\}\times\C^{n-2}\). On the region \(|z|\ge1\), \eqref{k2:eq:matched-limit-growth} also gives \(|v|\le C\) since \(\delta \in (-1,0)\). Hence \(v\) is a bounded biharmonic function. In particular, \(v\) defines a tempered distribution. Taking the Fourier transform of \eqref{k2:eq:matched-distributional-equation} gives \(|\xi|^4\mathcal Fv=0\). It follows that \(\mathcal Fv\) is supported at the origin, and hence that \(v\) is a polynomial. Since \(v\) is bounded, it is constant. Finally,
letting \(|z|\to\infty\) in \eqref{k2:eq:matched-limit-growth} and using \(\delta<0\) shows that this constant is zero. Therefore \(v=0\).
\end{proof}

We now combine these lemmas to prove the uniform linear estimate.
\begin{proof}[Proof of Proposition~\ref{prop:linear-inverse}]
Assume first that \(k>2\). The argument in \cite[Proposition~9]{SS20} applies without substantive change. Lemma~\ref{lem:Schauder} supplies the local estimate, while Lemmas~\ref{gt:lem:compact-model}, \ref{lem:Burns-product-kernel}, and \ref{gt:lem:euclidean-model} exclude the compact, bubble, and neck limits, respectively. The only difference is the finite-rank term in \eqref{eq:case-operator}, which is incorporated into Lemma~\ref{gt:lem:compact-model} and vanishes under the bubble and neck rescalings. This proves \eqref{gt:eq:inverse-bound}.

Assume now that \(k=2\). By Lemma~\ref{lem:Schauder}, it suffices to prove
\begin{equation}\label{k2:eq:C0-linear}
  \norm{\phi}_{C^0_\delta}
  \le
  C\eps^\delta
  \norm{\widetilde{\mathcal P}_\eps\phi}
       _{C^{0,\alpha}_{\delta-4}}.
\end{equation}
Suppose that \eqref{k2:eq:C0-linear} fails. After normalization, there exist \(\eps_i\to0\) and \(\phi_i\) satisfying \eqref{k2:eq:compact-limit-assumption}. Let \(\tau_i=\tau_{\eps_i}\) and \(F_i=\widetilde{\mathcal P}_{\eps_i}\phi_i\). Since \(\delta<0\), \eqref{k2:eq:compact-limit-assumption} gives
\begin{equation}\label{eq:F-to-0}
  \norm{F_i}_{C^{0,\alpha}_{\delta-4}}
  =
  \eps_i^{-\delta}
  \left(
    \eps_i^\delta
    \norm{F_i}_{C^{0,\alpha}_{\delta-4}}
  \right)
  \longrightarrow0.
\end{equation}
Lemma~\ref{lem:Schauder} therefore gives 
\begin{equation}\label{k2:eq:uniform-holder-bound}
  \norm{\phi_i}_{C^{4,\alpha}_\delta}\le C.
\end{equation}
For every compact subset \(K\Subset X\setminus S\), the weight \(\tau_i\) is bounded below by a positive constant depending only on \(K\), and \(\omega_{\eps_i}\) converges smoothly to \(\omega\) on \(K\). Hence \eqref{k2:eq:uniform-holder-bound} gives
\[
  \norm{\phi_i}_{C^{4,\alpha}(K,\omega)}\le C_K.
\]
By the Arzelà--Ascoli theorem and the compact embedding \(C^{4,\alpha}\hookrightarrow C^{4,\alpha'}\), after passing to a subsequence, \(\phi_i\) converges locally on \(X\setminus S\) to a function \(u\) in \(C^{4,\alpha'}\). Lemma~\ref{k2:lem:compact-limit} gives \(u=0\). In particular,
\begin{equation}\label{k2:eq:compact-q-values}
  \phi_i(q_j)\longrightarrow0
  \qquad (1\le j\le N).
\end{equation}
Choose \(x_i\in\widetilde X\) such that
\begin{equation}\label{k2:eq:normalizing-points}
  \tau_i(x_i)^{-\delta}|\phi_i(x_i)|
  \ge\frac12,
\end{equation}
and write \(\rho_i=\tau_i(x_i)\). The local convergence \(\phi_i\to0\) on \(X\setminus S\), together with
\eqref{k2:eq:normalizing-points}, implies that \(\rho_i\to0\). After passing to a further subsequence, either
\(\rho_i/\eps_i\) is bounded or \(\rho_i/\eps_i\to\infty\).
~\\

\emph{Case 1: \(\rho_i/\eps_i\) is bounded.}
Choose \(C>1\) such that \(1\le\rho_i/\eps_i\le C\). In coordinates centered at the nearest point of \(S\), dilate by \(\eps_i\). The metrics \(\eps_i^{-2}\omega_{\eps_i}\) converge locally smoothly to the product model metric \(\omega_2\) on \(Y=\Bl_0\C^2\times\C^{n-2}\). By \eqref{k2:eq:uniform-holder-bound}, and passing to a subsequence, we have
\[
  \eps_i^{-\delta}\phi_i
  \longrightarrow v_1
  \quad\text{in }C^{4,\alpha'}_{\mathrm{loc}}(Y)
\]
for every \(\alpha'<\alpha\). In particular,  \(v_1\in C^{4,\alpha'}_\delta(Y)\). Note that \(v_1\) is nonzero. Indeed, the rescaled points \(x_i\) remain in a fixed compact subset of \(Y\), and
\begin{align}
  \eps_i^{-\delta}|\phi_i(x_i)|
  =
  \left(\frac{\rho_i}{\eps_i}\right)^\delta
  \rho_i^{-\delta}|\phi_i(x_i)| 
  \ge \frac12 C^\delta>0
  \label{k2:eq:Burns-limit-nonzero}
\end{align}
by \eqref{k2:eq:normalizing-points} and \(\delta<0\).

In the \(\eps_i\)-rescaled coordinates, the equation
\[
  \mathcal P_{\eps_i}\phi_i
  =
  F_i+\sum_{j=1}^N\phi_i(q_j)\psi_j
\]
becomes
\begin{equation}\label{k2:eq:Burns-rescaled-equation}
  \bigl(\eps_i^4\mathcal P_{\eps_i}\bigr)
  \bigl(\eps_i^{-\delta}\phi_i\bigr)
  =
  \eps_i^{4-\delta}F_i
  +
  \eps_i^{4-\delta}
  \sum_{j=1}^N\phi_i(q_j)\psi_j.
\end{equation}
For every compact subset \(K\subset Y\), the definition of the weighted \(C^{0,\alpha}_{\delta-4}\)-norm and \eqref{eq:F-to-0} give
\begin{equation}\label{k2:eq:Burns-source-vanishing}
  \norm{\eps_i^{4-\delta}F_i}
       _{C^{0,\alpha}(K,\eps_i^{-2}\omega_{\eps_i})}
  \le
  C_K\norm{F_i}_{C^{0,\alpha}_{\delta-4}}
  \longrightarrow0.
\end{equation}
Moreover, \eqref{k2:eq:compact-q-values} and the smoothness of the \(\psi_j\) give
\begin{equation}\label{k2:eq:Burns-finite-rank-vanishing}
  \norm{
    \eps_i^{4-\delta}
    \sum_{j=1}^N\phi_i(q_j)\psi_j
  }_{C^{0,\alpha}(K,\eps_i^{-2}\omega_{\eps_i})}
  \longrightarrow0.
\end{equation}
By Proposition~\ref{k2:prop:approximate-metric}, the smooth convergence \(\eps_i^{-2}\omega_{\eps_i}\rightarrow\omega_2\) on compact subsets of \(Y\) implies convergence of the corresponding fourth-order operators
\[
  \eps_i^4L_{\omega_{\eps_i}}^*
  \longrightarrow L_{\omega_2}^*.
\]
Since the product model metric is scalar-flat, equation \eqref{k2:eq:adjoint-difference} gives \(L_{\omega_2}^*=L_{\omega_2}\). Moreover, by \eqref{eq:case-operator} and
\eqref{k2:eq:lower-order-scaling},
\[
  \eps_i^4\mathcal P_{\eps_i}
  =
  \eps_i^4L_{\omega_{\eps_i}}^*
  +
  \eps_i^4
  \mathscr D_{\omega_{\eps_i},
    \pi^*S(\omega)+\eps_i^2h_S}
  \longrightarrow L_{\omega_2}
\]
on compact subsets of \(Y\). Passing to the limit in \eqref{k2:eq:Burns-rescaled-equation}, using \eqref{k2:eq:Burns-source-vanishing} and \eqref{k2:eq:Burns-finite-rank-vanishing}, gives
\(L_{\omega_2}v_1=0\). By Lemma~\ref{lem:Burns-product-kernel}, we have \(v_1=0\), contradicting \eqref{k2:eq:Burns-limit-nonzero}.
~\\

\emph{Case 2: \(\rho_i/\eps_i\to\infty\).}
Clearly, \(\eps_i/\rho_i \to 0\). In coordinates centered at the nearest point of \(S\), dilate by \(\rho_i\). The metrics \(\rho_i^{-2}\omega_{\eps_i}\) converge locally smoothly to the Euclidean product metric \(\omega_0\) on \(Z=(\C^2\setminus\{0\})\times\C^{n-2}\). By \eqref{k2:eq:uniform-holder-bound}, and passing to a subsequence, we have
\[
  \rho_i^{-\delta}\phi_i
  \longrightarrow v_2
  \quad\text{in }C^{4,\alpha'}_{\mathrm{loc}}(Z)
\]
for every \(\alpha'<\alpha\). In particular, \(v_2\in C^{4,\alpha'}_\delta(Z)\). Note that \(v_2\) is nonzero. Indeed, the rescaled points \(x_i\) remain in a fixed normal annulus, and by \eqref{k2:eq:normalizing-points},
\begin{equation}\label{k2:eq:neck-limit-nonzero}
  \rho_i^{-\delta}|\phi_i(x_i)|\ge\frac12.
\end{equation}

As in Case~1, \eqref{eq:F-to-0} and \eqref{k2:eq:compact-q-values} show that the rescaled source and finite-rank terms vanish on compact subsets of \(Z\), while \eqref{k2:eq:lower-order-scaling} removes the additional first-order term. Proposition~\ref{k2:prop:approximate-metric} and \eqref{k2:eq:adjoint-difference} therefore give
\[
  \rho_i^4\mathcal P_{\eps_i}
  \longrightarrow L_{\omega_0}=
-\Delta_0^2
\]
on compact subsets of \(Z\), and passing to the limit in the rescaled equation yields \(L_{\omega_0}v_2=0\).

More importantly, \eqref{k2:eq:compact-limit-assumption}, the limits \(\rho_i\rightarrow0\), \(\eps_i/\rho_i\rightarrow0\), and the local convergence to \(v_2\) verify all the hypotheses of Lemma~\ref{k2:lem:matched-neck}. Hence \(v_2=0\), contradicting \eqref{k2:eq:neck-limit-nonzero}.

Both cases are impossible. Hence \eqref{k2:eq:C0-linear} holds. Combining it with Lemma~\ref{lem:Schauder}, and using \(\eps^\delta\ge1\) for small \(\eps\), gives
\begin{align*}
  \norm{\phi}_{C^{4,\alpha}_\delta}
  \le
  C\left(
    \norm{\phi}_{C^0_\delta}
    +
    \norm{\widetilde{\mathcal P}_\eps\phi}
         _{C^{0,\alpha}_{\delta-4}}
  \right)
  \le
  C\eps^\delta
  \norm{\widetilde{\mathcal P}_\eps\phi}
       _{C^{0,\alpha}_{\delta-4}}.
\end{align*}
This proves \eqref{k2:eq:linear-inverse}.

For each fixed \(\eps\), the weighted H\"older norms are equivalent to the ordinary H\"older norms on the compact manifold \(\widetilde X\). The operator \(\mathcal P_\eps\) is a lower-order perturbation of the index-zero elliptic operator \(L_{\omega_\eps}^*\), and hence has index zero. Since \(\widetilde{\mathcal P}_\eps\) is a finite-rank perturbation of \(\mathcal P_\eps\), it also has index zero. Estimates \eqref{gt:eq:inverse-bound} and \eqref{k2:eq:linear-inverse}, in their respective cases, imply \(\ker\widetilde{\mathcal P}_\eps=\{0\}\). Therefore \(\widetilde{\mathcal P}_\eps\) is surjective and hence an isomorphism.
\end{proof}

\subsection{The nonlinear equation}
\label{sec:nonlinear}

We now perturb the approximation metric \(\omega_\eps\). Let \(\omega_{\eps,\phi}= \omega_\eps+\ddbar\phi\). The scalar curvature linearization at \(\omega_\eps\) is
\begin{equation}\label{eq:scalar-expansion}
  S(\omega_{\eps,\phi})
  =
  S(\omega_\eps)+L_{\omega_\eps}\phi+Q_\eps(\phi).
\end{equation}
Our goal is to solve the modified scalar-curvature equation
\begin{equation}\label{eq:nonlinear-target}
  S(\omega_{\eps,\phi})
  =
  \begin{cases}
    \displaystyle
    \pi^*S(\omega)
    +\sum_{j=1}^N\phi(q_j)\psi_j,
    & k>2,\\[8pt]
    \displaystyle
    \pi^*S(\omega)+\eps^2h_S
    +\sum_{j=1}^N\phi(q_j)\psi_j,
    & k=2.
  \end{cases}
\end{equation}
The finite-rank terms arise from the definition of \(\widetilde{\mathcal P}_\eps\). Once \(\norm{\phi}_{C^0_\delta}\to0\), their coefficients tend to zero because the points \(q_j\) are away from \(S\). In codimension two, the additional term \(\eps^2h_S\) also tends uniformly to zero. Thus a sufficiently small solution of \eqref{eq:nonlinear-target} gives the desired scalar-curvature approximation. We first show the nonlinear estimate used for both cases.

\begin{lemma}\label{lem:quadratic}
There are constants \(c,C>0\), independent of \(\eps\), such that if
\[
  \norm{\phi}_{C^{4,\alpha}_2}\le c,
  \qquad
  \norm{\psi}_{C^{4,\alpha}_2}\le c,
\]
then
\begin{align}
  \norm{Q_\eps(\phi)-Q_\eps(\psi)}_{C^{0,\alpha}_{\delta-4}}
  &\le
  C\left(
    \norm{\phi}_{C^{4,\alpha}_2}
    +\norm{\psi}_{C^{4,\alpha}_2}
  \right)
  \norm{\phi-\psi}_{C^{4,\alpha}_\delta},
  \label{eq:quadratic-difference}\\
  \norm{Q_\eps(\phi)}_{C^{0,\alpha}_{\delta-4}}
  &\le
  C\norm{\phi}_{C^{4,\alpha}_2}
  \norm{\phi}_{C^{4,\alpha}_\delta}.
  \label{eq:quadratic-single}
\end{align}
\end{lemma}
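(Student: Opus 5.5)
The plan is to prove Lemma~\ref{lem:quadratic} pointwise and locally, one rescaled ball at a time, and then read off the weights. Fix \(x\in\widetilde X\), set \(\rho=\tau(x)\), and work on \(B_x\) with the rescaled metric \(\widehat\omega=\rho^{-2}\omega_\eps\). The regional descriptions in Propositions~\ref{gt:prop:approximate-metric} and \ref{k2:prop:approximate-metric} show that \(\widehat\omega\) has uniformly bounded geometry in \(C^{k,\alpha}\) on \(B_x\), uniformly in \(x\) and small \(\eps\). This bounded geometry holds in the model charts, namely the product Burns or Burns--Simanca metric, the Euclidean neck, and the background metric.

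Scaling is the key bookkeeping. Put \(\widehat\phi=\rho^{-2}\phi\). Then \(\omega_\eps+\ddbar\phi=\rho^2(\widehat\omega+\ddbar\widehat\phi)\), and therefore
\[
S(\omega_{\eps,\phi})=\rho^{-2}S(\widehat\omega+\ddbar\widehat\phi),\qquad
Q_\eps(\phi)=\rho^{-2}\widehat Q(\widehat\phi),
\]
where \(\widehat Q\) is the quadratic remainder at \(\widehat\omega\). By Definition~\ref{def:weighted-holder}, the bound \(\norm{\phi}_{C^{4,\alpha}_2}\le c\) means precisely that \(\norm{\widehat\phi}_{C^{4,\alpha}(B_x,\widehat\omega)}\le c\). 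This is why the weight \(2\) appears. Taking \(c\) small makes \(\widehat\omega+\ddbar\widehat\phi\) uniformly equivalent to \(\widehat\omega\).

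In this regime, \(S(\widehat\omega+\ddbar u)\) is a smooth function of \((\nabla^2u,\nabla^3u,\nabla^4u)\), with coefficients controlled by the bounded geometry of \(\widehat\omega\). A standard mean-value or Taylor argument applied to this expression then gives
\[
\norm{\widehat Q(\widehat\phi)-\widehat Q(\widehat\psi)}_{C^{0,\alpha}(B_x)}\le C\bigl(\norm{\widehat\phi}_{C^{4,\alpha}}+\norm{\widehat\psi}_{C^{4,\alpha}}\bigr)\norm{\widehat\phi-\widehat\psi}_{C^{4,\alpha}}.
\]
To see this, write \(\widehat Q(\widehat\phi)-\widehat Q(\widehat\psi)=\int_0^1(L_{\widehat\omega+\ddbar(\widehat\psi+t(\widehat\phi-\widehat\psi))}-L_{\widehat\omega})(\widehat\phi-\widehat\psi)\,dt\). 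The coefficient difference between the two linearizations is bounded in \(C^{0,\alpha}\) by \(C\norm{\widehat\psi+t(\widehat\phi-\widehat\psi)}_{C^{4,\alpha}}\). The ingredients are the Kähler structure, the inverse metric, and \(\Ric\) and its derivatives, all of which depend smoothly on the potential up to fourth order.

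The last step converts back to weights. Since \(\norm{\widehat\phi-\widehat\psi}_{C^{4,\alpha}(B_x)}\le\rho^{\delta-2}\norm{\phi-\psi}_{C^{4,\alpha}_\delta}\), we obtain
\[
\norm{Q_\eps(\phi)-Q_\eps(\psi)}_{C^{0,\alpha}(B_x,\widehat\omega)}\le C\rho^{-2}\rho^{\delta-2}\bigl(\norm{\phi}_{C^{4,\alpha}_2}+\norm{\psi}_{C^{4,\alpha}_2}\bigr)\norm{\phi-\psi}_{C^{4,\alpha}_\delta}.
\]
Multiplying by \(\rho^{-(\delta-4)}\) and taking the supremum over \(x\) yields \eqref{eq:quadratic-difference}. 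Setting \(\psi=0\), and using \(Q_\eps(0)=0\), gives \eqref{eq:quadratic-single}.

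The main obstacle is the uniformity of the bounded-geometry claim across all four regions. This is especially delicate in codimension two, where \(\omega_\eps\neq\omega\) even in the exterior region. I would verify it by citing the regional smallness estimates in Propositions~\ref{gt:prop:approximate-metric} and \ref{k2:prop:approximate-metric}, as already done in Lemma~\ref{lem:Schauder}. These give uniform \(C^{4,\alpha}\) control of \(\widehat\omega\), which is what the coefficients of \(\widehat Q\) require.
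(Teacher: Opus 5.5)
Your proposal is correct and follows the paper's proof essentially step for step. The paper likewise rescales each weighted ball by $\rho=\tau(x)$ and uses $Q_\eps(\phi)=\rho^{-2}\widehat Q(\widehat\phi)$. It also uses the bounds $\|\widehat\phi\|_{C^{4,\alpha}}\le C\norm{\phi}_{C^{4,\alpha}_2}$ and $\|\widehat\phi-\widehat\psi\|_{C^{4,\alpha}}\le C\rho^{\delta-2}\norm{\phi-\psi}_{C^{4,\alpha}_\delta}$, applies a mean-value bound on the uniformly bounded-geometry chart, and rescales back. Your integral formula $\int_0^1(L_{\widehat\omega+\ddbar u_t}-L_{\widehat\omega})(\widehat\phi-\widehat\psi)\,dt$ is simply an explicit version of the paper's appeal to the uniformly bounded second derivative of $u\mapsto S(\widehat\omega+\ddbar u)$.
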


\begin{proof}
Fix a weighted ball centered at \(x\), and let \(\rho=\tau(x)\). Denote by
\[
  \widehat\omega_\eps=\rho^{-2}\omega_\eps,
  \qquad
  \widehat\phi=\rho^{-2}\phi,
  \qquad
  \widehat\psi=\rho^{-2}\psi.
\]
The rescaled metrics have uniformly bounded geometry. Moreover, by the assumptions
\[
  \|\widehat\phi\|_{C^{4,\alpha}}
  +
  \|\widehat\psi\|_{C^{4,\alpha}}
  \le Cc.
\]
Choose \(c>0\) sufficiently small. Then, for every \(t\in[0,1]\), the metrics
\[
  \widehat\omega_\eps+t\ddbar\widehat\phi,
  \qquad
  \widehat\omega_\eps+t\ddbar\widehat\psi,\qquad
  \widehat\omega_\eps
  +\ddbar\bigl((1-t)\widehat\psi+t\widehat\phi\bigr)
\]
are K\"ahler and uniformly equivalent to \(\widehat\omega_\eps\). Consequently, on this fixed \(C^{4,\alpha}\)-neighbourhood of the origin, the second derivative of
\[
  u\longmapsto S(\widehat\omega_\eps+\ddbar u)
\]
is bounded uniformly in \(\eps\) and in the rescaled chart. The mean-value formula therefore gives
\[
  \|
    \widehat Q_\eps(\widehat\phi)
    -\widehat Q_\eps(\widehat\psi)
  \|_{C^{0,\alpha}}
  \le
  C\left(\|\widehat\phi\|_{C^{4,\alpha}}+\|\widehat\psi\|_{C^{4,\alpha}}\right)
  \|\widehat\phi-\widehat\psi\|_{C^{4,\alpha}}.
\]
Moreover,
\[
  \|\widehat\phi\|_{C^{4,\alpha}}
  \le C\norm{\phi}_{C^{4,\alpha}_2},
  \qquad
  \|\widehat\phi-\widehat\psi\|_{C^{4,\alpha}}
  \le
  C\rho^{\delta-2}
  \norm{\phi-\psi}_{C^{4,\alpha}_\delta}.
\]
Since scalar curvature acquires the factor \(\rho^{-2}\) when scaling
back, the resulting bound has size \(\rho^{\delta-4}\). Taking the
supremum over all weighted balls proves
\eqref{eq:quadratic-difference}. Setting \(\psi=0\) gives
\eqref{eq:quadratic-single}.
\end{proof}
Since \(\tau\ge\eps\) and \(\delta-2<0\), the definition of the weighted
norms also gives
\begin{equation}\label{eq:Cdelta-C2}
  \norm{\phi}_{C^{4,\alpha}_2}
  \le
  C\eps^{\delta-2}
  \norm{\phi}_{C^{4,\alpha}_\delta}.
\end{equation}

We next show an additional estimate needed only in the codimension two case. Choose
\begin{equation}\label{k2:eq:nonlinear-delta-choice}
  \frac12<\beta<\frac23,
  \qquad
  \frac{2-4\beta}{3-\beta}<\delta<0.
\end{equation}
The second inequality is equivalent to
\begin{equation}\label{k2:eq:nonlinear-exponent}
  3\delta-2+\beta(4-\delta)>0.
\end{equation}
The interval in \eqref{k2:eq:nonlinear-delta-choice} is contained in \((-1,0)\), because
\[
  \frac{2-4\beta}{3-\beta}>-1
  \quad\Longleftrightarrow\quad
  \beta<1,
\]
and it is nonempty because \(\beta>1/2\).

\begin{lemma}
\label{k2:lem:remainder-estimate}
Assume \(k=2\) and \(-1<\delta<0\). Then, for every \(\phi\in C^{4,\alpha}_\delta(\widetilde X)\),
\begin{equation}\label{k2:eq:remainder-estimate}
  \norm{
    \mathscr D_{\omega_\eps,
      S(\omega_\eps)-\pi^*S(\omega)-\eps^2h_S}(\phi)
  }_{C^{0,\alpha}_{\delta-4}}
  \le
  C\eps^{\delta-2}r_\eps^{4-\delta}
  \norm{\phi}_{C^{4,\alpha}_\delta}.
\end{equation}
\end{lemma}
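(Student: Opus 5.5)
Write $E_\eps=S(\omega_\eps)-\pi^*S(\omega)-\eps^2h_S$, as in the proof of Proposition~\ref{prop:scalar-error}~(ii). Since $\mathscr D_{\omega_\eps,\psi}(\phi)$ is bilinear in $(\psi,\phi)$ and involves only $\nabla\psi$, $\Delta\psi$, $\phi$ and $\nabla\phi$, the plan is a pointwise product estimate on each weighted ball. I will feed in the $C^{2,\alpha}_{\delta-4}$ control of $E_\eps$ from \eqref{k2:eq:scalar-error}; this is why that estimate was proved in $C^{2,\alpha}$ rather than $C^{0,\alpha}$.

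Fix $x\in\widetilde X$, put $\rho=\tau(x)$, and work on $B_x$ with the rescaled metric $\widehat\omega=\rho^{-2}\omega_\eps$. By \eqref{k2:eq:scalar-error} and Definition~\ref{def:weighted-holder},
\[
  \norm{E_\eps}_{C^{2,\alpha}(B_x,\widehat\omega)}\le C\rho^{\delta-4}r_\eps^{4-\delta},
  \qquad
  \norm{\phi}_{C^{4,\alpha}(B_x,\widehat\omega)}\le C\rho^{\delta}\norm{\phi}_{C^{4,\alpha}_\delta}.
\]
Under the rescaling, $g_{\omega_\eps}^{i\bar j}\nabla_i E_\eps\nabla_{\bar j}\phi=\rho^{-2}\,\widehat g^{i\bar j}\widehat\nabla_iE_\eps\widehat\nabla_{\bar j}\phi$ and $\Delta_{\omega_\eps}E_\eps=\rho^{-2}\widehat\Delta E_\eps$. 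Hence
\[
  \norm{\mathscr D_{\omega_\eps,E_\eps}(\phi)}_{C^{0,\alpha}(B_x,\widehat\omega)}
  \le C\rho^{-2}\norm{E_\eps}_{C^{2,\alpha}(B_x,\widehat\omega)}\norm{\phi}_{C^{1,\alpha}(B_x,\widehat\omega)}
  \le C\rho^{2\delta-6}r_\eps^{4-\delta}\norm{\phi}_{C^{4,\alpha}_\delta}.
\]
Here I use that $C^{0,\alpha}$ is an algebra on balls of uniformly bounded geometry, which holds because $\widehat\omega$ has uniformly controlled geometry by Propositions~\ref{k2:prop:approximate-metric} and~\ref{gt:prop:approximate-metric}, exactly as in Lemma~\ref{lem:Schauder}.

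Multiplying by the weight $\rho^{-(\delta-4)}$ gives the local contribution
\[
  C\rho^{\delta-2}r_\eps^{4-\delta}\norm{\phi}_{C^{4,\alpha}_\delta}.
\]
Since $\delta-2<0$ and $\rho=\tau\ge\eps$, we have $\rho^{\delta-2}\le\eps^{\delta-2}$. Taking the supremum over $x$ yields \eqref{k2:eq:remainder-estimate}.

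The main point to check is that the $C^{2,\alpha}_{\delta-4}$ bound really controls $\Delta E_\eps$ in $C^{0,\alpha}$ with the scaling used above, namely second derivatives in the rescaled metric costing $\rho^{-2}$ when converted back. This is the content of \eqref{eq:weighted-derivative}. Everything else is bookkeeping, and no region-by-region analysis is needed because \eqref{k2:eq:scalar-error} is already global.
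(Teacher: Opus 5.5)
Your proof is correct and follows the paper's argument essentially line by line. You rescale on each weighted ball, bound $\phi,\nabla\phi$ and $\nabla e_\eps,\nabla^2 e_\eps$ through the weighted norms, extract the local factor $\rho^{\delta-2}$, replace it by $\eps^{\delta-2}$ using $\tau\ge\eps$ and $\delta<2$, and insert the $C^{2,\alpha}_{\delta-4}$ bound \eqref{k2:eq:scalar-error}; the only differences are cosmetic: you apply \eqref{k2:eq:scalar-error} first rather than last, and for $k=2$ the bounded-geometry input comes from Proposition~\ref{k2:prop:approximate-metric} alone, not Proposition~\ref{gt:prop:approximate-metric}.
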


\begin{proof}
Throughout this proof, \(k=2\). Let \(e_\eps=S(\omega_\eps)-\pi^*S(\omega)-\eps^2h_S\). On a weighted chart of scale \(\rho\), by the definition of weighted H\"older norm,
\[
  |\phi|+\rho|\nabla\phi|
  \le C\norm{\phi}_{C^{4,\alpha}_\delta}\rho^\delta,
  \qquad
  \rho|\nabla e_\eps|
  +\rho^2|\nabla^2e_\eps|
  \le C\norm{e_\eps}_{C^{2,\alpha}_{\delta-4}}\rho^{\delta-4}.
\]
It follows from \eqref{k2:eq:D-operator} that,
\[
  \rho^{4-\delta}
  \norm{
    \mathscr D_{\omega_\eps,e_\eps}(\phi)
  }_{
    C^{0,\alpha}
    (B_{\omega_\eps}(x,c\rho),\rho^{-2}\omega_\eps)
  }
  \le
  C\norm{\phi}_{C^{4,\alpha}_\delta}
   \norm{e_\eps}_{C^{2,\alpha}_{\delta-4}}
   \rho^{\delta-2}.
\]
Since \(\rho=\tau_\eps(x)\ge c\eps\) and \(\delta-2<0\), taking the supremum over \(x\) gives
\[
  \norm{
    \mathscr D_{\omega_\eps,e_\eps}(\phi)
  }_{C^{0,\alpha}_{\delta-4}}
  \le
  C\eps^{\delta-2}
  \norm{\phi}_{C^{4,\alpha}_\delta}
  \norm{e_\eps}_{C^{2,\alpha}_{\delta-4}}.
\]
Finally, \eqref{k2:eq:scalar-error} gives
\[
  \norm{e_\eps}_{C^{2,\alpha}_{\delta-4}}\le Cr_\eps^{4-\delta},
\]
which proves \eqref{k2:eq:remainder-estimate}.
\end{proof}

We can now solve \eqref{eq:nonlinear-target}.

\begin{theorem}
\label{thm:one-step-approximation}
For every sufficiently small \(\eps>0\), the blow-up
\(\widetilde X=\Bl_SX\) admits a K\"ahler metric
\(\widehat\omega_\eps\) satisfying
\begin{equation}\label{eq:one-step-class}
  [\widehat\omega_\eps]
  =
  \pi^*[\omega]-\eps^2[E]
\end{equation}
and
\begin{equation}\label{eq:one-step-scalar}
  \norm{
    S(\widehat\omega_\eps)-\pi^*S(\omega)
  }_{C^0(\widetilde X)}
  \longrightarrow0.
\end{equation}
\end{theorem}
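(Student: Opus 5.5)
We will obtain \(\widehat\omega_\eps=\omega_\eps+\ddbar\phi\) by solving \eqref{eq:nonlinear-target} as a fixed point problem in \(C^{4,\alpha}_\delta(\widetilde X)\), using the inverse of \(\widetilde{\mathcal P}_\eps\) from Proposition~\ref{prop:linear-inverse}. Since \(\phi\) is a global smooth function, \(\widehat\omega_\eps\) has class \(\pi^*[\omega]-\eps^2[E]\) by Propositions~\ref{gt:prop:approximate-metric} and~\ref{k2:prop:approximate-metric}. The estimate \eqref{eq:one-step-scalar} will then follow from three facts. First, \(\norm{\phi}_{C^0_\delta}\to0\) forces \(\phi(q_j)\to0\), since the \(q_j\) stay away from \(S\). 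Second, \(\eps^2h_S\to0\) uniformly. Third, the solution makes \(S(\widehat\omega_\eps)\) equal exactly to the right-hand side of \eqref{eq:nonlinear-target}. Positivity of \(\widehat\omega_\eps\) will come from \(\norm{\phi}_{C^{4,\alpha}_2}\ll1\), which bounds \(\ddbar\phi\) relative to \(\omega_\eps\) on every weighted ball.

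\emph{Case \(k>2\).} Here \(\mathcal P_\eps=L_{\omega_\eps}\), so by \eqref{eq:scalar-expansion} equation \eqref{eq:nonlinear-target} reads
\[
\widetilde{\mathcal P}_\eps\phi=\pi^*S(\omega)-S(\omega_\eps)-Q_\eps(\phi).
\]
Define \(\mathcal N(\phi)=\widetilde{\mathcal P}_\eps^{-1}\bigl(\pi^*S(\omega)-S(\omega_\eps)-Q_\eps(\phi)\bigr)\) on the ball \(\{\norm{\phi}_{C^{4,\alpha}_\delta}\le 2C\Theta_\eps\}\). By \eqref{gt:eq:inverse-bound} and \eqref{gt:eq:scalar-error-estimate}, \(\norm{\mathcal N(0)}\le C\Theta_\eps\). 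By Lemma~\ref{lem:quadratic} and \eqref{eq:Cdelta-C2}, the Lipschitz constant of \(\mathcal N\) on this ball is at most \(C\eps^{\delta-2}\Theta_\eps\), which tends to zero by \eqref{gt:eq:Theta-limits}. Again by \eqref{eq:Cdelta-C2}, every \(\phi\) in the ball satisfies \(\norm{\phi}_{C^{4,\alpha}_2}\le C\eps^{\delta-2}\Theta_\eps\to0\), so the smallness hypotheses of Lemma~\ref{lem:quadratic} hold. The contraction mapping principle therefore gives a fixed point with \(\norm{\phi}_{C^0_\delta}\le C\Theta_\eps\to0\).

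\emph{Case \(k=2\).} Here \(\mathcal P_\eps\neq L_{\omega_\eps}\). Put \(e_\eps=S(\omega_\eps)-\pi^*S(\omega)-\eps^2h_S\). By \eqref{k2:eq:adjoint-difference}, \(L_{\omega_\eps}=L^*_{\omega_\eps}+\mathscr D_{\omega_\eps,S(\omega_\eps)}\), and \(\mathscr D_{\Omega,\psi}\) is linear in \(\psi\). Hence
\[
L_{\omega_\eps}=\mathcal P_\eps+\mathscr D_{\omega_\eps,e_\eps},
\]
and the equation becomes
\[
\widetilde{\mathcal P}_\eps\phi=-e_\eps-\mathscr D_{\omega_\eps,e_\eps}(\phi)-Q_\eps(\phi).
\]
Take the ball of radius \(2C\eps^\delta r_\eps^{4-\delta}\), which tends to zero by \eqref{k2:eq:error-limits}. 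The inverse bound \eqref{k2:eq:linear-inverse} costs a factor \(\eps^\delta\). The contraction constant thus has two parts:
\begin{itemize}
\item from Lemma~\ref{k2:lem:remainder-estimate}, \(C\eps^{\delta}\eps^{\delta-2}r_\eps^{4-\delta}\);
\item from Lemma~\ref{lem:quadratic} with \eqref{eq:Cdelta-C2}, \(C\eps^\delta\cdot\eps^{\delta-2}\cdot\eps^\delta r_\eps^{4-\delta}\).
\end{itemize}
The second part is smaller, being the first multiplied by \(\eps^\delta r_\eps^{4-\delta}\to0\). The first part equals \(\eps^{2\delta-2+\beta(4-\delta)}\). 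Its exponent is positive exactly when \eqref{k2:eq:nonlinear-exponent} holds with room to spare. I expect the main obstacle is this exponent bookkeeping: \eqref{k2:eq:nonlinear-exponent} controls \(3\delta-2+\beta(4-\delta)\), which is smaller than \(2\delta-2+\beta(4-\delta)\) because \(\delta<0\), so it suffices. The \(C^{4,\alpha}_2\) smallness \(\eps^{2\delta-2}r_\eps^{4-\delta}\to0\) follows from the same inequality, and so does \(\eps^{\delta}\cdot\eps^{\delta-2}\cdot\) (radius) \(<1\). With \(\delta\) chosen as in \eqref{k2:eq:nonlinear-delta-choice}, the contraction closes. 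This yields \(\phi\) with \(\norm{\phi}_{C^0_\delta}\le C\eps^\delta r_\eps^{4-\delta}\to0\), and the conclusion follows as in the first paragraph.
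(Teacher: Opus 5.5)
Your proposal is correct and follows the paper's proof essentially verbatim: the same contraction maps built from $\widetilde{\mathcal P}_\eps^{-1}$, the same splitting $L_{\omega_\eps}=\mathcal P_\eps+\mathscr D_{\omega_\eps,e_\eps}$ in codimension two, and balls of radius comparable to $\Theta_\eps$ and to $\eps^\delta r_\eps^{4-\delta}$.

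There is one bookkeeping slip in the case $k=2$. The quadratic contraction coefficient $\eps^{3\delta-2}r_\eps^{4-\delta}$ equals the $\mathscr D$-coefficient $\eps^{2\delta-2}r_\eps^{4-\delta}$ times $\eps^{\delta}\to\infty$, not times $\eps^\delta r_\eps^{4-\delta}$. So the quadratic term is the dominant one, and the condition $3\delta-2+\beta(4-\delta)>0$ is needed precisely for it. Your final check that $\eps^\delta\cdot\eps^{\delta-2}\cdot(\text{radius})$ is small supplies exactly this, so the contraction still closes.
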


\begin{proof}
Assume first that \(k>2\), and let \(\Theta_\eps=r_\eps^{3-\delta}+\eps^{2k-2}r_\eps^{4-\delta-2k}\). In this case, equation \eqref{eq:nonlinear-target} is equivalent to
\[
  \widetilde{\mathcal P}_\eps\phi
  =
  \pi^*S(\omega)-S(\omega_\eps)-Q_\eps(\phi).
\]
Define
\begin{equation}\label{gt:eq:fixed-map}
  \mathcal N_\eps(\phi)
  =
  \widetilde{\mathcal P}_\eps^{-1}\bigl(
    \pi^*S(\omega)-S(\omega_\eps)-Q_\eps(\phi)
  \bigr).
\end{equation}
For a constant \(A>0\), let
\[
  \mathcal B_\eps
  =
  \left\{
    \phi\in C^{4,\alpha}_\delta(\widetilde X):
    \norm{\phi}_{C^{4,\alpha}_\delta}
    \le A\Theta_\eps
  \right\}.
\]
For \(\phi\in\mathcal B_\eps\),
\eqref{gt:eq:scalar-error-estimate},
\eqref{gt:eq:inverse-bound}, \eqref{eq:quadratic-single}, and
\eqref{eq:Cdelta-C2} give
\begin{align*}
  \norm{\mathcal N_\eps(\phi)}_{C^{4,\alpha}_\delta}
  &\le
  C\Theta_\eps
  +
  C\norm{\phi}_{C^{4,\alpha}_2}
   \norm{\phi}_{C^{4,\alpha}_\delta}\\
  &\le
  \left(
    C+CA^2\eps^{\delta-2}\Theta_\eps
  \right)\Theta_\eps.
\end{align*}
Choose \(A>2C\). By \eqref{gt:eq:Theta-limits}, \(\mathcal N_\eps\) maps \(\mathcal B_\eps\) into itself for all sufficiently small \(\eps\). For \(\phi,\psi\in\mathcal B_\eps\), \eqref{eq:quadratic-difference}, \eqref{eq:Cdelta-C2}, and \eqref{gt:eq:inverse-bound} similarly give
\[
  \norm{
    \mathcal N_\eps(\phi)-\mathcal N_\eps(\psi)
  }_{C^{4,\alpha}_\delta}
  \le
  CA\eps^{\delta-2}\Theta_\eps
  \norm{\phi-\psi}_{C^{4,\alpha}_\delta}.
\]
The coefficient tends to zero by \eqref{gt:eq:Theta-limits}. Hence \(\mathcal N_\eps\) is a contraction map and has a fixed point \(\phi_\eps\in\mathcal B_\eps\), satisfying
\begin{equation}\label{gt:eq:main-phi-bound}
  \norm{\phi_\eps}_{C^{4,\alpha}_\delta}
  \le A\Theta_\eps,
  \qquad
  \norm{\phi_\eps}_{C^{4,\alpha}_2}
  \le
  CA\eps^{\delta-2}\Theta_\eps
  \longrightarrow0.
\end{equation}
Consequently, \(\widehat\omega_\eps=\omega_\eps+\ddbar\phi_\eps\) is K\"ahler and has the class in \eqref{eq:one-step-class}, by \eqref{gt:eq:class}. Equation \eqref{eq:nonlinear-target} then gives
\begin{equation}\label{gt:eq:exact-scalar}
  S(\widehat\omega_\eps)
  =
  \pi^*S(\omega)
  +
  \sum_{j=1}^N\phi_\eps(q_j)\psi_j.
\end{equation}
Because the points \(q_j\) are away from \(S\), by \eqref{gt:eq:Theta-limits}
\[
  |\phi_\eps(q_j)|
  \le
  C\norm{\phi_\eps}_{C^0_\delta}
  \le
  CA\Theta_\eps
  \longrightarrow0.
\]
Since \(\psi_j\) are fixed and smooth, \eqref{gt:eq:exact-scalar} proves \eqref{eq:one-step-scalar} when \(k>2\).

Assume now that \(k=2\). Again, let \(e_\eps =S(\omega_\eps)-\pi^*S(\omega)-\eps^2h_S\). Equations \eqref{k2:eq:adjoint-difference} and \eqref{eq:case-operator} give
\begin{equation}\label{k2:eq:proof-splitting}
  L_{\omega_\eps}
  =
  \mathcal P_\eps+\mathscr D_{\omega_\eps,e_\eps}.
\end{equation}
Using \eqref{eq:scalar-expansion} and
\eqref{k2:eq:proof-splitting}, the codimension-two equation in
\eqref{eq:nonlinear-target} is equivalent to
\begin{equation}\label{k2:eq:fixed-equation}
  \widetilde{\mathcal P}_\eps\phi
  =
  -e_\eps-Q_\eps(\phi)-\mathscr D_{\omega_\eps,e_\eps}\phi.
\end{equation}
Define
\begin{equation}\label{k2:eq:fixed-map}
  \mathcal N_\eps(\phi)
  =
  \widetilde{\mathcal P}_\eps^{-1}\bigl(
    -e_\eps-Q_\eps(\phi)-\mathscr D_{\omega_\eps,e_\eps}\phi
  \bigr).
\end{equation}
Choose \(C_0>0\). Let \(A_\eps=C_0\eps^\delta r_\eps^{4-\delta}\), and let
\[
  \mathcal B_\eps
  =
  \left\{
    \phi\in C^{4,\alpha}_\delta(\widetilde X):
    \norm{\phi}_{C^{4,\alpha}_\delta}\le A_\eps
  \right\}.
\]
For every \(\phi\in\mathcal B_\eps\),
\eqref{eq:Cdelta-C2} gives
\begin{equation}\label{k2:eq:Kahler-small}
  \norm{\phi}_{C^{4,\alpha}_2}
  \le
  C\eps^{2\delta-2}r_\eps^{4-\delta}
  \longrightarrow0.
\end{equation}
Indeed, the exponent \(2\delta-2+\beta(4-\delta)\) is larger than the positive exponent in \eqref{k2:eq:nonlinear-exponent} by \(-\delta>0\). Thus \(\omega_\eps+\ddbar\phi\) is K\"ahler for every
\(\phi\in\mathcal B_\eps\) and all sufficiently small \(\eps\).

At \(\phi=0\), \eqref{k2:eq:scalar-error} and \eqref{k2:eq:linear-inverse} give
\[
  \norm{\mathcal N_\eps(0)}_{C^{4,\alpha}_\delta}
  \le
  C\eps^\delta r_\eps^{4-\delta}
  \le
  \frac12A_\eps
\]
after choosing \(C_0\) sufficiently large. For \(\phi,\psi\in\mathcal B_\eps\), \eqref{k2:eq:linear-inverse}, \eqref{eq:quadratic-difference}, \eqref{eq:Cdelta-C2}, and \eqref{k2:eq:remainder-estimate} give
\begin{align}
  \norm{
    \widetilde{\mathcal P}_\eps^{-1}\bigl(Q_\eps(\phi)-Q_\eps(\psi)\bigr)
  }_{C^{4,\alpha}_\delta}
  &\le
  C\eps^{3\delta-2}r_\eps^{4-\delta}
  \norm{\phi-\psi}_{C^{4,\alpha}_\delta},
  \label{k2:eq:fixed-quadratic}\\
  \norm{
    \widetilde{\mathcal P}_\eps^{-1} \mathscr D_{\omega_\eps,e_\eps}(\phi-\psi)
  }_{C^{4,\alpha}_\delta}
  &\le
  C\eps^{2\delta-2}r_\eps^{4-\delta}
  \norm{\phi-\psi}_{C^{4,\alpha}_\delta}.
  \label{k2:eq:fixed-remainder}
\end{align}
The coefficient in \eqref{k2:eq:fixed-quadratic} tends to zero by \eqref{k2:eq:nonlinear-exponent}. The exponent in \eqref{k2:eq:fixed-remainder} is larger than the exponent in \eqref{k2:eq:fixed-quadratic} by \(-\delta>0\), so its coefficient also tends to zero. Consequently, \(\mathcal N_\eps\) is a contraction map for sufficiently small \(\eps\). Together with the estimate at \(\phi=0\), this shows that \(\mathcal N_\eps\) maps \(\mathcal B_\eps\) into itself.

Let \(\phi_\eps\in\mathcal B_\eps\) be the unique fixed point. Then
\begin{equation}\label{k2:eq:main-phi-bound}
  \norm{\phi_\eps}_{C^{4,\alpha}_\delta}
  \le
  C\eps^\delta r_\eps^{4-\delta},
  \qquad
  \norm{\phi_\eps}_{C^{4,\alpha}_2}
  \longrightarrow0.
\end{equation}
Thus \(\widehat\omega_\eps=\omega_\eps+\ddbar\phi_\eps\) is K\"ahler and has the class in \eqref{eq:one-step-class}, by \eqref{k2:eq:class}. Finally, \eqref{eq:scalar-expansion}, \eqref{k2:eq:proof-splitting}, and \eqref{k2:eq:fixed-equation} give
\begin{align}
  S(\widehat\omega_\eps)
  &=
  S(\omega_\eps)
  +\mathcal P_\eps\phi_\eps
  +\mathscr D_{\omega_\eps,e_\eps}\phi_\eps
  +Q_\eps(\phi_\eps)\notag\\
  &=
  \pi^*S(\omega)+\eps^2h_S
  +\sum_{j=1}^N\phi_\eps(q_j)\psi_j.
  \label{k2:eq:exact-scalar}
\end{align}
Since the points \(q_j\) are away from \(S\), by \eqref{k2:eq:error-limits},
\[
  |\phi_\eps(q_j)|
  \le
  C\norm{\phi_\eps}_{C^0_\delta}
  \le
  C\eps^\delta r_\eps^{4-\delta}
  \longrightarrow0.
\]
Because \(h_S\) and the \(\psi_j\) are fixed smooth functions, \eqref{k2:eq:exact-scalar} proves
\eqref{eq:one-step-scalar} when \(k=2\). This completes the proof.
\end{proof}

As a direct consequence, we have the following.
\begin{corollary}\label{cor:sign}
Let \((X,\omega)\) be a compact K\"ahler manifold with \(S(\omega)>0\) (respectively \(<0\)). Then $\widetilde X=\Bl_SX$ admits a K\"ahler metric \(\widetilde \omega\) in the class $\pi^*[\omega]-\eps^2[E]$ with \(S(\widetilde \omega)>0\) (respectively \(<0\)) for every $\eps>0$ sufficiently small.
\end{corollary}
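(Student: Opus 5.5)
Corollary~\ref{cor:sign} follows directly from Theorem~\ref{thm:one-step-approximation}; we only have to turn uniform closeness of scalar curvatures into a sign. Assume $S(\omega)>0$; the negative case is identical after reversing inequalities. Since $X$ is compact and $S(\omega)$ is continuous, set $s_0:=\min_X S(\omega)>0$. As $\pi:\widetilde X\to X$ is surjective, $\pi^*S(\omega)\ge s_0$ pointwise on all of $\widetilde X$, including along $E$ (where $\pi^*S(\omega)$ takes the values of $S(\omega)$ on $S$).

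Next apply Theorem~\ref{thm:one-step-approximation} to $(X,\omega)$ and the centre $S$. This produces, for every sufficiently small $\eps>0$, a K\"ahler metric $\widehat\omega_\eps$ in the class $\pi^*[\omega]-\eps^2[E]$ with
\[
  \norm{S(\widehat\omega_\eps)-\pi^*S(\omega)}_{C^0(\widetilde X)}\longrightarrow 0 .
\]
Choose $\eps_0>0$ so that this norm is less than $s_0/2$ for all $0<\eps<\eps_0$. For such $\eps$,
\[
  S(\widehat\omega_\eps)\ \ge\ \pi^*S(\omega)-\tfrac{s_0}{2}\ \ge\ \tfrac{s_0}{2}\ >\ 0
\]
everywhere on $\widetilde X$. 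Setting $\widetilde\omega:=\widehat\omega_\eps$ gives the statement. In the negative case, replace $s_0$ by $-\max_X S(\omega)>0$ and obtain $S(\widetilde\omega)\le -s_0/2<0$.

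There is no real obstacle; the only point to check is the quantifier ``for every $\eps>0$ sufficiently small.'' Theorem~\ref{thm:one-step-approximation} is stated for every sufficiently small $\eps$, not merely along a sequence: the fixed-point construction works for each $\eps$ below a threshold, and the bounds $\Theta_\eps\to0$, respectively $\eps^\delta r_\eps^{4-\delta}\to0$, hold as $\eps\to0$ continuously. So the $C^0$ convergence is a genuine limit in $\eps$, and the threshold $\eps_0$ above is well defined. The argument covers both codimension cases, $k>2$ and $k=2$, because the theorem does.
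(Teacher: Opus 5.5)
Your proposal is correct and matches the paper, which states this corollary as a direct consequence of Theorem~\ref{thm:one-step-approximation}. The uniform $C^0$ convergence $S(\widehat\omega_\eps)\to\pi^*S(\omega)$ as $\eps\to0$, together with the strict bound on the compact manifold, gives the sign in both codimension cases.
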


\subsection{Iterated approximation for graph resolutions}
\label{sec:iterated-approximation}

We now iterate Theorem~\ref{thm:one-step-approximation} along the blow-ups appearing in a resolution of the graph of a meromorphic map. More precisely, we consider a finite sequence of nontrivial blow-ups
along smooth complex centres of codimension at least two.  We first show the following local smooth convergence away from the exceptional divisor that follows from the approximation construction.

\begin{lemma}
\label{lem:one-step-local-smooth}
Let $\pi:\widetilde  X\to X$ be the blow-up of a compact K\"ahler manifold $(X,\omega)$ along a smooth center of complex codimension at least two, and let $\widehat\omega_\eps$ be the metrics constructed in Theorem \ref{thm:one-step-approximation}.  Then, for every compact set $K\Subset\widetilde X\setminus\Exc(\pi)$, we have
\[
 \widehat\omega_\eps\longrightarrow \pi^*\omega
 \qquad\text{in }C^\infty(K).
\]
\end{lemma}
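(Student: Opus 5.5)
Fix a compact set $K\Subset\widetilde X\setminus\Exc(\pi)$. Since $\pi$ is a biholomorphism off $E$, $K$ corresponds to a compact subset of $X\setminus S$, so there is $d_K>0$ with $d\ge d_K$ on $K$. We write $\widehat\omega_\eps=\omega_\eps+\ddbar\phi_\eps$ and show separately that $\omega_\eps\to\pi^*\omega$ and $\ddbar\phi_\eps\to0$ in $C^\infty(K)$.

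\emph{The approximate metric.} Because $r_\eps\to0$, we have $K\subset\{d\ge2r_\eps\}$ for all small $\eps$.
\begin{itemize}
\item For $k>2$, Proposition~\ref{gt:prop:approximate-metric}\,(i) gives $\omega_\eps=\omega$ on $K$.
\item For $k=2$, \eqref{k2:eq:exterior-approximate-metric} gives $\omega_\eps=\omega+\eps^2\ddbar\Gamma$ on $K$. By Lemma~\ref{k2:lem:Green-correction}, $\Gamma$ is smooth on $X\setminus S$, so $\eps^2\ddbar\Gamma\to0$ in every $C^m(K)$.
\end{itemize}

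\emph{The correction $\phi_\eps$: a fixed Hölder norm.} On a neighbourhood $K'$ of $K$ with $d\ge d_K/2$, the weight satisfies $\tau\simeq1$. The scaled metric $\tau^{-2}\omega_\eps$ is uniformly equivalent to $\omega$ there, by the previous step. Definition~\ref{def:weighted-holder} then gives
\[
\norm{\phi_\eps}_{C^{4,\alpha}(K',\omega)}\le C_K\norm{\phi_\eps}_{C^{4,\alpha}_\delta}.
\]
The right-hand side is bounded by $A\Theta_\eps\to0$ by \eqref{gt:eq:main-phi-bound} when $k>2$, and by $C\eps^\delta r_\eps^{4-\delta}\to0$ by \eqref{k2:eq:main-phi-bound} and \eqref{k2:eq:error-limits} when $k=2$. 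Hence $\ddbar\phi_\eps\to0$ in $C^{2,\alpha}(K')$.

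\emph{Upgrading to $C^\infty$.} This is the only step needing care, and it uses elliptic bootstrapping. On $K'$, $\phi_\eps$ solves \eqref{eq:nonlinear-target}:
\[
S(\omega_\eps+\ddbar\phi_\eps)=f_\eps,
\]
where $f_\eps$ equals $\pi^*S(\omega)+\sum_j\phi_\eps(q_j)\psi_j$, plus $\eps^2h_S$ when $k=2$. The right-hand side $f_\eps$ is bounded in every $C^m(K')$: the coefficients $\phi_\eps(q_j)$ are bounded scalars and $h_S,\psi_j$ are fixed smooth functions.

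Write the equation in the Calabi form with $\Omega_\eps=\omega_\eps+\ddbar\phi_\eps$ and $F_\eps=\log(\Omega_\eps^n/\omega_\eps^n)$. It becomes the coupled system
\[
\Delta_{\Omega_\eps}F_\eps=\tr_{\Omega_\eps}\Ric(\omega_\eps)-f_\eps,
\qquad
\log\frac{(\omega_\eps+\ddbar\phi_\eps)^n}{\omega_\eps^n}=F_\eps.
\]
The data are as follows:
\begin{itemize}
\item $\Omega_\eps$ is uniformly bounded in $C^{2,\alpha}$ and uniformly equivalent to $\omega$.
\item $\omega_\eps$ is uniformly bounded in $C^\infty(K')$.
\item $f_\eps$ is bounded in $C^\infty(K')$.
\end{itemize}
Alternating Schauder estimates between the two equations on a shrinking family of neighbourhoods $K\Subset\cdots\Subset K'$ yields uniform $C^{m,\alpha}(K)$ bounds on $\phi_\eps$ for every $m$.

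Interpolating these bounds with $\norm{\phi_\eps}_{C^{4,\alpha}(K')}\to0$ (or arguing by Arzel\`a--Ascoli with a zero limit) gives $\phi_\eps\to0$ in $C^\infty(K)$. Combined with the first step, this yields $\widehat\omega_\eps\to\pi^*\omega$ in $C^\infty(K)$.
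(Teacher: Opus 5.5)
Your proof is correct and follows essentially the same route as the paper. You use the same decomposition $\widehat\omega_\eps=\omega_\eps+\ddbar\phi_\eps$, the same exterior-region identification of $\omega_\eps$, the same $C^{4,\alpha}(K')$ decay of $\phi_\eps$ from the weighted bounds (since $\tau\simeq1$ on $K'$), and elliptic bootstrapping of the scalar-curvature equation; the only cosmetic difference is that you bootstrap through the coupled second-order system (Laplacian equation for $F_\eps$ plus Monge--Amp\`ere) and then interpolate, whereas the paper invokes the uniform ellipticity of the fourth-order operator directly.
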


\begin{proof}
Choose \(K\Subset K'\Subset\widetilde X\setminus\Exc(\pi)\), and let \(\widehat\omega_\eps=\omega_\eps+\ddbar\phi_\eps\),
where $\omega_\eps$ is the approximate metric constructed in Section \ref{sec:approximate}. If the center has codimension greater than two, Proposition~\ref{gt:prop:approximate-metric} \textup{(i)} gives \(\omega_\eps=\pi^*\omega\) on \(K'\) for all sufficiently small \(\eps\). In codimension two, \(K'\) lies in the exterior region for all sufficiently small \(\eps\), and \eqref{k2:eq:exterior-approximate-metric} gives
\[
 \omega_\eps=\pi^*\omega+\eps^2\ddbar(\pi^*\Gamma)
 \qquad\text{on }K'.
\]
Since \(\Gamma\) is smooth away from the center, in both cases
\begin{equation}\label{eq:local-background-convergence}
 \omega_\eps\longrightarrow \pi^*\omega
 \qquad\text{in }C^\infty(K').
\end{equation}

By \eqref{eq:weight}, the weight \(\tau_\eps\) is bounded above and below by positive constants on \(K'\), independently of small \(\eps\).  Hence \eqref{eq:weighted-norm}, together with \eqref{gt:eq:main-phi-bound} and \eqref{gt:eq:Theta-limits} when \(k>2\), and with \eqref{k2:eq:main-phi-bound} and
\eqref{k2:eq:error-limits} when \(k=2\), gives
\[
 \phi_\eps\longrightarrow0
 \qquad\text{in }C^{4,\alpha}(K').
\]

It follows from \eqref{gt:eq:exact-scalar} and \eqref{k2:eq:exact-scalar} that
\begin{equation}\label{eq:one-step-local-scalar}
 S(\widehat\omega_\eps)-\pi^*S(\omega)
 =
 \begin{cases}
 \displaystyle\sum_{j=1}^N\phi_\eps(q_j)\psi_j,
     & k>2,\\[6pt]
 \displaystyle\eps^2h_S+
     \sum_{j=1}^N\phi_\eps(q_j)\psi_j,
     & k=2.
 \end{cases}
\end{equation}
The points \(q_j\) lie in a fixed compact subset away from the center. Equation~\eqref{eq:weighted-norm}, together with \eqref{gt:eq:main-phi-bound} and \eqref{gt:eq:Theta-limits} when \(k>2\),
and with \eqref{k2:eq:main-phi-bound} and \eqref{k2:eq:error-limits} when \(k=2\), gives \(\phi_\eps(q_j)\to0\).  Since \(h_S\) and the \(\psi_j\) are fixed smooth functions, the right-hand side of \eqref{eq:one-step-local-scalar} tends to zero in every \(C^p(K')\) norm.

In local holomorphic coordinates, the coefficient of the fourth derivatives of \(\phi_\eps\) in \(S(\omega_\eps+\ddbar\phi_\eps)\) is \(-(g_{\eps,\phi_\eps})^{i\bar j}(g_{\eps,\phi_\eps})^{p\bar q}\). Here \(g_{\eps,\phi_\eps}\) is the Hermitian matrix of \(\widehat\omega_\eps\). Thus \eqref{eq:local-background-convergence} and the \(C^{4,\alpha}(K')\)-convergence of \(\phi_\eps\) make \eqref{eq:one-step-local-scalar} uniformly elliptic on \(K'\). Elliptic bootstrapping then yields \(\phi_\eps\to0\) in \(C^\infty(K)\). Together with \eqref{eq:local-background-convergence}, this proves the lemma.
\end{proof}

\begin{proof}[Proof of Theorem~\ref{thm:iterated-approximation}]
Set $\omega_{0,\ell}:=\omega$.  Choose an exhaustion
\[
 K_1\Subset K_2\Subset\cdots\Subset
 X_J\setminus\operatorname{Exc}(\mu),
 \qquad
 \bigcup_{\ell\ge1}K_\ell
 =X_J\setminus\operatorname{Exc}(\mu).
\]
For $0\le j\le J$, let
\[
 \rho_j:=\mu_{j+1}\circ\cdots\circ\mu_J:X_J\longrightarrow X_j,
\]
where $\rho_J$ is the identity. For \(1\le j\le J\), denote by \(E_j\subset X_j\) the exceptional
divisor introduced by the blow-up \(\mu_j\), and by \(\widetilde E_j\) its total transform on \(X_J\). Since \(K_\ell\cap\operatorname{Exc}(\mu)=\varnothing\), the set \(\rho_j(K_\ell)\) is a compact subset of
\(X_j\setminus E_j\).

Fix $\ell$. On \(K_\ell\), we use the \(C^p\) norms with respect to the K\"ahler metric \(\mu^*\omega\). Starting with $j=1$ and proceeding to $j=J$, apply Theorem \ref{thm:one-step-approximation} to \((X_{j-1},\omega_{j-1,\ell})\) and the blow-up \(\mu_j:X_j\to X_{j-1}\).  Once \(\eps_{1,\ell},\ldots,\eps_{j-1,\ell}\) have been fixed, the metric \(\omega_{j-1,\ell}\) is fixed.  Therefore \eqref{eq:one-step-scalar} and Lemma~\ref{lem:one-step-local-smooth}, applied on \(\rho_j(K_\ell)\), allow us to choose
\(0<\eps_{j,\ell}<\ell^{-1}\) so that the resulting metric \(\omega_{j,\ell}\) satisfies
\begin{align}
 \norm{S(\omega_{j,\ell})
 -\mu_j^*S(\omega_{j-1,\ell})}_{C^0(X_j)}
 &\le\frac1{J\ell},                                      
 \label{eq:stage-scalar-error}\\
 \norm{\rho_j^*\omega_{j,\ell}
 -\rho_{j-1}^*\omega_{j-1,\ell}}
 _{C^\ell(K_\ell)}
 &\le\frac1{J\ell}.                                     
 \label{eq:stage-metric-error}
\end{align}
Here \(\rho_{j-1}=\mu_j\circ\rho_j\). Write $\omega_\ell:=\omega_{J,\ell}$. Pulling \eqref{eq:stage-scalar-error} to \(X_J\) gives 
\[
 S(\omega_\ell)-\mu^*S(\omega)
 =\sum_{j=1}^J\rho_j^*
 \bigl(S(\omega_{j,\ell})
 -\mu_j^*S(\omega_{j-1,\ell})\bigr).
\]
Since each \(\rho_j\) is surjective, pullback preserves the \(C^0\) norm of a function.  Hence \eqref{eq:stage-scalar-error} implies
\[
 \norm{S(\omega_\ell)-\mu^*S(\omega)}_{C^0(X_J)}
 \le\frac1\ell.
\]
This proves \eqref{eq:intro-iterated-scalar}.  Similarly, on \(K_\ell\),
\[
 \omega_\ell-\mu^*\omega
 =\sum_{j=1}^J
 \bigl(\rho_j^*\omega_{j,\ell}
 -\rho_{j-1}^*\omega_{j-1,\ell}\bigr),
\]
so \eqref{eq:stage-metric-error} gives
\[
 \norm{\omega_\ell-\mu^*\omega}_{C^\ell(K_\ell)}
 \le\frac1\ell.
\]
If \(K\Subset X_J\setminus\operatorname{Exc}(\mu)\) and \(p\ge0\), then \(K\subset K_\ell\) and \(p\le\ell\) for all sufficiently large \(\ell\). The last estimate therefore proves \eqref{eq:intro-iterated-smooth}.

Finally, \eqref{eq:one-step-class} gives, at the \(j\)-th step,
\[
 [\omega_{j,\ell}]
 =\mu_j^*[\omega_{j-1,\ell}]-\eps_{j,\ell}^2[E_j].
\]
Pulling these identities back to \(X_J\), using \([\widetilde E_j]=\rho_j^*[E_j]\), and summing gives
\[
 [\omega_\ell]
 =\mu^*[\omega]-\sum_{j=1}^J
 \eps_{j,\ell}^2[\widetilde E_j].
\]
Since $\eps_{j,\ell}<\ell^{-1}$ for every $j$, the parameter vectors tend to zero. This completes the proof.
\end{proof}

\section{The weighted level-set method}
\label{sec:level-set-method}

In this section, we develop the level-set method for K\"ahler fibrations whose base has pseudo-effective canonical bundle.

\subsection{Holomorphic fibrations and vertical geometry}
Let \(\pi:X^{m+r}\longrightarrow Y^m\) be a surjective holomorphic map between compact K\"ahler manifolds.  Assume that there is a proper analytic subset \(\Delta\subset Y\) such that, with
\[
        Y^\circ:=Y\setminus\Delta,
        \qquad X^\circ:=\pi^{-1}(Y^\circ),
\]
the restriction
\[
        \pi^\circ:=\pi|_{X^\circ}:X^\circ\longrightarrow Y^\circ
\]
is a holomorphic submersion. For \(y\in Y^\circ\), set
\[
        F_y:=\pi^{-1}(y), \qquad \omega_y:=\omega|_{F_y}
\]
and regard \(F_y\) as endowed with the induced K\"ahler metric \(\omega_y\). Since \(\Delta\) is a proper analytic subset, both \(\Delta\) and \(X\setminus X^\circ\) have measure zero.  Therefore all integral identities below may be computed over the regular locus.

On \(X^\circ\), define the vertical tangent bundle
\[
        V:=\ker \partial\pi^\circ\subset T^{1,0}X|_{X^\circ},
\]
and let
\(N:=V^{\perp_\omega}\)
be the \(\omega\)-orthogonal complement.  Thus
\[
        T^{1,0}X|_{X^\circ}=V\oplus N.
\]
Let \(e_1,\ldots,e_r,e_{r+1},\ldots,e_{m+r}\) be an \(\omega\)-unitary frame adapted to this splitting, so that \(e_1,\ldots,e_r\) span \(V\), while \(e_{r+1},\ldots,e_{m+r}\) span \(N\).  We define
\[
        \tr_V\Ric(\omega)
        :=
        \sum_{i=1}^r
        \Ric(\omega)(e_i,\overline{e_i}),
\]
and
\[
        \tr_N\Ric(\omega)
        :=
        \sum_{\alpha=1}^m
        \Ric(\omega)(e_{r+\alpha},\overline{e_{r+\alpha}}).
\]
Then, pointwise on \(X^\circ\),
\begin{equation}\label{eq:trace-splitting-final}
        S(\omega)
        =
        \tr_V\Ric(\omega)+\tr_N\Ric(\omega).
\end{equation}

Since \(\pi^\circ\) is a holomorphic submersion, along every regular fibre \(F_y\) there is a short exact sequence
\[
        0\longrightarrow T^{1,0}F_y
        \longrightarrow T^{1,0}X|_{F_y}
        \longrightarrow T^{1,0}X|_{F_y}/T^{1,0}F_y:=N_{F_y/X}
        \longrightarrow 0 .
\]
Taking the dual sequence and then taking the determinant gives
\[
        K_X|_{F_y}
        =
        K_{F_y}\otimes (\det N_{F_y/X})^* .
\]
Since \(N_{F_y/X}\) is trivial, we have
\[
        c_1(X)|_{F_y}=c_1(F_y).
\]
Therefore
\[
        [\Ric(\omega)|_{F_y}]
        =
        2\pi c_1(X)|_{F_y}
        =
        2\pi c_1(F_y)
        =
        [\Ric(\omega_y)].
\]
By the \(\partial\bar\partial\)-lemma, the difference
\[
        \Ric(\omega)|_{F_y}-\Ric(\omega_y)
\]
is \(\sqrt{-1}\partial\bar\partial\)-exact. Wedging with \(\omega_y^{r-1}\) and using Stokes' theorem, we obtain
\begin{equation}\label{eq:vertical-trace-fibre-scalar-final}
        \int_{F_y}\tr_V\Ric(\omega)\,\omega_y^r
        =
        \int_{F_y}S(\omega_y)\,\omega_y^r.
\end{equation}

The complex differential of \(\pi\) is a holomorphic bundle morphism
\[
        \partial\pi:T^{1,0}X\longrightarrow \pi^*T^{1,0}Y.
\]
Taking its \(m\)-th exterior power gives
\[
        \Lambda^m\partial\pi:
        \Lambda^mT^{1,0}X
        \longrightarrow
        \pi^*\Lambda^mT^{1,0}Y.
\]
Equivalently, it defines a holomorphic section
\[
        s_\pi:=\Lambda^m\partial\pi
        \in
        \Gamma\left(
        \Lambda^m(T^{1,0}X)^*
        \otimes
        \pi^*\Lambda^mT^{1,0}Y
        \right).
\]
We define the complex Jacobian of \(\pi\) by
\[
        J_\pi:=|s_\pi|^2_{\omega,\eta}.
\]
At a point \(x\in X^\circ\), choose the adapted \(\omega\)-unitary frame \(e_1,\ldots,e_r,e_{r+1},\ldots,e_{m+r}\) as above.  Choose an \(\eta\)-unitary frame \(\varepsilon_1,\ldots,\varepsilon_m\) of \(T^{1,0}_{\pi(x)}Y\) such that
\[
        \partial\pi(e_{r+\alpha})
        =
        \lambda_\alpha\varepsilon_\alpha,
        \qquad
        \lambda_\alpha>0,
        \qquad
        \alpha=1,\ldots,m.
\]

Let \((\mathcal E,h)\) be a Hermitian holomorphic vector bundle over a K\"ahler manifold \((Z,\omega)\), and let \(\Theta(\mathcal E,h)\) be its Chern curvature.  If \(s\) is a holomorphic section of \(\mathcal E\), then
\begin{equation}\label{eq:chern-bochner}
        \Delta_\omega |s|_h^2
        =
        |\nabla s|^2
        -
        \left\langle
        \sqrt{-1}\tr_\omega\Theta(\mathcal E,h)s,s
        \right\rangle_h .
\end{equation}
This is the standard Chern-Bochner identity for holomorphic sections; see, for example, \cite[eq. (4)]{KobayashiWu70}.

\begin{lemma}\label{lem:jac-bochner}
On \(X^\circ\), one has
\begin{equation}\label{eq:jac-bochner}
        \Delta_\omega J_\pi
        =
        |\nabla s_\pi|^2
        +
        J_\pi\,\tr_N\Ric(\omega)
        -
        J_\pi\,\tr_\omega\pi^*\Ric(\eta).
\end{equation}
\end{lemma}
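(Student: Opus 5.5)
We will apply the Chern--Bochner identity \eqref{eq:chern-bochner} to the holomorphic section \(s_\pi\) of the bundle
\[
  \mathcal E:=\Lambda^m(T^{1,0}X)^*\otimes\pi^*\Lambda^mT^{1,0}Y ,
\]
equipped with the Hermitian metric induced by \(\omega\) on the first factor and by \(\pi^*\eta\) on the second. Since \(J_\pi=|s_\pi|^2\), identity \eqref{eq:chern-bochner} gives
\[
  \Delta_\omega J_\pi=|\nabla s_\pi|^2-\langle\ii\,\tr_\omega\Theta(\mathcal E)s_\pi,s_\pi\rangle .
\]
It then remains to compute the curvature term at a point of \(X^\circ\), using the adapted frames \(e_1,\dots,e_{m+r}\) and \(\varepsilon_1,\dots,\varepsilon_m\).

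The curvature splits as
\[
  \Theta(\mathcal E)=\Theta(\Lambda^m T^*X)\otimes1+1\otimes\pi^*\Theta(\Lambda^mTY).
\]
The line bundle \(\pi^*\Lambda^mTY=\pi^*K_Y^{-1}\) has curvature \(\pi^*\Ric(\eta)\) with our sign conventions. Its contribution to \(-\langle\ii\,\tr_\omega\Theta s,s\rangle\) is therefore \(-J_\pi\,\tr_\omega\pi^*\Ric(\eta)\), which is the last term of \eqref{eq:jac-bochner}.

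For the first factor, the curvature of \(\Lambda^mT^*X\) acts on a decomposable \(m\)-covector \(e^{I}=e^{i_1}\wedge\cdots\wedge e^{i_m}\) as minus the sum of the diagonal curvature entries over the indices in \(I\), plus off-diagonal mixing terms. In the adapted frame,
\[
  s_\pi=\lambda_1\cdots\lambda_m\; e^{r+1}\wedge\cdots\wedge e^{r+m}\otimes(\varepsilon_1\wedge\cdots\wedge\varepsilon_m).
\]
This holds because \(\partial\pi\) kills \(V\), so \(s_\pi\) is a multiple of the single covector \(e^N:=e^{r+1}\wedge\cdots\wedge e^{m+r}\).

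Hence only the diagonal entry \(\langle\Theta e^N,e^N\rangle\) enters the pairing. After taking \(\tr_\omega\) of the Chern curvature of \(T^*X\), this entry becomes
\[
  -\sum_{\alpha}\Ric(\omega)(e_{r+\alpha},\overline{e_{r+\alpha}})=-\tr_N\Ric(\omega).
\]
Here the trace of the curvature endomorphism of \(TX\) in the \(\omega\)-direction is the Ricci endomorphism, since \(\omega\) is Kähler. The off-diagonal mixing terms vanish in the pairing with \(s_\pi\), and the sign flips twice (dual bundle, and the minus in the identity). Together these give \(+J_\pi\tr_N\Ric(\omega)\).

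The main point to check carefully is the sign and normalization bookkeeping: that \(\ii\,\tr_\omega\Theta(TX,\omega)\) equals the Ricci endomorphism with the convention \(\Ric=-\ddbar\log\det\omega\), and that the induced curvature on \(\Lambda^mT^*X\) pairs with \(e^N\) to give exactly the partial trace over \(N\). We would first verify the computation on a product \(X=F\times Y\) with \(\pi\) the projection, where \(J_\pi\) is constant and both sides can be computed by hand. This fixes all signs.

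Since everything is pointwise and tensorial, the orthogonal decomposition \(V\oplus N\) need not be holomorphic. Only the frame at the point is used, so no second fundamental form terms appear beyond those already contained in \(|\nabla s_\pi|^2\).
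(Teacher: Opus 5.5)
Your proposal is correct and follows the paper's proof essentially step by step. You apply the Chern--Bochner identity \eqref{eq:chern-bochner} to \(s_\pi\) in \(\Lambda^m(T^{1,0}X)^*\otimes\pi^*K_Y^{-1}\), split the curvature over the two tensor factors, and evaluate at a point in the adapted frame where \(s_\pi\) is a multiple of \(\theta^{r+1}\wedge\cdots\wedge\theta^{m+r}\otimes\varepsilon_1\wedge\cdots\wedge\varepsilon_m\). There the dual factor contributes \(+J_\pi\tr_N\Ric(\omega)\), the pulled-back anticanonical factor contributes \(-J_\pi\tr_\omega\pi^*\Ric(\eta)\), and your extra remarks (the K\"ahler identity \(\tr_\omega\ii\Theta(T^{1,0}X)=\Ric\) as an endomorphism, and off-diagonal terms dropping out of the pairing) are precisely the sign checks the paper leaves implicit.
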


\begin{proof}
Apply \eqref{eq:chern-bochner} to the holomorphic section
\[
        s_\pi\in\Gamma(\mathcal E),
        \qquad
        \mathcal E=
        \Lambda^m(T^{1,0}X)^*
        \otimes
        \pi^*\Lambda^mT^{1,0}Y.
\]
Let \(\theta^1,\dots,\theta^{m+r}\) be the coframe dual to \(e_1,\dots,e_{m+r}\).  At \(x\),
\[
        s_\pi
        =
        \lambda_1\cdots\lambda_m\,
        \theta^{r+1}\wedge\cdots\wedge\theta^{m+r}
        \otimes
        \varepsilon_1\wedge\cdots\wedge\varepsilon_m .
\]
Hence
\[
        J_\pi=|s_\pi|^2=\lambda_1^2\cdots\lambda_m^2.
\]

The curvature of \(\mathcal E\) is the sum of the curvatures of the two tensor factors.  First, the curvature of \((T^{1,0}X)^*\) is the negative transpose of the curvature of \(T^{1,0}X\).  Therefore, one has
\[
        \left\langle
        \sqrt{-1}\tr_\omega
        \Theta\bigl(\Lambda^m(T^{1,0}X)^*\bigr)s_\pi,
        s_\pi
        \right\rangle
        =
        -J_\pi
        \sum_{\alpha=1}^m
        \Ric(\omega)(e_{r+\alpha},\overline{e_{r+\alpha}}).
\]
Second, \(\Lambda^mT^{1,0}Y=K_Y^{-1}\), and the Chern curvature of \(K_Y^{-1}\) is represented by the Ricci form \(\Ric(\eta)\).  Pulling back to \(X\) and contracting with \(\omega\), we get
\[
        \left\langle
        \sqrt{-1}\tr_\omega
        \Theta\bigl(\pi^*\Lambda^mT^{1,0}Y\bigr)s_\pi,
        s_\pi
        \right\rangle
        =
        J_\pi
        \sum_{i=1}^{m+r}
        \Ric(\eta)\bigl(
        \partial\pi(e_i),
        \overline{\partial\pi(e_i)}
        \bigr).
\]
Since \(\partial\pi(e_i)=0\) for \(1\le i\le r\), and \( \partial\pi(e_{r+\alpha}) = \lambda_\alpha\varepsilon_\alpha\), this becomes
\[
        J_\pi
        \sum_{\alpha=1}^m
        \lambda_\alpha^2
        \Ric(\eta)(\varepsilon_\alpha,\overline{\varepsilon_\alpha}).
\]
Substituting the two curvature contributions into \eqref{eq:chern-bochner} gives \eqref{eq:jac-bochner}.
\end{proof}

We shall also use the following coarea formula for the holomorphic submersion \(\pi^\circ:(X^\circ,\omega)\to(Y^\circ,\eta)\).

\begin{lemma}
For every smooth function \(\Phi\in C^\infty(X)\),
\begin{equation}\label{eq:coarea-final}
        \int_X J_\pi\,\Phi\,\omega^{m+r}
        =
        \frac{(m+r)!}{r!\,m!}
        \int_{Y^\circ}
        \left(
        \int_{F_y}\Phi\,\omega_y^r
        \right)
        \eta^m .
\end{equation}
\end{lemma}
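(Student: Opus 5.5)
The identity is a pointwise computation of volume forms on \(X^\circ\) combined with Fubini's theorem for the submersion \(\pi^\circ\). Since \(X\setminus X^\circ\) and \(\Delta\) have measure zero, both sides may be computed over the regular loci. The plan is to show
\[
  J_\pi\,\omega^{m+r}
  =\frac{(m+r)!}{r!\,m!}\,\omega^r\wedge\pi^*\eta^m
  \qquad\text{on }X^\circ.
\]
Once this holds, the classical fibre-integration formula \(\int_{X^\circ}\Phi\,\omega^r\wedge\pi^*\eta^m=\int_{Y^\circ}\bigl(\int_{F_y}\Phi\,\omega_y^r\bigr)\eta^m\) gives \eqref{eq:coarea-final}. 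The fibre-integration formula itself is just Fubini in local product coordinates \((x',y)\) adapted to the submersion, extended by a partition of unity.

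For the pointwise identity, fix \(x\in X^\circ\). I would use the adapted \(\omega\)-unitary frame \(e_1,\dots,e_{m+r}\) with dual coframe \(\theta^i\), and the \(\eta\)-unitary frame \(\varepsilon_\alpha\) with dual coframe \(\epsilon^\alpha\), chosen so that \(\partial\pi(e_{r+\alpha})=\lambda_\alpha\varepsilon_\alpha\). In the proof of Lemma~\ref{lem:jac-bochner} it was shown that \(J_\pi=\prod_\alpha\lambda_\alpha^2\).

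Next, \(\pi^*\epsilon^\alpha\) annihilates \(V\), and \(\pi^*\epsilon^\alpha(e_{r+\beta})=\lambda_\alpha\delta_{\alpha\beta}\). Hence \(\pi^*\epsilon^\alpha=\lambda_\alpha\theta^{r+\alpha}\), and
\[
  \pi^*\eta=\ii\sum_\alpha\lambda_\alpha^2\,\theta^{r+\alpha}\wedge\bar\theta^{r+\alpha},
  \qquad
  \pi^*\eta^m=m!\,J_\pi\prod_\alpha\bigl(\ii\,\theta^{r+\alpha}\wedge\bar\theta^{r+\alpha}\bigr).
\]
Since \(\omega=\ii\sum_i\theta^i\wedge\bar\theta^i\), wedging with the full horizontal product kills every horizontal term of \(\omega^r\). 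Only the vertical part survives:
\[
  \omega^r\wedge\pi^*\eta^m
  =r!\,m!\,J_\pi\prod_{i=1}^{m+r}\bigl(\ii\,\theta^i\wedge\bar\theta^i\bigr)
  =\frac{r!\,m!}{(m+r)!}\,J_\pi\,\omega^{m+r}.
\]
This is exactly the claimed identity. The restriction of \(\omega^r\) to \(F_y\) is \(\omega_y^r\), because the vertical frame is tangent to the fibre.

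I expect the only delicate point to be the measure-zero bookkeeping. The integrand on the left is smooth on all of \(X\), since \(J_\pi\) is the norm of a global holomorphic section, so removing \(X\setminus X^\circ\) changes nothing. On the right, \(y\mapsto\int_{F_y}\Phi\,\omega_y^r\) is bounded on \(Y^\circ\) by \(\sup|\Phi|\) times the fibre volume, and the fibre volume \([\omega]^r\cdot[F]\) is constant. Both sides are therefore finite and agree after exhausting \(Y^\circ\) by compact sets and applying monotone or dominated convergence to the positive and negative parts of \(\Phi\).
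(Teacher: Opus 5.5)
Your proposal is correct and follows the paper's proof. Both establish the pointwise identity \(J_\pi\,\omega^{m+r}=\frac{(m+r)!}{r!\,m!}\,\omega^r\wedge\pi^*\eta^m\) on \(X^\circ\) in adapted unitary frames, using that the horizontal eigenvalues of \(\pi^*\eta\) are \(\lambda_\alpha^2\), then integrate along the fibres and discard measure-zero sets; your explicit computation \(\pi^*\epsilon^\alpha=\lambda_\alpha\theta^{r+\alpha}\) and your integrability bookkeeping simply spell out details the paper leaves implicit.
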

\begin{proof}
On \(X^\circ\), choose adapted unitary frames as above.  At the point under consideration one has
\[
J_\pi\,\omega^{m+r}
=
\frac{(m+r)!}{m!\,r!}\,
\omega_y^r\wedge\pi^*\eta^m.
\]
Indeed, the horizontal eigenvalues of \(\pi^*\eta\) with respect to \(\omega\) are \(\lambda_1^2,\ldots,\lambda_m^2\), whose product is \(J_\pi\).  Integrating this identity first along the fibres and then over \(Y^\circ\) proves \eqref{eq:coarea-final}, since the critical locus of \(\pi\) and \(Y\setminus Y^\circ\) have measure zero.
\end{proof}
\subsection{The weighted inequality on the resolved base}
\label{subsec:weighted-level-set}

Continue with the preceding notation, assume that \(Y^\circ\) is connected and that \(K_Y\) is pseudo-effective, and fix a K\"ahler metric \(\eta\) on \(Y\).  For \(z\in Y^\circ\), write \(F_z:=\pi^{-1}(z)\) and \(\omega_z:=\omega|_{F_z}\).  Let \(h_{K,\eta}\) be the Hermitian metric on \(K_Y\) induced by \(\eta\), so that
\[
        \sqrt{-1}\Theta(K_Y,h_{K,\eta})
        =-\Ric(\eta).
\]
Since \(K_Y\) is pseudo-effective, there is a quasi-psh function \(\psi\not\equiv-\infty\) normalized by \(\sup_Y\psi=0\) such that the singular Hermitian metric \(h_K=e^{-\psi}h_{K,\eta}\) has semipositive curvature
\begin{equation}\label{eq:psef-current-KY-final}
        T:=\sqrt{-1}\Theta(K_Y,h_K)
        =
        -\Ric(\eta)
        +\sqrt{-1}\partial\bar\partial\psi
        \ge0
\end{equation}
as a closed positive \((1,1)\)-current.

\begin{proposition}
\label{prop:psef-fibrewise-scalar-final}
With \(\eta\) and \(\psi\) fixed as above, one has
\begin{equation}\label{eq:psef-fibrewise-scalar-final}
        \int_{Y^\circ}e^\psi
        \left(
        \int_{F_z}S(\omega)\,\omega_z^r
        \right)
        \eta^m
        \le
        \int_{Y^\circ}e^\psi
        \left(
        \int_{F_z}S(\omega_z)\,\omega_z^r
        \right)
        \eta^m.
\end{equation}
\end{proposition}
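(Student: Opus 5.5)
The plan is to split the integrand on the left of \eqref{eq:psef-fibrewise-scalar-final} with \eqref{eq:trace-splitting-final}. By \eqref{eq:vertical-trace-fibre-scalar-final}, the vertical part $\int_{F_z}\tr_V\Ric(\omega)\,\omega_z^r$ is exactly the fibrewise term on the right-hand side. So it suffices to prove
\[
        \int_{Y^\circ}e^\psi\Bigl(\int_{F_z}\tr_N\Ric(\omega)\,\omega_z^r\Bigr)\eta^m\le 0 .
\]
By the coarea formula \eqref{eq:coarea-final}, this is, up to the positive constant $\frac{(m+r)!}{r!\,m!}$, the integral $\int_{X^\circ}e^{\pi^*\psi}J_\pi\,\tr_N\Ric(\omega)\,\omega^{m+r}$. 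The idea is to rewrite $J_\pi\tr_N\Ric(\omega)$ using the Bochner formula of Lemma~\ref{lem:jac-bochner}, and then absorb the base Ricci term using the positivity of $T$ in \eqref{eq:psef-current-KY-final}.

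\textbf{Step 1 (smooth weight).} First I would carry this out for a smooth function $\varphi$ on $Y$, writing $\varphi$ also for $\pi^*\varphi$ and $T_\varphi:=-\Ric(\eta)+\ddbar\varphi$. Since pullback commutes with $\ddbar$, we have $\tr_\omega\pi^*\ddbar\varphi=\Delta_\omega\varphi$. Expanding $\Delta_\omega(e^\varphi J_\pi)$ and inserting \eqref{eq:jac-bochner} gives, on $X^\circ$,
\[
 e^\varphi J_\pi\tr_N\Ric(\omega)
 =\Delta_\omega(e^\varphi J_\pi)
 -e^\varphi\Bigl(|\nabla s_\pi|^2+2\,\mathrm{Re}\langle\partial J_\pi,\partial\varphi\rangle+J_\pi|\partial\varphi|^2\Bigr)
 -e^\varphi J_\pi\tr_\omega\pi^*T_\varphi .
\]
Since $\partial J_\pi=\langle\nabla s_\pi,s_\pi\rangle$, we get $|\partial J_\pi|\le|s_\pi||\nabla s_\pi|$. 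Cauchy--Schwarz then shows the bracket is $\ge0$; equivalently, it is $|\nabla s_\pi+s_\pi\,\partial\varphi|^2$ in suitable conventions. I will fix the constants in the Laplacian convention so that this completed square is exact.

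The function $e^\varphi J_\pi$ is smooth on all of the compact manifold $X$, because $s_\pi$ is a global holomorphic section. Hence $\int_X\Delta_\omega(e^\varphi J_\pi)\,\omega^{m+r}=0$. As $X\setminus X^\circ$ has measure zero, this yields
\[
 \int_{X^\circ}e^\varphi J_\pi\tr_N\Ric(\omega)\,\omega^{m+r}\le-\int_X e^\varphi J_\pi\tr_\omega\pi^*T_\varphi\,\omega^{m+r}.
\]

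\textbf{Step 2 (regularization).} The main obstacle is that $\psi$ is only quasi-psh, so $T$ is a current and the computation in Step 1 is not directly legitimate. I would use Demailly's regularization on the compact K\"ahler manifold $Y$. It gives smooth $\psi_j\searrow\psi$ with $\sup_Y\psi_j\le C$ and $T_{\psi_j}\ge-\eps_j\eta$, where $\eps_j\to0$. Step 1 then gives
\[
 \int_{X^\circ}e^{\psi_j}J_\pi\tr_N\Ric(\omega)\,\omega^{m+r}
 \le\eps_j\int_Xe^{\psi_j}J_\pi\tr_\omega\pi^*\eta\,\omega^{m+r}\le C\eps_j\longrightarrow0,
\]
where the constant is uniform because $e^{\psi_j}$ is uniformly bounded and $J_\pi\tr_\omega\pi^*\eta$ is smooth.

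\textbf{Step 3 (limit).} Next I would observe that $J_\pi\tr_N\Ric(\omega)$ is bounded on $X^\circ$. Indeed, $J_\pi\tr_N\Ric(\omega)$ equals $\Delta_\omega J_\pi-|\nabla s_\pi|^2+J_\pi\tr_\omega\pi^*\Ric(\eta)$, and every term there is smooth on $X$. Since $e^{\psi_j}\to e^\psi$ pointwise and boundedly, dominated convergence lets me pass to the limit and get $\int_{X^\circ}e^{\psi}J_\pi\tr_N\Ric(\omega)\,\omega^{m+r}\le0$. Converting back through \eqref{eq:coarea-final} and adding the vertical part via \eqref{eq:vertical-trace-fibre-scalar-final} proves \eqref{eq:psef-fibrewise-scalar-final}.
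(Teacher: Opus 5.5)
Your Step 1 is the paper's twisted Bochner identity \eqref{eq:weighted-bochner-scalar-final}: the completed square is $|\nabla^{\varphi}s_\pi|^2$ for the Chern connection of the metric twisted by $e^{\varphi}$. Step 3 is also fine. In particular, your remark that $J_\pi\tr_N\Ric(\omega)=\Delta_\omega J_\pi-|\nabla s_\pi|^2+J_\pi\tr_\omega\pi^*\Ric(\eta)$ is bounded on $X$ cleanly justifies dominated convergence.

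\textbf{The gap is in Step 2.} Demailly's theorem does not produce \emph{smooth} $\psi_j$ with $-\Ric(\eta)+\ddbar\psi_j\ge-\eps_j\eta$ and $\eps_j\to0$. Smooth representatives of $2\pi c_1(K_Y)$ with uniformly vanishing negative part exist exactly when $K_Y$ is nef, but you only know that $K_Y$ is pseudo-effective. Concretely, suppose $Y$ contains a curve $C$ with $K_Y\cdot C<0$, e.g.\ a line in the exceptional divisor of a point blow-up, as can happen for the resolved base $\widehat Z$. Then $2\pi K_Y\cdot C=\int_C T_{\psi_j}\ge-\eps_j\int_C\eta\to0$, a contradiction. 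For smooth approximants, Demailly only gives $T_{\psi_j}\ge-(C\lambda_j+\eps_j)\eta$, where the continuous $\lambda_j$ decrease to the Lelong numbers $\nu(T,\cdot)$. This loss is not uniformly small where $T$ has positive Lelong numbers. A uniformly small loss is available only if you allow $\psi_j$ to have analytic singularities.

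This is exactly where the paper's proof does its real work. It takes $\psi_j$ with analytic singularities along $A_j$ and $T_j\ge-\delta_j\eta$, $\delta_j\to0$. Then $e^{\psi_j\circ\pi}J_\pi$ is no longer smooth across $\pi^{-1}(A_j)$, so your identity $\int_X\Delta_\omega(e^{\psi_j}J_\pi)\,\omega^{m+r}=0$ is no longer automatic. The paper handles this as follows:
\begin{itemize}
\item it multiplies by log--log cutoffs $\widehat\chi_{j,R}=\chi(R^{-1}\log(-\rho_j\circ\pi))$;
\item after principalizing the singularities, it shows $\int_X u_j|d\widehat\chi_{j,R}|^2\omega^{m+r}\to0$;
\item it absorbs the cross term by Cauchy--Schwarz into half of $\int\widehat\chi_{j,R}^2|\nabla^j s_\pi|^2$.
\end{itemize}

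Alternatively, you can keep your smooth scheme by using the sharper form of Demailly's estimate. Your error term is then bounded by $\int_X e^{\psi_j\circ\pi}J_\pi\,(C\lambda_j\circ\pi+\eps_j)\,\tr_\omega\pi^*\eta\,\omega^{m+r}$. This tends to zero by dominated convergence, for three reasons:
\begin{itemize}
\item $\lambda_j\le\lambda_1$ is bounded;
\item $\nu(T,y)>0$ forces $\psi(y)=-\infty$, which happens only on a Lebesgue-null set;
\item the preimage of that null set in $X^\circ$ is null because $\pi$ is a submersion there, and $X\setminus X^\circ$ is null.
\end{itemize}
One of these two repairs is needed; as written, Step 2 invokes a regularization that does not exist under the stated hypotheses.
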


\begin{proof}
By Demailly's regularization theorem \cite[Proposition~3.1]{Demailly92}, there are quasi-psh functions
\(\psi_j\le0\) normalized by \(\sup_Y \psi_j=0\), with analytic singularities, and numbers \(\delta_j\downarrow0\) such that
\[
\psi_j\longrightarrow\psi
\quad\text{in }L^1(Y)\text{ and almost everywhere on }Y.
\]
Let \(A_j\) be the polar set of \(\psi_j\) which is a proper analytic subset. Then \(\psi_j\) is smooth on \(Y\setminus A_j\), and
\begin{equation}\label{eq:regularized-psef-current-final}
        T_j
        :=
        -\Ric(\eta)
        +\sqrt{-1}\partial\bar\partial\psi_j
        \ge-\delta_j\eta.
\end{equation}
Define
\[
        \widetilde T_j:=T_j+\delta_j\eta\ge0,
        \qquad
        u_j:=e^{\psi_j\circ\pi}J_\pi.
\]
The function \(u_j\) is the squared norm of \(s_\pi=\Lambda^m\partial\pi\) with respect to the metric on
\(\Lambda^m(T^{1,0}X)^* \otimes\pi^*K_Y^{-1}\) induced by \(\omega\) and \(e^{\psi_j}h_{K,\eta}^{-1}\). It follows from \eqref{eq:trace-splitting-final}, \eqref{eq:jac-bochner} and \eqref{eq:regularized-psef-current-final} that, on \(X^\circ\setminus\pi^{-1}(A_j)\),
\begin{equation}\label{eq:weighted-bochner-scalar-final}
\begin{aligned}
        \Delta_\omega u_j
        =
        |\nabla^j s_\pi|^2
        +
        u_j\left(
        S(\omega)-\tr_V\Ric(\omega)
        +\tr_\omega\pi^*\widetilde T_j
        \right) 
        -\delta_j u_j\tr_\omega\pi^*\eta,
\end{aligned}
\end{equation}
where \(\nabla^j\) is the associated Chern connection. We next remove the analytic singularities. Locally,
\[
        \psi_j
        =
        c_j\log\!\left(\sum_\nu|f_{j,\nu}|^2\right)+v_j,
        \qquad c_j>0,
\]
where \(v_j\) is smooth. Thus \(e^{\psi_j/2}\), extended by zero across \(A_j\), belongs locally to \(W^{1,2}\). Fix \(C_0\ge e\) and let \(\rho_j:=\psi_j-C_0\). Then \(\rho_j\le-e\), and \(\rho_j\) and \(\psi_j\) have the same analytic singularities. Let \(\chi\in C^\infty(\mathbb R,[0,1])\) satisfy
\[
        \chi=1\quad\text{on }(-\infty,1],
        \qquad
        \chi=0\quad\text{on }[2,\infty),
        \qquad
        \chi'\le0.
\]
Define
\[
        \widehat\chi_{j,R}
        :=
        \chi\!\left(
        R^{-1}\log\bigl(-\rho_j\circ\pi\bigr)
        \right),
\]
extended by zero across \(\pi^{-1}(A_j)\). For each fixed \(R\), this extension is smooth because the cutoff function vanishes identically in a neighbourhood of \(\pi^{-1}(A_j)\). Moreover, \(\widehat\chi_{j,R}\to1\) almost everywhere as \(R\to\infty\), and
\[
        |d\widehat\chi_{j,R}|
        \le
        \frac{C}{R}
        \frac{|d(\rho_j\circ\pi)|}{-\rho_j\circ\pi}
        \mathbf 1_{\{
        e^R\le-\rho_j\circ\pi\le e^{2R}\}}.
\]

Apply \cite[Theorem~1.10]{BierstoneMilman97} to the analytic singularities of \(\rho_j\circ\pi\). This gives a finite composition of blow-ups along smooth centres \(\tau_j:\widetilde X_j\longrightarrow X\), such that the pulled-back singularities have simple normal crossings. Thus, locally on \(\widetilde X_j\),
\[
        \widetilde\rho_j
        :=
        \rho_j\circ\pi\circ\tau_j
        =
        \sum_{\ell=1}^k
        a_{j,\ell}\log|z_\ell|^2+b_j,
        \qquad a_{j,\ell}>0,
\]
where \(b_j\) is smooth. Denote \(\widetilde\chi_{j,R}:=\widehat\chi_{j,R}\circ\tau_j\). Fix a K\"ahler metric \(\widetilde\omega_j\) on \(\widetilde X_j\). Since \(\widetilde X_j\) is compact, there exists a constant \(C_j>0\), so that \(\tau_j^*\omega\le C_j\widetilde\omega_j\). Using the change-of-variables formula for the degree-one modification \(\tau_j\), we obtain
\[
\begin{aligned}
 \int_X
 |d\widehat\chi_{j,R}|_\omega^2\,\omega^{m+r}                                                  
 &=
 2(m+r)\int_{\widetilde X_j}
 \sqrt{-1}\partial\widetilde\chi_{j,R}
 \wedge\bar\partial\widetilde\chi_{j,R}
 \wedge(\tau_j^*\omega)^{m+r-1}                          \\
 &\le
 C_j\int_{\widetilde X_j}
 |d\widetilde\chi_{j,R}|_{\widetilde\omega_j}^2
 \,\widetilde\omega_j^{m+r}          \\
 &\le
 \frac{C_j}{R^2}
 \int_{\{e^R\le-\widetilde\rho_j\le e^{2R}\}}
 \frac{|d\widetilde\rho_j|_{\widetilde\omega_j}^2}
      {(-\widetilde\rho_j)^2}
 \,\widetilde\omega_j^{m+r}\\
 &\le
 \frac{C_j}{R^2e^R} \longrightarrow0
        \qquad\text{as }R\to\infty.
\end{aligned}
\]
Since \(0\le u_j\le J_\pi\) and \(J_\pi\) is smooth on \(X\),
\begin{equation}\label{eq:weighted-log-log-cutoff-final}
        \int_Xu_j
        |d\widehat\chi_{j,R}|_\omega^2\,\omega^{m+r}
        \longrightarrow0.
\end{equation}
For \(\widehat\chi_{j,R}^2\Delta_\omega u_j\), integrating by parts and using Kato's inequality give
\begin{align*}
 &\left|
        \int_X
        \widehat\chi_{j,R}^2
        \Delta_\omega u_j\,\omega^{m+r}\right|  \le
        \frac12
        \int_X
        \widehat\chi_{j,R}^2
        |\nabla^j s_\pi|^2\,\omega^{m+r}
        +
        C\int_X
        u_j|d\widehat\chi_{j,R}|_\omega^2\,\omega^{m+r}.
\end{align*}
Combining with \eqref{eq:weighted-bochner-scalar-final} and letting \(R\to\infty\) gives
\begin{equation}\label{eq:pre-coarea-weighted-final}
        \int_Xu_j
        \bigl(S(\omega)-\tr_V\Ric(\omega)\bigr)
        \,\omega^{m+r}
        \le C\delta_j,
\end{equation}
where \(C\) is independent of \(j\). Apply the coarea formula to \eqref{eq:pre-coarea-weighted-final}. After absorbing the constant into \(C\), we obtain
\[
        \int_{Y^\circ}e^{\psi_j}
        \left[
        \int_{F_z}S(\omega)\,\omega_z^r
        -
        \int_{F_z}\tr_V\Ric(\omega)\,\omega_z^r
        \right]\eta^m
        \le C\delta_j.
\]
By \eqref{eq:vertical-trace-fibre-scalar-final}, this becomes
\begin{equation}\label{eq:regularized-fibrewise-scalar-final}
        \int_{Y^\circ}e^{\psi_j}
        \left[
        \int_{F_z}S(\omega)\,\omega_z^r
        -
        \int_{F_z}S(\omega_z)\,\omega_z^r
        \right]\eta^m
        \le C\delta_j.
\end{equation}
Let \(\delta_j\to0\).  The expression in square brackets is uniformly bounded on \(Y^\circ\): this follows from the constancy of the fibre volume, the boundedness of \(S(\omega)\), and
\eqref{eq:vertical-trace-fibre-scalar-final}.  Since \(0\le e^{\psi_j}\le1\), dominated convergence gives
\eqref{eq:psef-fibrewise-scalar-final}.
\end{proof}


\section{Systolic inequalities on PSC K\"ahler manifolds}\label{sec:systolic-inequalities-psc}

In this section, we prove the systolic inequalities by the level-set method.

\subsection{The MRC fibration and its graph resolution}

Let \((X^n,\omega)\) be a compact K\"ahler manifold with \(S(\omega)>0\). By the Chern--Weil formula,
\[
        0<\int_X S(\omega)\,\omega^n
        =2\pi n\,c_1(X)\cdot[\omega]^{n-1}.
\]
Thus \(c_1(K_X)\cdot[\omega]^{n-1}<0\).  If \(K_X\) were pseudo-effective, a closed positive current in \(c_1(K_X)\) would have nonnegative pairing with \([\omega]^{n-1}\), a contradiction.  Hence \(K_X\) is not pseudo-effective, and Ou's characterization \cite[Theorem~1.1]{Ou2025} shows that \(X\) is uniruled.

We first recall the maximal rationally connected (MRC) fibration. By Campana's rational quotient construction for compact K\"ahler manifolds \cite{Campana81} (see also \cite[Theorem~2.7]{HoeringPeternell11}, \cite{KMM92} and \cite[Chapter~IV]{Kollar96} in the projective setting), there is an almost holomorphic map
\begin{equation*}
        \varphi:X\dashrightarrow Z
\end{equation*}
with connected fibres, unique up to bimeromorphic equivalence. Here almost holomorphic means that there are nonempty Zariski open sets \(X^0\subset X\) and \(Z^0\subset Z\) such that \(\varphi^0:X^0\longrightarrow Z^0\) is a proper holomorphic map. The general fibre is smooth and rationally connected. Moreover, every rational curve through a very general point of
\(X\) is contained in the corresponding fibre.  This maximality characterizes \(\varphi\).

The base \(Z\) is a normal compact K\"ahler space and is not uniruled. Choose a resolution \(\nu:\widehat Z\longrightarrow Z\) and resolve the main component of the graph of \(\varphi\):
\begin{equation}\label{eq:mrc-graph-final}
        \begin{array}{ccc}
        \widehat X & \xrightarrow{\widehat\pi} & \widehat Z \\
        \mu\downarrow & & \downarrow\nu \\
        X & \dashrightarrow & Z .
        \end{array}
\end{equation}
The manifolds \(\widehat X\) and \(\widehat Z\) may be chosen compact K\"ahler, \(\widehat\pi\) is holomorphic, and \(\mu\) is a composition of blow-ups along smooth centres by \cite[Theorem~1.10]{BierstoneMilman97} for principalization and \cite{Varouchas89} for the K\"ahler property.  After shrinking a nonempty Zariski open set \(\widehat Z^\circ\subset\widehat Z\), the map
\[
        \widehat\pi:\widehat X^\circ
        :=\widehat\pi^{-1}(\widehat Z^\circ)
        \longrightarrow\widehat Z^\circ
\]
is a proper holomorphic submersion and \(\mu\) is biholomorphic on a neighbourhood of \(\widehat X^\circ\).  Thus the regular fibres \(\widehat F_z\) are identified with the general MRC fibres in \(X\), and the exceptional classes of \(\mu\) restrict trivially to them.

Since uniruledness is birationally invariant, \(\widehat Z\) is not uniruled.  By \cite[Theorem~1.1]{Ou2025}, \(K_{\widehat Z}\) is therefore pseudo-effective. Consequently the graph resolution
\eqref{eq:mrc-graph-final} satisfies the hypotheses of Section~\ref{sec:level-set-method}.

\begin{definition}[Rational dimension]\label{def:rational-dimension}
The rational dimension of \(X\), denoted by \(r(X)\), is the complex dimension of a general fibre of its MRC fibration. Equivalently,
\[
        r(X)=\dim X-\dim Z.
\]
\end{definition}
Since \(X\) is uniruled, \(r(X)\ge1\).  Moreover, \(r(X)=n\) if and only if \(X\) is rationally connected.  Since the MRC fibration is unique up to bimeromorphic equivalence, \(r(X)\) is a birational invariant.

\subsection{A sharp even-dimensional systolic inequality}

We now establish a sharp upper bound for every even-dimensional homological systole for fibrations whose general fibre is the projective space.

\begin{proof}[Proof of Theorem \ref{thm:even-sys}]
Choose a smooth compact K\"ahler model \(\nu:\widehat Z\to Z\) and a graph resolution
\[
        \begin{array}{ccc}
        \widehat X & \xrightarrow{\widehat\pi} & \widehat Z \\
        \mu\downarrow & & \downarrow\nu \\
        X & \dashrightarrow & Z .
        \end{array}
\]
By \cite[Theorem~1.10]{BierstoneMilman97}, we may take \(\mu\) to be a finite composition of blow-ups along smooth centres. Theorem~\ref{thm:iterated-approximation} gives K\"ahler metrics \(\omega_\ell\) on \(\widehat X\) such that
\begin{equation}\label{eq:even-sc-approx-final}
        \bigl\|S(\omega_\ell)-\mu^*S(\omega)
        \bigr\|_{C^0(\widehat X)}\longrightarrow0.
\end{equation}

Since \(\widehat Z\) is not uniruled, \cite[Theorem~1.1]{Ou2025} implies that \(K_{\widehat Z}\) is
pseudo-effective.  Let \(e^\psi\) be the weight used in \eqref{eq:psef-fibrewise-scalar-final}.  Over a nonempty Zariski open set \(\widehat Z^\circ\), the map \(\widehat\pi\) is a submersion, \(\mu\) is an
isomorphism along the fibres, and \(\widehat F_z\cong\PP^r\).  Let \(H=c_1(\mathcal O_{\PP^r}(1))\).  Then
\([\omega_\ell|_{\widehat F_z}]=[\mu^*\omega|_{\widehat F_z}]=aH\). Therefore
\[
        \int_{\widehat F_z}\omega_{\ell,z}^r=a^r
\]
and, by the fibrewise Chern--Weil formula,
\begin{equation}\label{eq:fibre-scalar-pr-final}
        \int_{\widehat F_z}S(\omega_{\ell,z})
        \,\omega_{\ell,z}^r
        =2\pi r(r+1)a^{r-1}.
\end{equation}

Applying \eqref{eq:psef-fibrewise-scalar-final} on the graph resolution gives
\[
\begin{aligned}
        \min_{\widehat X}S(\omega_\ell)a^r
        \int_{\widehat Z^\circ}e^\psi\eta^m
        \le
        2\pi r(r+1)a^{r-1}
        \int_{\widehat Z^\circ}e^\psi\eta^m,
\end{aligned}
\]
which is exactly
\[
        \min_{\widehat X}S(\omega_\ell)a
        \le2\pi r(r+1).
\]
Letting \(\ell\to\infty\) in view of \eqref{eq:even-sc-approx-final}, we obtain
\begin{equation}\label{eq:minS-a}
        \min_XS(\omega)a\le2\pi r(r+1).
\end{equation}

Now fix \(1\le q\le r\).  A linear \(\PP^q\subset\widehat F_z\) maps biholomorphically to a complex
\(q\)-fold in \(X\).  Its homology class is nonzero, since
\[
        \int_{\PP^q}\omega^q=a^q>0.
\]
Hence \(\sys_{2q}(\omega)\le a^q\).  Combining this with
\eqref{eq:minS-a} proves \eqref{eq:sharp-even-systolic}.

Suppose now that equality holds in \eqref{eq:sharp-even-systolic}. We then have
\[
        \sys_{2q}(\omega)^{1/q}=a,
        \qquad
        \min_XS(\omega)a=2\pi r(r+1).
\]
Proposition~\ref{prop:sharp-fibre-rigidity} therefore gives the asserted splitting, and a fibrewise linear \(\PP^q\) realizes the systole. The converse is obvious.
\end{proof}

\subsection{The \texorpdfstring{\(2\)}{2}-systole and rational dimension}\label{subsec:rational-dimensional-2-systole}

In this subsection, we prove the global \(2\)-systole estimate.  In particular, the sharp constant is controlled by the rational dimension of the manifold rather than by its complex dimension. We first record the algebraic estimate used on the rationally connected fibres.

\begin{lemma}\label{lem:mori-length-final}
Let \(F^r\) be a smooth rationally connected manifold, and let \(\alpha\in\mathcal K(F)\) be a K\"ahler class.  Define
\[
        m_F(\alpha)
        :=
        \inf\{\alpha\cdot C\mid C\subset F\text{ a rational curve}\}.
\]
Then
\begin{equation}\label{eq:mori-length-final}
        m_F(\alpha)
        \frac{c_1(F)\cdot\alpha^{r-1}}{\alpha^r}
        \le
        r+1.
\end{equation}
Moreover, equality in \eqref{eq:mori-length-final} holds if and only if \(F\cong\mathbb P^r\).
\end{lemma}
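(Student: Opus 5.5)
The plan is to use Mori's bend-and-break, in the Kähler form due to Miyaoka--Mori and its Kähler extensions. A rationally connected compact Kähler manifold is projective because $h^{2,0}=0$, so all algebraic tools apply to $F$.

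\emph{Step 1: the inequality.} Fix a very general point $x\in F$. Since $F$ is rationally connected and $\alpha$ is Kähler, I would choose a general complete intersection curve $C_0$ through $x$ that is movable. For nonintegral $\alpha$, approximate $\alpha^{r-1}$ by $\frac1{N^{r-1}}H_N^{r-1}$ with ample $\mathbb Q$-classes $H_N\to\alpha$, and use that both sides of \eqref{eq:mori-length-final} are continuous in $\alpha$. Since $c_1(F)\cdot H^{r-1}>0$ for $F$ uniruled, Miyaoka--Mori gives a rational curve $C$ through $x$ with
\[
H\cdot C\le \frac{2r\,H^{r-1}\cdot H}{c_1(F)\cdot H^{r-1}}.
\]
This yields only $2r$, not $r+1$. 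So I would instead use the sharper form based on minimal rational curves. Let $\mathcal K$ be a family of rational curves through a very general point that is minimal with respect to $\alpha$. For such a family the normal bundle is $\mathcal O(1)^p\oplus\mathcal O^{r-1-p}$, so $c_1(F)\cdot C=p+2\le r+1$.

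The remaining task is to compare $\alpha\cdot C$ with the average $c_1(F)\cdot\alpha^{r-1}/\alpha^r$. For this I would use the Boucksom--Demailly--Păun--Peternell duality: the cone of movable curves is dual to the pseudo-effective cone. Then $\alpha^{r-1}$ lies in the closure of the movable cone, and by Araujo's decomposition the $K_F$-negative part of that cone is generated by classes of minimal-degree covering families $[C_i]$ plus the $K$-nonnegative part. Writing $\alpha^{r-1}=\sum t_i[C_i]+\gamma$ with $K_F\cdot\gamma\ge0$, each $C_i$ has $c_1\cdot C_i\le r+1$ and $\alpha\cdot C_i\ge m_F(\alpha)$. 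Therefore
\[
c_1(F)\cdot\alpha^{r-1}\le (r+1)\sum t_i\le \frac{r+1}{m_F(\alpha)}\sum t_i\,\alpha\cdot C_i\le\frac{r+1}{m_F(\alpha)}\,\alpha^r,
\]
where the last step uses $\alpha\cdot\gamma\ge0$. This proves \eqref{eq:mori-length-final}.

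\emph{Step 2: equality.} Equality forces $\gamma$ to contribute nothing, which makes $c_1(F)\cdot\alpha^{r-1}$ equal to $(r+1)\sum t_i$. It also forces $c_1(F)\cdot C_i=r+1$ for some covering family of minimal rational curves. By Cho--Miyaoka--Shepherd-Barron (and Kebekus' simplification), a covering family through a general point with anticanonical degree $r+1$ implies $F\cong\PP^r$. Conversely, on $\PP^r$ we have $\alpha=aH$, $m_F=a$ realized by lines, and $c_1\cdot\alpha^{r-1}/\alpha^r=(r+1)/a$, so equality holds.

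The main obstacle is Step 1. I need the decomposition of $\alpha^{r-1}$ into minimal covering families without losing a factor of $2$, and this must work for arbitrary Kähler (not rational) classes. I would first try to reduce to ample $\mathbb Q$-classes by density and continuity of $m_F$: $m_F$ is lower semicontinuous because rational curves of bounded degree form finitely many families. If the Araujo-type decomposition proves too delicate, the fallback is to apply Cho--Miyaoka--Shepherd-Barron directly to a minimal covering family whose degree realizes the Seshadri-type bound.
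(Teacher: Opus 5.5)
\paragraph{Gap in the inequality.} Step~1 is where your argument breaks. Your chain needs a decomposition $\alpha^{r-1}=\gamma+\sum t_i[C_i]$ with $K_F\cdot\gamma\ge0$, $\alpha\cdot\gamma\ge0$, and rational curves $C_i$ satisfying $c_1(F)\cdot C_i\le r+1$. You do not actually have it:
\begin{itemize}
\item The normal-bundle computation bounds $c_1(F)\cdot C$ only for a single minimal family through a general point. It says nothing about the classes needed to write $\alpha^{r-1}$.
\item Movable-cone decompositions of Batyrev--Araujo type take their generators from fibres of Mori fibre spaces on singular birational models. There the available length bound is $2\dim F$, which is exactly the factor of $2$ you are worried about.
\end{itemize}
So, as written, the inequality is not proved, as you concede yourself.

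\paragraph{The fix.} Movability plays no role in your chain. It uses only $\alpha\cdot C_i\ge m_F(\alpha)$ (true for \emph{every} rational curve), $c_1(F)\cdot C_i\le r+1$, $c_1(F)\cdot\gamma\le0$ and $\alpha\cdot\gamma\ge0$. Since $\alpha^{r-1}$ pairs nonnegatively with every nef class, it lies in $\overline{NE}(F)$. Mori's cone theorem gives $\overline{NE}(F)=\overline{NE}(F)_{K_F\ge0}+\sum_i\mathbb{R}_{\ge0}[C_i]$ with $C_i$ rational and $0<-K_F\cdot C_i\le r+1$. This is the decomposition you want, already with the sharp bound.

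This is precisely the paper's argument in dual form. The paper tests $D_\alpha=\frac{r+1}{m_F(\alpha)}\alpha-c_1(F)$ against this decomposition to show $D_\alpha$ is nef, then uses $D_\alpha\cdot\alpha^{r-1}\ge0$. You need no BDPP duality, no approximation of $\alpha$ by $\mathbb{Q}$-classes, and no semicontinuity of $m_F$. Since $F$ is projective with $h^{2,0}=0$, $\alpha$ is just a real ample class and everything is linear.

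\paragraph{Equality case.} Your Step~2 inherits the same unproved decomposition. It also does not transfer to the $\overline{NE}$ decomposition: extremal curves need not cover $F$, so the Kebekus form of Cho--Miyaoka--Shepherd-Barron cannot be invoked directly. That form needs a dominating family that is unsplit at a general point; degree $r+1$ plus covering is not enough. For example, on $\Bl_p\PP^2$ the curves of class $H$ cover, have $-K\cdot H=3$, yet the ones through a general point degenerate to $\widetilde L+E$. In your own version you would at least have to record that equality also forces $\alpha\cdot C_i=m_F(\alpha)$, since that is what makes the family unsplit.

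The paper's route avoids all of this. Equality means $D_\alpha\cdot\alpha^{r-1}=0$ with $D_\alpha$ nef and $\alpha$ K\"ahler, so $D_\alpha\equiv0$. Hence $c_1(F)=\frac{r+1}{m_F(\alpha)}\alpha$ is positive, $F$ is Fano, and every rational curve has $-K_F\cdot C\ge r+1$. Cho--Miyaoka--Shepherd-Barron for pseudoindex $\ge r+1$ then gives $F\cong\PP^r$.
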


\begin{proof}
A rationally connected compact K\"ahler manifold is projective.  Choose an ample integral class \(A\). Since \(\alpha-\tau A\) is K\"ahler for some \(\tau>0\), every curve satisfies
\(\alpha\cdot C\ge\tau A\cdot C\ge\tau\).  Thus \(m_F(\alpha)>0\).

Define
\[
        D_\alpha
        :=
        \frac{r+1}{m_F(\alpha)}\alpha-c_1(F).
\]
We claim that \(D_\alpha\) is nef.  By Mori's cone theorem \cite[Theorem~1.4]{Mori82}, every \(K_F\)-negative extremal ray is generated by a rational curve \(C\) satisfying
\[
        0<-K_F\cdot C\le r+1.
\]
For such a curve,
\[
        D_\alpha\cdot C
        =
        \frac{r+1}{m_F(\alpha)}\alpha\cdot C-c_1(F)\cdot C
        \ge
        r+1-(-K_F\cdot C)
        \ge0.
\]
On the part of the Mori cone where \(K_F\ge0\), the inequality \(D_\alpha\cdot\gamma\ge0\) is immediate because \(\alpha\) is K\"ahler and \(-c_1(F)\cdot\gamma\ge0\).  Hence \(D_\alpha\) is nef.  Therefore
\[
        0
        \le
        D_\alpha\cdot\alpha^{r-1}
        =
        \frac{r+1}{m_F(\alpha)}\alpha^r
        -
        c_1(F)\cdot\alpha^{r-1}.
\]
This proves \eqref{eq:mori-length-final}.

If equality holds, then \(D_\alpha\cdot\alpha^{r-1}=0\).  Since \(D_\alpha\) is nef and \(\alpha\) is K\"ahler, this implies \(D_\alpha\equiv0\). Thus
\[
        c_1(F)=\frac{r+1}{m_F(\alpha)}\alpha>0,
\]
which implies \(F\) is Fano. Moreover every rational curve \(C\subset F\) satisfies
\[
        -K_F\cdot C
        =
        \frac{r+1}{m_F(\alpha)}\alpha\cdot C
        \ge
        r+1.
\]
Hence the pseudoindex of \(F\) is at least \(r+1\).  By the Cho--Miyaoka--Shepherd-Barron characterization
\cite[Theorem~0.1]{CMSB02}, we have \(F\cong\mathbb P^r\). The converse is obvious.
\end{proof}

We now estimate the \(2\)-systole.

\begin{proof}[Proof of Theorem~\ref{thm:2sys}]
Let \(\varphi:X\dashrightarrow Z\) be the MRC fibration of \(X\). Since \(X\) has positive scalar curvature, \(X\) is uniruled, the general MRC fibre is rationally connected, and the base is not uniruled.

First suppose \(r=n\).  Then \(X\) is rationally connected.  Since \(X\) is compact K\"ahler, it is projective.  Applying Lemma~\ref{lem:mori-length-final} to \(F=X\) and \(\alpha=[\omega]\), and using \(\sys_2(\omega)\le m_X([\omega])\) together with
\[
        \min_XS(\omega)
        \le
        \frac{\int_XS(\omega)\,\omega^n}{\int_X\omega^n}
        =
        2\pi n\frac{c_1(X)\cdot[\omega]^{n-1}}{[\omega]^n},
\]
we get
\[
        \min_XS(\omega)\,\sys_2(\omega)
        \le
        2\pi n(n+1),
\]
which is \eqref{eq:rational-dimensional-2sys-final} in this case.

Now assume \(r<n\), and use the graph resolution \eqref{eq:mrc-graph-final}. Theorem~\ref{thm:iterated-approximation} gives K\"ahler metrics \(\omega_\ell\) on \(\widehat X\) such that
\begin{equation}\label{eq:sc-approx-resolution-final}
        \bigl\|S(\omega_\ell)-\mu^*S(\omega)\bigr\|_{C^0(\widehat X)}
        \longrightarrow0.
\end{equation}
In particular, for \(\ell\) sufficiently large, \(\omega_\ell\) still has positive scalar curvature.

We may therefore apply \eqref{eq:psef-fibrewise-scalar-final} to \(\widehat\pi:(\widehat X,\omega_\ell)\to\widehat Z\).  For \(z\in\widehat Z^\circ\), Lemma~\ref{lem:mori-length-final} gives
\[
        \int_{\widehat F_z}S(\omega_{\ell,z})
        \,\omega_{\ell,z}^r
        \le
        \frac{2\pi r(r+1)}{m_{\ell,z}}
        \int_{\widehat F_z}\omega_{\ell,z}^r,
\]
where
\[
        m_{\ell,z}
        :=
        \inf\left\{
        \int_C\omega_\ell
        \;\middle|\;
        C\subset\widehat F_z\text{ a rational curve}
        \right\}.
\]
Since \(\mu\) is an isomorphism along \(\widehat F_z\), every rational curve \(C\subset\widehat F_z\) maps to a rational curve \(\mu(C)\subset X\), and the exceptional divisors of \(\mu\) do not meet \(C\).  The cohomology-class formula in Theorem~\ref{thm:iterated-approximation} therefore gives
\[
        \int_C\omega_\ell
        =
        \int_{\mu(C)}\omega.
\]
The curve \(\mu(C)\) is homologically non-trivial because it is a positive-area holomorphic curve in the K\"ahler manifold \((X,\omega)\). Therefore
\[
        m_{\ell,z}
        \ge
        \sys_2(\omega).
\]
It follows that
\[
        \int_{\widehat F_z}S(\omega_{\ell,z})
        \,\omega_{\ell,z}^r
        \le
        \frac{2\pi r(r+1)}{\sys_2(\omega)}
        \int_{\widehat F_z}\omega_{\ell,z}^r.
\]
Substituting this estimate into \eqref{eq:psef-fibrewise-scalar-final}, with \(\omega\) there replaced by \(\omega_\ell\), gives
\[
        \min_{\widehat X}S(\omega_\ell)
        \int_{\widehat Z^\circ}e^\psi
        \left(
        \int_{\widehat F_z}\omega_{\ell,z}^r
        \right)
        \eta^m
        \le
        \frac{2\pi r(r+1)}{\sys_2(\omega)}
        \int_{\widehat Z^\circ}e^\psi
        \left(
        \int_{\widehat F_z}\omega_{\ell,z}^r
        \right)
        \eta^m.
\]
Consequently,
\[
        \min_{\widehat X}S(\omega_\ell)\,\sys_2(\omega)
        \le
        2\pi r(r+1).
\]
Letting \(\ell\to\infty\) and using
\eqref{eq:sc-approx-resolution-final}, we get
\[
        \min_XS(\omega)\,\sys_2(\omega)
        \le
        2\pi r(r+1).
\]
This proves \eqref{eq:rational-dimensional-2sys-final}.

Now assume the equality holds in \eqref{eq:rational-dimensional-2sys-final}. Let \(\sigma:=\sys_2(\omega)\). If \(r=n\), we have \(S(\omega)\equiv\min_XS(\omega)\) and \(\sigma=m_X([\omega])\) and equality holds in Lemma~\ref{lem:mori-length-final}. Hence \(X\cong\PP^n\). Writing \([\omega]=aH\), we obtain
\[
  a=\sigma,
  \qquad
  \min_XS(\omega)a=2\pi n(n+1).
\]
Applying Proposition~\ref{prop:sharp-fibre-rigidity} to the map \(X\to\{\mathrm{pt}\}\) gives  \((X,\omega)\cong(\PP^n,\sigma\omega_{\mathrm{FS}})\)

Assume next that \(r<n\). Since \(\min_{\widehat X}S(\omega_\ell)\rightarrow\min_XS(\omega)\) by \eqref{eq:sc-approx-resolution-final}, equality in \eqref{eq:rational-dimensional-2sys-final} forces equality throughout the chain of inequalities used above. More precisely, the fibre volume and the fibrewise total scalar curvature depend only on the restricted K\"ahler class, which is independent of \(\ell\). Hence the weighted integral of the nonnegative fibrewise defect
\[
  \frac{2\pi r(r+1)}{\sigma}
  \int_{\widehat F_z}\omega_{\ell,z}^r
  -
  \int_{\widehat F_z}
    S(\omega_{\ell,z})\,\omega_{\ell,z}^r
\]
vanishes. Since \(e^\psi>0\) almost everywhere and this defect is locally constant in \(z\), it vanishes on every general fibre. Consequently, equality holds both in Lemma~\ref{lem:mori-length-final} and in \(m_{\ell,z}\ge\sigma\). Therefore
\[
  \widehat F_z\cong\PP^r,
  \qquad
  m_{\ell,z}=\sigma,
  \qquad
  [\mu^*\omega|_{\widehat F_z}]=\sigma H.
\]
Thus the hypotheses of Proposition~\ref{prop:sharp-fibre-rigidity} are satisfied with \(a=\sigma\), and the proposition gives the asserted splitting.
\end{proof}

\subsection{Equality and rigidity}\label{sec:systolic-rigidity}

We first isolate the equality mechanism common to
Theorems~\ref{thm:even-sys} and~\ref{thm:2sys}.

\begin{proposition}\label{prop:sharp-fibre-rigidity}
Let \(f:X^{m+r}\dashrightarrow Z^m\) be an almost holomorphic fibration from a compact K\"ahler manifold onto a normal compact K\"ahler space which is not uniruled, and assume that a general fibre is biholomorphic to \(\PP^r\). Let \(\omega\) be a K\"ahler metric on \(X\) with positive scalar curvature, and let \(a:=[\omega]\cdot [L]\), where \(L\) is a line in a general fibre.  If
\begin{equation}\label{eq:sharp-relation-rigidity}
        \min_XS(\omega)\,a=2\pi r(r+1),
\end{equation}
then the universal cover splits holomorphically and isometrically as
\begin{equation}\label{eq:common-rigidity-product}
        (\widetilde X,\widetilde\omega)
        \cong
        (\PP^r,a\omega_{\mathrm{FS}})\times(Y,\omega_{\mathrm{RF}}),
\end{equation}
where \(\omega_{FS}\) is the Fubini--Study metric and \(\omega_{RF}\) is Ricci-flat.
\end{proposition}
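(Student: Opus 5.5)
The plan is to run the equality case back through the weighted level-set argument of Proposition~\ref{prop:psef-fibrewise-scalar-final}. Equality should produce a parallel holomorphic splitting of \(T^{1,0}X\) over the regular locus. De Rham's theorem then gives the product, and the two factors are identified using constancy of \(S\) and the equality terms coming from the pseudo-effective weight.

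\textbf{Step 1: pass to the limit in the integral identity.} Take the graph resolution and the metrics \(\omega_\ell\) of Theorem~\ref{thm:iterated-approximation}. Instead of discarding the positive terms in the proof of Proposition~\ref{prop:psef-fibrewise-scalar-final}, I keep them. Integrating \eqref{eq:weighted-bochner-scalar-final} against \(\widehat\chi_{j,R}^2\) and applying the coarea formula yields
\[
\int u_j\bigl(\tfrac12|\nabla^js_{\widehat\pi}|^2+\tr_{\omega_\ell}\widehat\pi^*\widetilde T_j\bigr)\omega_\ell^{m+r}
+\int_{\widehat Z^\circ}e^{\psi_j}\Bigl[\int_{F_z}(S(\omega_\ell)-\min S(\omega_\ell))\,\omega_{\ell,z}^r\Bigr]\eta^m
\le C\delta_j+\mathrm{defect}_\ell ,
\]
where \(\mathrm{defect}_\ell\to0\) by \eqref{eq:sharp-relation-rigidity} and \eqref{eq:fibre-scalar-pr-final}. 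I let \(j\to\infty\) and then \(\ell\to\infty\). By Lemma~\ref{lem:one-step-local-smooth} and \eqref{eq:intro-iterated-scalar}, \(\omega_\ell\to\mu^*\omega\) smoothly on compact subsets of \(\widehat X^\circ\), where \(\mu\) is biholomorphic. By lower semicontinuity (Fatou) on such compact sets, and because \(e^\psi>0\) almost everywhere, I obtain on \(X^\circ:=\mu(\widehat X^\circ)\), with \(\pi=\widehat\pi\circ\mu^{-1}\):
\[
S(\omega)\equiv\min_XS(\omega),\qquad \nabla^{\psi}s_\pi=0,\qquad \tr_\omega\pi^*T=0 .
\]
Here \(\nabla^\psi\) is the Chern connection of the metric \(\omega\otimes e^{\psi}h_{K,\eta}^{-1}\). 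Constancy of \(S\) extends to all of \(X\) by continuity. I expect this limiting step to be the main technical obstacle, because \(\psi\) is only quasi-psh. I would first handle it on a fixed compact \(K\Subset X^\circ\setminus\pi^{-1}(\{\psi=-\infty\})\) and then exhaust. A natural intermediate is to show that \(e^{\psi}\) is smooth on \(\widehat Z^\circ\): since \(\nabla^\psi s_\pi=0\) with \(s_\pi\) holomorphic and nonvanishing on \(X^\circ\), \(\log|s_\pi|^2_\psi\) is constant, so \(\psi\) is smooth there.

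\textbf{Step 2: product structure.} On \(X^\circ\), the form \(s_\pi\) is a nonvanishing parallel decomposable \(m\)-form, up to the line-bundle twist, whose annihilator is \(V\). Hence \(V\) and \(N=V^{\perp}\) are parallel for the Levi-Civita connection of \(\omega\) on \(X^\circ\). Since \(X\setminus X^\circ\) is a proper analytic subset, \(\pi_1(X^\circ)\to\pi_1(X)\) is surjective and loops in \(X\) can be approximated by loops in \(X^\circ\). So \(V_p\) is invariant under the full holonomy of \((X,\omega)\) and extends to a parallel splitting \(T^{1,0}X=V\oplus N\). De Rham's decomposition then gives \((\widetilde X,\widetilde\omega)\cong(F',\omega_{F'})\times(Y,\omega_Y)\) holomorphically and isometrically. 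A general fibre \(\PP^r\) is a compact, simply connected, totally geodesic leaf, so it is a slice \(F'\times\{y\}\). Thus \(F'\cong\PP^r\) and \([\omega_{F'}]=aH\).

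\textbf{Step 3: identify the factors.} Because \(S(\omega)=S(\omega_{F'})+S(\omega_Y)\) is constant and the two summands depend on separate variables, both are constant. A constant scalar curvature metric in \(aH=\tfrac{a}{r+1}c_1(\PP^r)\) is Kähler--Einstein, by the standard fact that cscK in a multiple of \(c_1\) is KE. By uniqueness up to automorphisms, \(\omega_{F'}=a\omega_{\mathrm{FS}}\), so \(S(\omega_{F'})=2\pi r(r+1)/a\). Then \eqref{eq:sharp-relation-rigidity} forces \(S(\omega_Y)=0\). To upgrade scalar-flat to Ricci-flat, I read off the curvature of the parallel section. A parallel nonvanishing section has zero Chern curvature in its bundle, so on \(X^\circ\)
\[
\Ric(\omega)|_N=\pi^*\bigl(\Ric(\eta)-\ddbar\psi\bigr)=-\pi^*T\le0 .
\]
In the product, \(\Ric(\omega)|_N=\Ric(\omega_Y)\), whose trace is \(S(\omega_Y)=0\). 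A semi-negative form with zero trace vanishes, so \(\Ric(\omega_Y)=0\) on a dense open set, hence everywhere by analyticity. This gives \eqref{eq:common-rigidity-product}.
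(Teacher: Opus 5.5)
Your overall architecture matches the paper's, and Steps 2--3 are sound. Approximating loops in $X$ by loops in $X^\circ$ to show that the holonomy preserves $V_p$ is a valid substitute for the paper's Riemann extension of the parallel orthogonal projection $P$. Reading $\Ric^N=-\pi^*T$ off the parallel section is what the paper does at the end.

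The gap is the limiting step in Step~1, which you flag, and your proposed way around it fails.
\begin{itemize}
\item Fatou's lemma needs pointwise a.e.\ convergence of the integrands. But $|\nabla^{\ell,j}s_{\widehat\pi}|^2$ contains $\partial\psi_j$, and $u_{\ell,j}\tr_{\omega_\ell}\widehat\pi^*\widetilde T_j$ contains $\ddbar\psi_j$. Demailly's regularization only gives $\psi_j\to\psi$ in $L^1$ and a.e., so these derivatives converge only weakly, and $\tr_\omega\pi^*T$ is a priori just a measure.
\item Localizing to $K\Subset X^\circ\setminus\pi^{-1}(\{\psi=-\infty\})$ does not help. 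The polar set of a quasi-psh function is a $G_\delta$ that can be dense, so this set need not contain any open set. Even where $\psi$ is finite, it need not be bounded or differentiable.
\item Your derivation that $\psi$ is smooth presupposes $\nabla^\psi s_\pi=0$, which is exactly what the faulty limit was meant to deliver.
\end{itemize}

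The missing idea is to extract from the energy inequality only a statement that survives $L^1$/a.e.\ convergence of $\psi_j$. That statement concerns the scalar $\sqrt{u_{\ell,j}}=|s_{\widehat\pi}|_{\ell,j}$, not the section itself.
\begin{itemize}
\item The paper keeps the energy term and takes a diagonal sequence $j(\ell)$ so that the $\delta_j$-error tends to zero; your iterated limit would also do.
\item Kato's inequality then gives $\int_K|d\sqrt{u_{\ell,j(\ell)}}|^2\to0$ on every compact $K\Subset\widehat\pi^{-1}(\widehat Z^\circ)$.
\item Meanwhile $\sqrt{u_{\ell,j(\ell)}}\to e^{\psi\circ\widehat\pi/2}\sqrt{J_{\widehat\pi,\mu^*\omega}}$ in $L^2(K)$ by dominated convergence, with no derivatives of $\psi$ involved.
\item Weak closedness of $d$ gives $e^{\psi\circ\widehat\pi}J_{\widehat\pi,\mu^*\omega}=c^2$ a.e. Hence $\psi$ coincides with a smooth function on $\widehat Z^\circ$.
\end{itemize}

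Only then is the weighted Bochner identity available pointwise with the limiting weight. Since $\Delta(c^2)=0$, it reads $0=|\nabla^\psi s_{\widehat\pi}|^2+c^2\bigl(\widehat S-\tr_V\Ric(\mu^*\omega)+\tr_{\mu^*\omega}\widehat\pi^*T\bigr)$, with $\widehat S=2\pi r(r+1)/a$. On each regular fibre, $\int_{\widehat F_z}\bigl(\widehat S-\tr_V\Ric(\mu^*\omega)\bigr)(\mu^*\omega)_z^r=\widehat S a^r-2\pi r(r+1)a^{r-1}=0$. Integrating over the fibre therefore kills both nonnegative terms and yields $\nabla^\psi s_{\widehat\pi}=0$ and $T=0$. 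With this step supplied, the rest of your argument goes through.
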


\begin{proof}
If \(m=0\), then \(Z\) is a point and the assumption on the general fibre gives \(X\cong\PP^r\). Let
\([\omega]=aH\), where \(H=c_1\bigl(\mathcal O_{\PP^r}(1)\bigr)\). Then \eqref{eq:sharp-relation-rigidity} implies that \(S(\omega)\) is constant. Since \([\omega]\) is proportional to \(c_1(X)\), \(\omega\) is K\"ahler--Einstein. By \cite{BandoMabuchi87}, \((X,\omega)\cong (\PP^r,a\omega_{\mathrm{FS}})\).

We may therefore assume \(m>0\) in the remainder of the proof. Choose a smooth compact K\"ahler model \(\nu:\widehat Z\to Z\) and a graph resolution
\[
        \begin{array}{ccc}
        \widehat X & \xrightarrow{\widehat\pi} & \widehat Z \\
        \mu\downarrow & & \downarrow\nu \\
        X & \dashrightarrow & Z.
        \end{array}
\]
By \cite[Theorem~1.10]{BierstoneMilman97}, we may take \(\mu\) to be a finite composition of blow-ups along smooth centres.  After shrinking to a connected Zariski open subset \(\widehat Z^\circ\subset\widehat Z\), the map \(\widehat\pi\) is a proper holomorphic submersion, \(\mu\) is biholomorphic on a neighbourhood of \(\widehat\pi^{-1}(\widehat Z^\circ)\), and \(\widehat F_z\cong\PP^r\), for \(z\in\widehat Z^\circ\).

Normalize \(H=c_1(\mathcal O_{\PP^r}(1))\) by \(H^r=1\).  The K\"ahler class of the fibres is locally constant in this smooth family, and every line in \(\PP^r\) is Poincar\'e dual to \(H^{r-1}\).  Hence for \(z\in\widehat Z^\circ\), we have \([\mu^*\omega|_{\widehat F_z}]=aH\).

By Theorem~\ref{thm:iterated-approximation}, there are K\"ahler metrics \(\omega_\ell\) on \(\widehat X\) such that
\begin{align*}
 \varepsilon_\ell:=\|S(\omega_\ell)-\mu^*S(\omega)\|_{C^0(\widehat X)}
 \longrightarrow0,\qquad
 \omega_\ell\longrightarrow\mu^*\omega
 \quad\text{in }C^\infty_{\mathrm{loc}}\left(\widehat X\setminus\operatorname{Exc}(\mu)\right).
\end{align*}
The classes of exceptional divisors restrict trivially to the regular fibres, hence \([\omega_\ell|_{\widehat F_z}]=aH\).

Since \(\widehat Z\) is not uniruled, Ou's characterization \cite[Theorem~1.1]{Ou2025} implies that \(K_{\widehat Z}\) is pseudo-effective.  Let
\[
        T=-\Ric(\eta)+\sqrt{-1}\partial\bar\partial\psi\ge0,
        \qquad \psi\le0,
\]
be the positive current representing \(2\pi c_1(K_{\widehat Z})\) used in Section~\ref{sec:level-set-method}. Then Proposition~\ref{prop:psef-fibrewise-scalar-final} and the fibrewise Chern--Weil identity give
\begin{align*}
 \min_{\widehat X}S(\omega_\ell)a^r \int_{\widehat Z^\circ}e^\psi\eta^m
 \le
 \int_{\widehat Z^\circ}e^\psi
 \left(\int_{\widehat F_z}S(\omega_\ell)
 \omega_{\ell,z}^r\right)\eta^m
 \le 2\pi r(r+1)a^{r-1}\int_{\widehat Z^\circ}e^\psi\eta^m.
\end{align*}
Subtracting the first term from the middle term therefore yields
\begin{align}
 &0\le\int_{\widehat Z^\circ}e^\psi
 \left[\int_{\widehat F_z}
 \bigl(S(\omega_\ell)-\min_{\widehat X}S(\omega_\ell)\bigr)
 \omega_{\ell,z}^r\right]\eta^m
 \longrightarrow0.                                      \label{eq:first-rigidity-defect}
\end{align}
Since \(e^\psi>0\) almost everywhere, the local smooth convergence in
Theorem~\ref{thm:iterated-approximation}, together with
\eqref{eq:first-rigidity-defect}, implies
\begin{equation}\label{eq:scalar-constant-rigidity}
        S(\omega)=\min_XS(\omega)
        =\frac{2\pi r(r+1)}a:=\widehat S.
\end{equation}

We next justify the equality case for the singular base metric. Choose the same regularizations as in
the proof of Proposition \ref{prop:psef-fibrewise-scalar-final}:
\[
        T_j=-\Ric(\eta)+\sqrt{-1}\partial\bar\partial\psi_j
        \ge-\delta_j\eta,
        \qquad
        \delta_j\downarrow0,
\]
where \(\psi_j\le0\) has analytic singularities and \(\psi_j\to\psi\) in \(L^1\) and almost everywhere.  Write
\[
        u_{\ell,j}
        =e^{\psi_j\circ\widehat\pi}J_{\widehat\pi,\omega_\ell},
        \qquad
        \widetilde T_j=T_j+\delta_j\eta\ge0.
\]
Define
\[
\begin{aligned}
        A_{\ell,j}
        :=
        \int_{\widehat Z^\circ}e^{\psi_j}
        \left(\int_{\widehat F_z}
        S(\omega_\ell)\,\omega_{\ell,z}^r\right)\eta^m,\qquad
        B_{\ell,j}
        :=
        \int_{\widehat Z^\circ}e^{\psi_j}
        \left(\int_{\widehat F_z}
        S(\omega_{\ell,z})\,\omega_{\ell,z}^r\right)\eta^m.
\end{aligned}
\]
Since \([\omega_\ell|_{\widehat F_z}]=aH\) and \( c_1(\widehat F_z)=(r+1)H\), the fibrewise Chern--Weil identity gives
\[
\begin{aligned}
        B_{\ell,j}
        =
        2\pi r(r+1)a^{r-1}\int_{\widehat Z^\circ}e^{\psi_j}\eta^m
        =
        \widehat S\,a^r\int_{\widehat Z^\circ}e^{\psi_j}\eta^m.
\end{aligned}
\]
On the other hand,
\[
        \int_{\widehat F_z}\omega_{\ell,z}^r=a^r,
        \qquad
        \mu^*S(\omega)\equiv\widehat S.
\]
Consequently,
\[
\begin{aligned}
        B_{\ell,j}-A_{\ell,j}
        &=
        \int_{\widehat Z^\circ}e^{\psi_j}
        \left(
        \int_{\widehat F_z}
        \bigl(\mu^*S(\omega)-S(\omega_\ell)\bigr)
        \omega_{\ell,z}^r
        \right)\eta^m.
\end{aligned}
\]
Since \(0\le e^{\psi_j}\le1\), it follows that
\begin{equation}\label{eq:uniform-AB-defect}
        |B_{\ell,j}-A_{\ell,j}|
        \le C\|S(\omega_\ell)-\mu^*S(\omega)\|_{C^0(\widehat X)},
\end{equation}
where \(C\) is independent of \(j\). 

By the same argument used to derive \eqref{eq:pre-coarea-weighted-final}, and by \eqref{eq:coarea-final} and \eqref{eq:vertical-trace-fibre-scalar-final}, we then have
\begin{align}
 &\frac12\int_{\widehat X}
 |\nabla^{\ell,j}s_{\widehat\pi}|^2\,\omega_\ell^{m+r}
 +\int_{\widehat X}u_{\ell,j}
 \tr_{\omega_\ell}\widehat\pi^*\widetilde T_j\,
 \omega_\ell^{m+r} \notag\\
 &\qquad\le
 \frac{(m+r)!}{m!r!}(B_{\ell,j}-A_{\ell,j})
 +\delta_j\int_{\widehat X}u_{\ell,j}
 \tr_{\omega_\ell}\widehat\pi^*\eta\,\omega_\ell^{m+r}.
                                                        \label{eq:rigidity-energy-estimate}
\end{align}
Since \(0\le u_{\ell,j}\le J_{\widehat\pi,\omega_\ell}\), \eqref{eq:rigidity-energy-estimate} and \eqref{eq:uniform-AB-defect} imply
\[
\begin{aligned}
 0\le{}&
 \frac12\int_{\widehat X}
 |\nabla^{\ell,j}s_{\widehat\pi}|^2\,\omega_\ell^{m+r}
 +\int_{\widehat X}u_{\ell,j}
 \tr_{\omega_\ell}\widehat\pi^*\widetilde T_j\,
 \omega_\ell^{m+r}\\
 \le{}&
 C\varepsilon_\ell+\delta_j \int_{\widehat X}
J_{\widehat\pi,\omega_\ell}
\tr_{\omega_\ell}\widehat\pi^*\eta\,\omega_\ell^{m+r} \\
={}&C\varepsilon_\ell+\delta_jC_\ell.
\end{aligned}
\]
We now choose a diagonal sequence.  For each \(\ell\), the number \(C_\ell\) is finite, while \(\delta_j\to0\) as \(j\to\infty\). Hence we may choose \(j(\ell)\ge\ell\) such that \(\delta_{j(\ell)}C_\ell\le\ell^{-1}\). Consequently,
\begin{align}
 &\int_{\widehat X}
 |\nabla^{\ell,j(\ell)}s_{\widehat\pi}|^2\,\omega_\ell^{m+r}
 +\int_{\widehat X}u_{\ell,j(\ell)}
 \tr_{\omega_\ell}\widehat\pi^*
 \widetilde T_{j(\ell)}\,\omega_\ell^{m+r}
 \longrightarrow0.                                    \label{eq:energy-to-zero}
\end{align}

Let \(K\Subset\widehat\pi^{-1}(\widehat Z^\circ)\). Since both terms in \eqref{eq:energy-to-zero} are nonnegative and \(\sqrt{u_{\ell,j}}=|s_{\widehat\pi}|_{\ell,j}\), by Kato's inequality, we have
\[
       \int_K
        \left|d\sqrt{u_{\ell,j(\ell)}}\right|_{\omega_\ell}^2
        \omega_\ell^{m+r}\le \int_{\widehat X}
        |\nabla^{\ell,j(\ell)}s_{\widehat\pi}|^2
        \,\omega_\ell^{m+r}
        \longrightarrow0.
\]
By Theorem~\ref{thm:iterated-approximation}, \(\omega_\ell\to\mu^*\omega\) in \(C^\infty(K)\).  Hence the corresponding norms and volume forms are uniformly equivalent on \(K\).  Consequently,
\begin{equation}\label{eq:Kato-local-rigidity}
        \int_K
        \left|d\sqrt{u_{\ell,j(\ell)}}\right|_{\mu^*\omega}^2
        (\mu^*\omega)^{m+r}
        \longrightarrow0.
\end{equation}

Since \(\widehat\pi\) is fixed, the same local smooth convergence implies
\[
        J_{\widehat\pi,\omega_\ell}
        \longrightarrow J_{\widehat\pi,\mu^*\omega}
        \quad\text{in }C^\infty(K).
\]
Moreover, \(j(\ell)\to\infty\), so \(e^{\psi_{j(\ell)}\circ\widehat\pi/2}\to e^{\psi\circ\widehat\pi/2}\) almost everywhere on \(K\). Because \(0\le e^{\psi_j/2}\le1\), dominated convergence yields
\[
        \sqrt{u_{\ell,j(\ell)}}
        \longrightarrow
        e^{\psi\circ\widehat\pi/2}
        \sqrt{J_{\widehat\pi,\mu^*\omega}}
        \quad\text{in }L^2(K).
\]
Together with \eqref{eq:Kato-local-rigidity}, the weak closedness of the
distributional differential shows that
\[
        d\left(
        e^{\psi\circ\widehat\pi/2}
        \sqrt{J_{\widehat\pi,\mu^*\omega}}
        \right)=0
\]
weakly on \(\widehat\pi^{-1}(\widehat Z^\circ)\).  This set is connected,
because \(\widehat Z^\circ\) is connected and the fibres are connected.
Consequently, for some constant \(c>0\),
\begin{equation}\label{eq:constant-weighted-Jacobian}
        e^{\psi\circ\widehat\pi}
        J_{\widehat\pi,\mu^*\omega}=c^2
        \quad\text{almost everywhere on }
        \widehat\pi^{-1}(\widehat Z^\circ).
\end{equation}
Here \(c>0\) because \(\psi\) is finite almost everywhere and \(J_{\widehat\pi,\mu^*\omega}>0\) on the regular locus.  Thus \(\psi\circ\widehat\pi=\log c^2-\log J_{\widehat\pi,\mu^*\omega}\) almost everywhere.
Since \(\log c^2-\log J_{\widehat\pi,\mu^*\omega}\) is smooth, we conclude that
\[
        T=-\Ric(\eta)+\sqrt{-1}\partial\bar\partial\psi
\]
is a smooth semipositive form on \(\widehat Z^\circ\), and \eqref{eq:constant-weighted-Jacobian} holds there pointwise.

By \eqref{eq:weighted-bochner-scalar-final}, \eqref{eq:scalar-constant-rigidity} and \eqref{eq:constant-weighted-Jacobian}, we obtain
\begin{equation}\label{eq:limiting-smooth-Bochner}
        0=|\nabla^\psi s_{\widehat\pi}|^2
        +c^2\left(
        \widehat S-\tr_V\Ric(\mu^*\omega)
        +\tr_{\mu^*\omega}\widehat\pi^*T
        \right)
\end{equation}
on \(\widehat\pi^{-1}(\widehat Z^\circ)\).  On each regular fibre,
\[
\begin{aligned}
 \int_{\widehat F_z}
 \bigl(\widehat S-\tr_V\Ric(\mu^*\omega)\bigr)
 (\mu^*\omega)_z^r
 =
 \widehat S a^r-2\pi r(r+1)a^{r-1}=0.
\end{aligned}
\]
Integrating \eqref{eq:limiting-smooth-Bochner} over \(\widehat F_z\) therefore gives 
\begin{equation}\label{eq:parallel-and-flat-base}
        \nabla^\psi s_{\widehat\pi}=0
        \quad\text{on }
        \widehat\pi^{-1}(\widehat Z^\circ),
        \qquad
        T=0
        \quad\text{on }\widehat Z^\circ.
\end{equation}

Let \(X^\circ:=\mu\bigl(\widehat\pi^{-1}(\widehat Z^\circ)\bigr)\subset X\). The map \(\mu\) identifies \(\widehat\pi^{-1}(\widehat Z^\circ)\) with the Zariski open subset \(X^\circ\). Under this identification, define \(V_x^\circ:=\left\{v\in T_x^{1,0}X:\iota_v s_{\widehat\pi}=0\right\}\). Since \(s_{\widehat\pi}\) is decomposable and nonzero on the regular locus, \(V^\circ\) is a rank-\(r\) holomorphic subbundle.  Moreover, \(\nabla^\psi s_{\widehat\pi}=0\) implies that \(V^\circ\) is parallel. Let \(P^\circ: T^{1,0}X|_{X^\circ}\rightarrow V^\circ\) be the \(\omega\)-orthogonal projection.  Since \(V^\circ\) is parallel, \(P^\circ\) satisfies
\[
        (P^\circ)^2=P^\circ,
        \qquad
        (P^\circ)^{*\omega}=P^\circ,
        \qquad
        \nabla^\omega P^\circ=0.
\]
In particular, \(P^\circ\) is holomorphic and has operator norm one. Hence the coefficients of \(P^\circ\) in every local holomorphic frame are locally bounded near the analytic subset \(X\setminus X^\circ\). The Riemann extension theorem gives a unique holomorphic endomorphism \(P\in H^0\bigl(X,\operatorname{End}(T^{1,0}X)\bigr)\) extending \(P^\circ\). In particular, \(P\) has constant rank \(r\). Consequently,
\[
        T^{1,0}X
        =
        \operatorname{Im}P\oplus\ker P
\]
is an orthogonal decomposition into parallel holomorphic subbundles. The de Rham theorem then gives the holomorphic isometric splitting \eqref{eq:common-rigidity-product} on the universal cover. The vertical factor is \(\PP^r\).  By \eqref{eq:limiting-smooth-Bochner} and \eqref{eq:parallel-and-flat-base}, its induced metric has constant scalar curvature in the class \(aH\).  Since \(c_1(\PP^r)=(r+1)H\), this metric is K\"ahler--Einstein; the Bando--Mabuchi uniqueness theorem \cite{BandoMabuchi87} identifies it with \(a\omega_{FS}\) up to an automorphism.  Finally, on the product, \eqref{eq:constant-weighted-Jacobian} identifies the Ricci form of the horizontal factor with \(-T=0\).  Thus its induced metric \(\omega_{RF}\) is Ricci-flat.
\end{proof}

\bibliographystyle{alpha}
\bibliography{wpref}

\bigskip

\noindent
\textsc{Zehao Sha}\\
Institute for Mathematics and Fundamental Physics, Hefei, 230088, China\\
\textit{Email address:} \texttt{zhsha@imfp.org.cn}

\bigskip

\noindent
\textsc{Jian Wang}\\
State Key Laboratory of Mathematical Sciences,
Academy of Mathematics and Systems Science, Chinese Academy of Sciences,
Beijing 100190, China\\
\textit{Email address:} \texttt{jian.wang.4@amss.ac.cn}

\end{document}